\newtheorem{thm}{Theorem}[section]
\newtheorem{defn}[thm]{Definition}
\newtheorem{lemma}[thm]{Lemma}
\newtheorem{cor}[thm]{Corollary}
\newtheorem{remark}[thm]{Remark}
\newtheorem{example}[thm]{Example}
\newcommand{\lo}{{\ell}}
\newcommand{\sh}{{s}}
\newcommand{\cQ}{\mathcal Q}
\newcommand{\cA}{\mathcal A}
\newcommand{\cG}{\mathcal G}
\newcommand{\cT}{\mathcal T}
\newcommand{\tcT}{\widetilde{\mathcal T}}
\newcommand{\g}{{\mathfrak{g}}}
\renewcommand{\sl}{{\mathfrak{sl}}}
\newcommand{\KR}{{{\rm KR}}}
\newcommand{\C}{{\mathbb C}}
\newcommand{\Z}{{\mathbb Z}}
\newcommand{\ba}{{\mathbf a}}
\newcommand{\bm}{{\mathbf m}}
\newcommand{\bn}{{\mathbf n}}
\newcommand{\bb}{{\mathbf b}}
\newcommand{\bx}{{\mathbf x}}
\newcommand{\bu}{{\mathbf u}}
\newcommand{\al}{{\alpha}}
\newcommand{\tQ}{{R}}
\newcommand{\sgn}{{\rm sgn}}
\numberwithin{equation}{section}
\begin{document}
\title[$Q$-systems as cluster algebras II]{$Q$-systems as cluster
  algebras II: Cartan matrix of finite type
  and the polynomial property}
\author{P. Di Francesco and R. Kedem}
\address{PDF: Institut de Physique Th\'eorique de Saclay, CEA/DSM/IPhT,
  URA 2306 du CNRS, F-91191 Gif sur Yvette Cedex France.
  E-mail:  philippe.di-francesco@cea.fr}
\address{RK: Department of Mathematics, University of Illinois, 1409 West
  Green Street, Urbana, IL 61821. e-mail: rinat@uiuc.edu}

\date{\today}
\begin{abstract}
  We define the cluster algebra associated with the $Q$-system for the
  Kirillov-Reshetikhin characters of the quantum affine algebra
  $U_q(\widehat{\g})$ for any simple Lie algebra $\g$, generalizing
  the simply-laced case treated in [Kedem 2007].  We describe some
  special properties of this cluster algebra, and explain its relation
  to the deformed $Q$-systems which appeared on our proof of the
  combinatorial-KR conjecture. We prove that the polynomiality of the
  cluster variables in terms of the ``initial cluster seeds'',
  including solutions of the $Q$-system, is a consequence of the
  Laurent phenomenon and the boundary conditions. We also give a
  formulation of both $Q$-systems and generalized $T$-systems as
  cluster algebras with coefficients. This provides a proof of the
  polynomiality of solutions of generalized $T$-systems with
  appropriate boundary conditions.
\end{abstract}
\maketitle
\section{Introduction}
The $Q$-system is a recursion relation for the characters of certain
finite-dimensional representations of the quantum affine algebra
$U_q(\widehat\g)$ or the Yangian $Y(\g)$, where $\g$ is a simple Lie
algebra. 

$Q$-systems were introduced in \cite{KR} for the classical algebras.
They were later generalized by \cite{HKOTY} for the exceptional
algebras and later to more complicated situations, such as twisted
quantum affine algebras \cite{HKOTY2} and double affine algebras
\cite{Her07}. 

The special modules related to $Q$-systems are called \cite{KunibaYsys,KN}
Kirillov-Reshetikhin modules. The fact that their characters
satisfy the $Q$-system was proved by Kirillov and Reshetikhin
\cite{KR} for $\g=A_r$, by Nakajima \cite{Nakajima} for the
simply-laced algebras and by Hernandez \cite{Hernandez,Her07} in further
generality.

Cluster algebras were introduced by Fomin and Zelevinsky
\cite{FZcluster} in 2000, and are a very general algebraic tool which
has since been applied in various algebraic, combinatorial and
geometric contexts. In particular, they have been used to study
$Y$-systems \cite{FZysys}, which are related to $Q$-systems in the
sense that both can be derived from $T$-systems \cite{KunibaYsys}. The
$T$-systems are a consequence of the fusion relations for Yangian or
quantum algebra modules.

The form of the $Q$-system suggests that it should be possible to
recast it as part of a cluster algebra. The first step in this
reformulation, for the case where $\g$ is simply-laced, was derived in
\cite{Ke07}. In the current
article, we give a generalization of this case to non-simply laced
$\g$.  In this formulation, the $Q$-system appears in a very simple
and easily generalizable form. We note that it does not appear to be
directly related to the cluster algebra coming from the $Y$-system
studied by Fomin and Zelevinsky \cite{FZysys}.

In \cite{DFK}, we proved a combinatorial identity (``the $M=N$''
conjecture of \cite{HKOTY}) which implies the proof of the
combinatorial Kirillov-Reshetikhin conjecture. In our proof, we
introduced what we called the deformed $Q$-system, depending on an
increasing number of formal variables. A specialization of this system
can be expressed as the $Q$-system with general boundary conditions
(not corresponding to Kirillov-Reshetikhin characters), or as a
cluster algebra. It is essential for the formulation as a cluster
algebra to have this more general system.

Our proof of the $M=N$ identity depends crucially on the fact that
the KR-characters are polynomials in the fundamental KR-characters.
This can be rephrased in terms of the cluster algebra, by considering
a specialization of the initial cluster variables to the special point
which gives the KR-characters at the other nodes of the cluster graph.
We call this specialization of the initial parameters the KR point.

On the other hand, it is known that cluster variables obey the Laurent
phenomenon \cite{FZlaurent}.  We show that, under the specialization
of the cluster variables to the KR point, this becomes what we called
in \cite{Ke07} the strong Laurent phenomenon. That is, the cluster
variables on the entire cluster graph, not just the subgraph
corresponding to the $Q$-system, are polynomials in the initial
cluster variables.

The paper is organized as follows. We recall the definition of
$Q$-systems for any simple Lie algebra in Section 2, as well as the
definition of normalized cluster algebras without coefficients.
Section 3 deals with the formulation of each $Q$-system as a subgraph
in a cluster algebra. In Section 3.1, we review the results of
\cite{Ke07} about the formulation of $Q$-systems as cluster algebras
for simply-laced Lie algebras.  In Sections 3.2 and 3.3, we formulate
the cluster algebras corresponding to the non-simply laced simple Lie
algebras. In Section 4, we prove that the special boundary conditions
which give solutions of the $Q$-system as characters of
Kirillov-Reshetikhin modules imply the polynomiality of the cluster
variables as functions of the seed variables at the boundary node.
Section 5 is a discussion of the results. Appendix A is a
reformulation of the $Q$-system as a cluster algebra with
coefficients, addressing the technical point of subtraction-free
expressions. In the body of the paper, this is done through a
renormalization of the cluster variables, which is not always
generalizable.
Appendix B is a discussion of the formulation of generalized
$T$-systems as cluster algebras, with two main examples, both of which
have the polynomiality property.

\vskip.2in
\noindent{\bf Acknowledgements.} We thank Bernhard
Keller, Hugh 
Thomas and especially Sergei Fomin for their valuable input. RK thanks 
CEA-Saclay IPhT for their hospitality. We thank the organizers of the MSRI
program on ``Combinatorial Representation Theory'' for their
hospitality. RK is supported by NSF grant
DMS-05-00759. PDF acknowledges the support of European Marie Curie
Research Training Networks ENIGMA MRT-CT-2004-5652,  ENRAGE
MRTN-CT-2004-005616, ESF program MISGAM, and of ANR
program GIMP ANR-05-BLAN-0029-01.

\section{Definitions}

\subsection{The $Q$-system}
\subsubsection{$\KR$-modules}
Let $\g$ be a simple Lie algebra of rank $r$ and let
$I_r=\{1,...,r\}$ be the parameterizing set for the simple roots of
$\g$. Let $C$ be the Cartan matrix of $\g$.

For any such algebra, there is a corresponding quantum affine algebra
$U_q(\widehat{\g})$ and a Yangian algebra $Y(\g)$. In the study of the
finite-dimensional modules of these algebras, there are certain
special modules, called Kirillov-Reshetikhin (KR) modules \cite{KR}.
These can be defined in terms of their Drinfeld polynomials
\cite{ChariPres}, for example, which have a particularly simple form
for these modules.

The modules are parameterized by $\g$-highest weights of the form $k
\omega_\al$, that is, a multiple of one of the fundamental weights, as
well as a complex parameter $\zeta$. We refer to such a module by
$W_{\al,k}(\zeta)$ where $\al\in I_r$, $k\in \Z_+$ and $\zeta\in
\C^*$.  These are finite-dimensional modules, which are analogs of
evaluation modules of affine algebras, which are obtained from
inducing from the irreducible representation of the finite-dimensional
subalgebra $\g$, which does not depend on $\zeta$. Unlike such
evaluation modules, however, they are not necessarily irreducible when
restricted to the subalgebra $U_q(\g)\subset U_q(\widehat{\g})$ or
$\g\in Y(\g)$, except in special cases, such as when $\g\neq
A_r$. The idea is that $W_{\al,k}(\zeta)$ is the ``smallest''
finite-dimensional $\g[t]$-module, with largest component $V(k\omega_\al)$ (the
irreducible $\g$-module), which deforms to a Yangian module.

KR-modules have been extensively studied in recent years, and
much is known about their properties. In particular, their
decomposition, and the decomposition of their tensor products,
into irreducible $U_q(\g)$-modules is known in terms of explicit
multiplicity formulas.
These formulas can be derived \cite{KR,HKOTY,Hernandez,DFK}
from the $Q$-system, a recursion relation for the characters
$$Q_{\al,k}={\rm char}~ {\rm Res}^{U_q(\widehat{\g})}_{U_q(\g)}
W_{\al,k}(\zeta),$$
in addition to a certain asymptotic property (see (C) of Theorem 8.1
of \cite{HKOTY}). We leave out the parameter $\zeta$ in what follows,
because it only affects the action of the affine part of the algebra
and not $U_q(\g)$ or the characters $Q_{\al,k}$.

The formulas for the decomposition coefficients and the statement that
the characters of $KR$-modules satisfy the $Q$-system were both known
(until they were proven) as the Kirillov-Reshetikhin conjecture.

\subsubsection{$Q$-systems}
We introduce the $Q$-system in two steps. First, let us write down
a completely general relation for a family of variables 
$$\{Q_{\al,k}| ~ \al\in I_r, k\in \Z\}.$$
Let $\g$ be a simple Lie algebra with Cartan matrix $C$. We denote the
simple roots $\al$ by the corresponding integers in $I_r=\{1,...,r\}$. The
$Q$-system associated with $\g$ is a recursion relation of the form
\begin{equation}\label{Qsystem}
  Q_{\al,k+1} Q_{\al,k-1} = Q_{\al,k}^2 - \prod_{\beta\sim\al}
  \cT^{(\al,\beta)}_k, ~\al\in I_r, ~k\in \Z,
\end{equation}
where $\al\sim\beta$ means that $\al$ is connected to $\beta$ in the
Dynkin diagram, and
\begin{equation}\label{T}
\cT^{(\al,\beta)}_k = \prod_{i=0}^{|C_{\al,\beta}|-1}
Q_{\beta,\lfloor\frac{t_\beta k+i}{t_\al} \rfloor},
\end{equation}
where $\lfloor a \rfloor$ is the integer part of $a$.
Here, $t_\al$ are the integers which symmetrize the Cartan
matrix. Namely, $t_r=2$ for $B_r$, $t_\al=2$ ($\al<r$) for $C_r$,
$t_3=t_4=2$ for $F_4$ and $t_2=3$ for $G_2$, and $t_\al=1$ otherwise.

This is a recursion relation which can be used to express
any $Q_{\al,k}$ in terms of $2r$ initial data points. Usually, one
chooses the initial conditions $\{Q_{\al,0},
Q_{\al,1}~|~\al\in I_r\}$. 
Then
\begin{equation}\label{positive}
Q_{\al,k+1} = \frac{Q_{\al,k}^2 - \prod_{\beta\sim \al}
  \cT_k^{\al,\beta}}{Q_{\al,k-1}}, \ k\geq 1,
\end{equation}
and
\begin{equation}\label{negative}
Q_{\al,k-1}=\frac{Q_{\al,k}^2 - \prod_{\beta\sim \al}
  \cT_k^{\al,\beta}}{Q_{\al,k+1}}, \ k\leq0.
\end{equation}

Consider Equation \eqref{positive} in the particular case of the initial
conditions that for all $\al$, $Q_{\al,0}=1$ and $Q_{\al,1}={\rm Char}
W_{\al,1}$, the character of 
the fundamental Kirillov-Reshetikhin module, with
highest $\g$-weight $\omega_\al$. In this case, the
solutions $Q_{\al,k}$ of \eqref{positive} are known to be the
characters of KR-modules $W_{\al,k}$ with highest $\g$-weight $k
\omega_\al$ \cite{Nakajima,Hernandez}. 
The recursion relation \eqref{positive} with these initial conditions
is what is usually called the $Q$-system in the literature.

We do not impose these initial conditions in the definition of
the cluster algebra. As we will see, this is a special (singular)
point in the parameter space for this algebra. One way to see that it
is singular is that equation \eqref{negative} cannot be used, in this
case, to define all the variables $Q_{\al,k}$ with $k<0$ because, for
example, $Q_{\al,-1}=0$ in this case.

More general initial conditions, of the form
\begin{equation}\label{deformedinitial}
Q_{\al,0}=b_\al, \ Q_{\al,1}=a_\al
\end{equation}
for some formal variables $a_\al, b_\al$ give a solution of (a special
case of) the ``deformed Q-system'' introduced in \cite{DFK}. We will
explain this point in Section \ref{discussion}.

\subsubsection{Normalized $Q$-systems}
In this paper we deal with cluster algebras without coefficients. To
see that it is possible to reformulate the $Q$-system in this way, 
a version of the following Lemma was given in \cite{Ke07}:
\begin{lemma}\label{renormalize}
There exist complex numbers $\epsilon_1,...,\epsilon_r$ (all of them roots of
1) such that the variables $\tQ_{\al,i}=\epsilon_\al Q_{\al,i}$
satisfy the normalized $Q$-system
\begin{equation}\label{tQsystem}
\tQ_{\al,k+1} \tQ_{\al,k-1} = \tQ_{\al,k}^2 + \prod_{\beta\sim\al}
\widetilde{\cT}^{(\al,\beta)}_k, 
\end{equation}
where
$$
\widetilde{\cT}^{(\al,\beta)}_k = \prod_{i=0}^{|C_{\al,\beta}|-1}
\tQ_{\beta,\lfloor\frac{t_\beta(k+i)}{t_\al} \rfloor}.
$$
\end{lemma}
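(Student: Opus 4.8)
The plan is to take the rescaling in the evident form $\tQ_{\al,i}=\epsilon_\al Q_{\al,i}$, with constants $\epsilon_\al\in\C^*$ that depend only on the node $\al$ and not on $i$, to substitute this into \eqref{tQsystem}, and to check that the resulting identities are a consequence of \eqref{Qsystem} precisely when the $\epsilon_\al$ satisfy a small closed system of multiplicative relations. The remaining, and main, task is then to solve that system with roots of unity; this is where the hypothesis that $\g$ is of finite type enters.

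Carrying out the substitution, the two quadratic monomials $\tQ_{\al,k+1}\tQ_{\al,k-1}$ and $\tQ_{\al,k}^{2}$ in \eqref{tQsystem} both scale by $\epsilon_\al^{2}$, so after dividing by $\epsilon_\al^{2}$ the $\al$-th equation of \eqref{tQsystem} reads $Q_{\al,k+1}Q_{\al,k-1}=Q_{\al,k}^{2}+\epsilon_\al^{-2}\prod_{\beta\sim\al}\widetilde{\cT}^{(\al,\beta)}_{k}$, where now $\widetilde{\cT}^{(\al,\beta)}_{k}$ is written in the $Q$'s. A short edge-by-edge bookkeeping step — treating the simply-laced edges, and then the double and triple edges occurring in $B_{r}$, $C_{r}$, $F_{4}$, $G_{2}$ — shows that the arguments $\lfloor t_\beta(k+i)/t_\al\rfloor$ of $\widetilde{\cT}^{(\al,\beta)}_{k}$ and $\lfloor(t_\beta k+i)/t_\al\rfloor$ of $\cT^{(\al,\beta)}_{k}$ run over the same multiset of indices, so that $\widetilde{\cT}^{(\al,\beta)}_{k}=\epsilon_\beta^{\,|C_{\al,\beta}|}\,\cT^{(\al,\beta)}_{k}$. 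Comparing with \eqref{Qsystem} and cancelling the common factor $\prod_{\beta\sim\al}\cT^{(\al,\beta)}_{k}$, we find that $\tQ$ satisfies \eqref{tQsystem} if and only if, for every $\al\in I_{r}$,
\begin{equation*}
\epsilon_\al^{-2}\prod_{\beta\sim\al}\epsilon_\beta^{\,|C_{\al,\beta}|}=-1,
\qquad\text{i.e.}\qquad
\prod_{\beta\in I_{r}}\epsilon_\beta^{-C_{\al,\beta}}=-1 ,
\end{equation*}
where the last form uses $C_{\al,\al}=2$ and $C_{\al,\beta}=-|C_{\al,\beta}|$ for $\beta\sim\al$.

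It remains to solve this multiplicative system with roots of unity. Writing $\epsilon_\al=e^{\pi i y_\al}$ turns the relations into the single linear congruence $Cy\equiv\mathbf 1\pmod{2\Z^{r}}$. Since $\g$ is of finite type the Cartan matrix $C$ is invertible, so $y=C^{-1}\mathbf 1$ is a solution; as $C$ is an integer matrix with $\det C\neq 0$, the vector $C^{-1}\mathbf 1$ has rational entries, and hence each $\epsilon_\al=e^{\pi i\,(C^{-1}\mathbf 1)_\al}$ is a root of unity (of order dividing $2\det C$). Because these constants are independent of the second index, the same $\epsilon_\al$ simultaneously intertwine the forward and backward unfolding rules \eqref{positive}--\eqref{negative}. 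The heart of the proof is exactly this last step — solvability of $Cy=\mathbf 1$ is where nonsingularity of $C$, i.e.\ finite type, is used, and rationality of $C^{-1}$ is what makes the $\epsilon_\al$ roots of unity — while the only genuinely fiddly point elsewhere is the floor-function bookkeeping identifying $\widetilde{\cT}^{(\al,\beta)}_{k}$ with the rescaled $\cT^{(\al,\beta)}_{k}$ on the multiply-laced edges.
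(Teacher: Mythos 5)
Your proposal is correct and follows essentially the same route as the paper: exploit the homogeneity of degree $|C_{\al,\beta}|$ of $\cT^{(\al,\beta)}_k$ in the $Q_{\beta,\cdot}$, reduce to the multiplicative condition $\prod_\beta\epsilon_\beta^{C_{\al,\beta}}=-1$, and solve it by writing $\epsilon_\al=e^{i\pi\mu_\al}$ and inverting the (finite-type, hence nonsingular) Cartan matrix, with rationality of $C^{-1}$ giving roots of unity. Your explicit check that the index multisets of $\cT^{(\al,\beta)}_k$ and $\widetilde{\cT}^{(\al,\beta)}_k$ coincide on the multiply-laced edges is a detail the paper passes over implicitly, but the argument is the same.
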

\begin{proof}
  From Equation \eqref{T} we note that the total degree of
  $Q_{\beta,j'}$ (for various $j'$) in $\cT^{(\al,\beta)}_k$ is always
  $|C_{\al,\beta}|$, where $C$ is the Cartan matrix of $\g$. Therefore
  $\tQ_{\al,k}=\epsilon_\al Q_{\al,k}$
  satisfy the following system:
$$
\tQ_{\al,k+1}\tQ_{\al,k-1}=\tQ_{\al,k}^2 - (\prod_{\beta}
\epsilon_\beta^{C_{\al,\beta}})\prod_{\beta\sim\al}
\widetilde{\cT}^{(\al,\beta)}_k,
$$
where ${\tcT}$ is just $\cT$ with the variables $Q$ replaced
by $\tQ$.

Thus, if the variables $\epsilon_\al$ satisfy
$$
\prod_{\beta=1}^r \epsilon_\beta^{C_{\al,\beta}} = -1
$$
then \eqref{tQsystem} is satisfied.
But since $C$ is a Cartan matrix of finite type, it is invertible, and
this system has a solution. Namely if we set $\epsilon_\beta = e^{i
  \pi \mu_\beta}$, then $\mu_\al = \sum_\beta C^{-1}_{\beta,\al}$.
\end{proof}

In fact, we find $\epsilon_\beta^4=1$ in general. We note that this
choice of normalization is done in order to make contact with cluster
algebras without coefficients below. We do not believe that it is
essential for the final result.

We will refer to \eqref{tQsystem} as the normalized $Q$-system or
simply the $Q$-system when it is clear that we are considering the
evolution of the variables $\tQ$.

\subsection{Cluster algebras}

We use here the definition of \cite{FZfour} to define a cluster
algebra without coefficients.

\begin{defn}{A cluster algebra.}
Fix $n$ and define a labeled $n$-ary tree $\mathbb T_n$, with nodes
labeled by some parameter $t$ and $n$ edges labeled $1,...,n$
emanating from each node.

To each node $t$ we associate a seed $(\bx, B)_t$, where
$\bx = (x_1,...,x_n)$ (the cluster variables) and $B$ (the
mutation matrix) is a skew-symmetric $n\times n$
matrix with integer coefficients.

If the node $t$ is connected to the node $t'$ by an edge labeled by
$k$, then $(\bx[t'],B[t'])=\mu_k(\bx[t],B[t])$ where
we have the following relation (mutations or evolution equations along
the edge $k$) between the cluster at $t$ and $t'$:
\begin{eqnarray}
\mu_k(x_i) &=& \left\{ \begin{array}{ll} x_i, & i\neq k; \\
x_i^{-1}(\prod_j x_j^{[B_{jk}]_+} + \prod_{j} x_j^{[-B_{jk}]_+}). &
i=k.\end{array}\right.\\
\mu_k(B_{ij}) &=& \left\{ \begin{array}{ll}
-B_{ij}, & \hbox{if $i=k$ or $j=k$}; \\
B_{ij} + {\rm sgn}(B_{ik})[B_{ik}B_{kj}]_+, &
\hbox{otherwise}.\end{array}\label{Bmutations}\right. 
\end{eqnarray}
where $[n]_+$ denotes the positive part of $n$ and 
$$
\sgn(n)=\left\{ \begin{array}{ll} 0, & n=0;\\
1, & n>0; \\
-1, & n<0.\end{array}\right.
$$
\end{defn}

One can check the following properties of cluster mutations:
\begin{itemize}
\item The mutations \eqref{Bmutations} preserve the skew-symmetry of
$B$. 
\item If $B_{ij}=0$ then the mutations $\mu_i$ and $\mu_j$
commute. 
\item If $B_{ij}=0$ and
$k,l$ are two integers distinct from both $i$ and $j$, then
\begin{equation}\label{commutingcompound}
\mu_i\circ \mu_j( B_{kl}) = B_{kl} + \sgn(B_{ki})[B_{ki}B_{il}]_+
+ \sgn(B_{kj})[B_{kj}B_{jl}]_+.
\end{equation}
This statement can be extended to any finite sequence of commuting
mutations.
\item Mutations are defined in such a way that
$\mu_i^2=1$. Thus, cluster graphs are not oriented.
\end{itemize}

\begin{defn}
We define $\cA$ to be the family of variables
$\{\tQ_{\al,i}\}_{i\in \Z,\al\in I_r}$ which satisfy the normalized
$Q$-system \eqref{tQsystem}.
\end{defn}

In what follows we are interested in subgraphs of
$\mathbb T_n$ which encode the $Q$-system, that is, the cluster
variables at the nodes are elements of $\cA$ and the mutations between
nodes are $Q$-system evolutions.

More precisely, we will define the quotient graphs $\cG_\g$, by
identifying nodes corresponding to the same cluster variables and
mutation matrices. These are infinite graphs, but they have certain
periodicity properties and are therefore easy to describe
explicitly.

Our starting goal is to find a graph $\cG_\g$ for each algebra,
and the corresponding cluster variables, which encode the full
$Q$-system, in the following sense: The union over all nodes of
$\cG_\g$ of the cluster variables is the full set $\cA$.

Of course once the seed $(\bx,B)$ is given at any node $t$, then the
cluster algebra is defined, that is, the cluster at any other node is
defined via the evolution equations. Therefore, our definition gives a
cluster algebra on the full tree $\mathbb T_n$. We will have something
to say about special properties of the cluster variables on $\mathbb
T_n$ in Section \ref{poly}, but thus far they have no
representation-theoretical interpretation to our knowledge.

This definition of $\cG_\g$ allows us to encode the $Q$-system inside
a cluster algebra.  The graphs $\cG_\g$ which we describe
are usually subgraphs of a larger component of $\mathbb T_n$ with the
same property, and therefore our definition is not unique. There may
be other nodes of $\mathbb T_n$ with cluster variables in $\cA$ which
are not in $\cG_\g$.  This happens if the rank of $\g$ is greater than
two. We will give the example of $A_3$ to make this point clearer in
the discussion.

\section{The $Q$-systems as cluster algebras for simple
  $\g$}\label{qascluster} 

Let $C$ be a Cartan matrix of a simple Lie algebra $\g$ of rank $r$,
and let $B$ be the $2r \times 2r$
matrix defined in the block form:
\begin{equation}\label{Bmatrix}
B = \left(\begin{array}{cc} A & -C^t\\   C & 0 \end{array}\right),
\end{equation}
where $A=C^t-C$. This is a skew-symmetric matrix of integers.

For each $k\in \Z$, define
$\bx[k] = (x_1,...,x_{2r})$, where
\begin{equation}\label{clustervariable}
x_\al = \left\{\begin{array}{ll} \tQ_{\al, 2 t_\al k}, & \al\leq r \\
\tQ_{\al-r,2 t_\al k + 1}, & r<\al\leq 2r \end{array}\right.
\end{equation}
Here, $t_\al=2$ for the short roots of $B_r, ~C_r$ and $F_4$,
$t_\al=3$ for the short root of $G_2$, and and $t_\al=1$ otherwise.

In our cluster algebra graph, nodes labeled by $k\in \Z$ have associated
with them cluster $(\bx[k], B)$. We will prove the following
theorem: 

\begin{thm}
  There exists a cluster graph $\cG_\g$, which includes all nodes
  labeled by $k\in \Z_+$, with corresponding cluster variables $\bx[k]$
  as in \eqref{clustervariable} and $B$ as in \eqref{Bmatrix}, such
  that all mutations in the graph are $Q$-system evolutions. Moreover,
  the set $\cA$ is equal to the union of the cluster variables over
  all nodes in $\cG_\g$.
\end{thm}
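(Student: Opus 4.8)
The plan is to start from the seed $(\bx[0], B)$ with $B$ as in \eqref{Bmatrix} and $\bx[0]$ as in \eqref{clustervariable}, and to identify an explicit sequence of mutations whose effect is precisely the $Q$-system recursion \eqref{tQsystem}. First I would analyze the mutation $\mu_k$ at a single node $\al \le r$ of the ``top row'' of the block matrix: using the block form $B = \left(\begin{smallmatrix} A & -C^t \\ C & 0 \end{smallmatrix}\right)$ with $A = C^t - C$, the column $B_{\cdot,\al}$ has entries coming from $A$ in the top block and from $C$ in the bottom block. One computes $\prod_j x_j^{[B_{j\al}]_+}$ and $\prod_j x_j^{[-B_{j\al}]_+}$ and checks that, after the renormalization built into \eqref{clustervariable} (the powers $t_\al$, and the shifts by the symmetrizers $t_\beta$ appearing in $\widetilde\cT$), these two monomials are exactly $\tQ_{\al,m}^2$ and $\prod_{\beta\sim\al}\widetilde\cT^{(\al,\beta)}_{\cdot}$ for the appropriate index, so that $\mu_\al(x_\al) = x_\al^{-1}(\tQ_{\al,m}^2 + \prod_{\beta\sim\al}\widetilde\cT^{(\al,\beta)}) = \tQ$ at the shifted index. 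This is the computational heart of the argument; the non-simply-laced subtlety — that the two neighbors of a long root, or the interleaving of indices $\lfloor t_\beta(k+i)/t_\al\rfloor$, must line up with the off-diagonal Cartan entries $|C_{\al,\beta}|$ — is exactly what forces the choice of $A = C^t - C$ and the factors $t_\al$, and verifying this matching is where I expect the real work to lie.

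Next I would track the mutation matrix. Since $B$ is skew-symmetric, the mutations \eqref{Bmutations} preserve skew-symmetry; I would compute $\mu_\al(B)$ for $\al$ in the top block and show that performing the mutations at all of $\al = 1,\dots,r$ (which pairwise commute whenever the corresponding $B_{\al\beta}$ in the $A$-block vanish, via the commuting-compound formula \eqref{commutingcompound}) returns a matrix of the same block shape, up to relabeling the ``current'' and ``previous'' rows. Concretely, I expect one full sweep $\mu_1\circ\cdots\circ\mu_r$ followed by a swap of the two blocks to reproduce $B$ itself, so that the seed $(\bx[k],B)$ evolves to $(\bx[k+1],B)$ under a fixed finite word in the mutations, with every intermediate step a genuine $Q$-system evolution. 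The same analysis run in the opposite order gives the $k \mapsto k-1$ direction, covering all $k \in \Z$.

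Then I would assemble the graph $\cG_\g$: take the path of nodes $\{[k] : k \in \Z\}$ together with the intermediate nodes produced by the mutation words above, and form the quotient graph by identifying nodes with identical seeds. The periodicity established in the previous step guarantees this is a well-defined connected graph on which every edge is a $Q$-system evolution, and that it contains all nodes labeled by $k \in \Z_+$ (indeed all $k\in\Z$) with the stated seeds. Finally, for the claim that $\bigcup_{\text{nodes of }\cG_\g}\bx = \cA$: by construction every cluster variable appearing is some $\tQ_{\al,i}$, so the union is contained in $\cA$; conversely, given any $\tQ_{\al,i}$ with $i\in\Z$, one chooses $k$ with $2t_\al k \le i$ (or the analogous bound) and applies the finitely many mutations interpolating between $[k]$ and $[k+1]$ — one of the intermediate seeds, or $\bx[k]$ itself, has $\tQ_{\al,i}$ as a component. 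Since the renormalization constants $\epsilon_\al$ are fixed roots of unity (Lemma \ref{renormalize}), this exhausts all of $\cA$, completing the proof. The main obstacle, as noted, is the bookkeeping of the floor functions and symmetrizers in the first step; everything after that is a matter of organizing commuting mutations and checking periodicity.
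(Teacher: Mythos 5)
Your first and third paragraphs are sound, and your overall strategy coincides with the paper's in the simply-laced case. The genuine gap is in the second paragraph: the mutation word you propose --- one sweep $\mu_1\circ\cdots\circ\mu_r$ of pairwise commuting mutations, then the complementary sweep, returning to $(\bx[k+1],B)$ --- only exists when $C$ is symmetric. For $B_r$, $C_r$, $F_4$ and $G_2$ it fails for three concrete reasons. First, $A=C^t-C$ has $A_{\sh,\lo}=1\neq 0$, so $\mu_\sh$ and $\mu_\lo$ do \emph{not} commute and there is no compound mutation $\mu_1\circ\cdots\circ\mu_r$ in the sense you use. Second, the order is forced: at the node $k$ the cluster contains $x_\sh=\tQ_{\sh,4k}$, whereas the $Q$-system step for the adjacent long root, via \eqref{shlo}, needs $\widetilde{\cT}^{(\lo,\sh)}_{2k+1}=\tQ_{\sh,4k+2}$, which is not yet in the cluster; so $\mu_\lo$ applied at node $k$ is not a $Q$-system evolution, and one must mutate the short-root directions first. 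Third, a single sweep cannot close the period: passing from $\bx[k]$ to $\bx[k+1]$ requires advancing $\tQ_{\sh,4k}$ to $\tQ_{\sh,4k+4}$, i.e.\ two mutations in each short even direction (three for $G_2$), not one.

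The paper's proof handles this by replacing your two-step word with a five-step word for $B_r,C_r,F_4$ (short-even, long-even, short-odd, short-even again, then all odd) and a six-step word for $G_2$, and by computing the intermediate exchange matrices $B^{(1)},\dots,B^{(4)}$ explicitly; these are genuinely new matrices (e.g.\ with a nonzero block $2A$ in the lower-right corner), not merely sign flips of $B$, so your expectation that ``everything after that is a matter of organizing commuting mutations'' underestimates the work. At each intermediate node one must re-verify both which mutations commute and which columns encode $Q$-system steps. Your argument that the union of clusters exhausts $\cA$ is fine once the correct graph is in hand, since the intermediate seeds do sweep out all indices $\tQ_{\al,i}$, but as written your construction does not produce that graph outside the simply-laced case.
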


We will describe such a graph for each algebra $\g$ below.

In all cases considered below, if the mutations
$\{\mu_{i_1},...,\mu_{i_n}\}$ commute starting from node $t$ (that is,
if $B_{i_k,i_j}=0$ for all $k,j$ at the node $t$), then we may act with these
mutations in any order, as long as they are distinct, and nodes
corresponding to acting with a certain set of mutations, regardless of
order, can be
identified with each other in the quotient graph $\cG_\g$ (this is
similar in spirit to the bipartite graph of \cite{FZfour}). These
mutations give rise to an n-simplex (e.g. a cube in the case $n=3$) in
$\cG_\g$.

We will in general describe only the cluster variables reached via
certain compound mutations
$\mu^{(i)}=\mu_{i_1}\circ\cdots\circ\mu_{i_n}$, where the
corresponding mutations commute. The cluster graph $\cG_\g$ is
recovered by replacing a compound mutation with the corresponding
$n$-simplex.

\subsection{The simply laced case}
This case was considered in \cite{Ke07}. We review the results here in
order to introduce the notations.

If $\g$ is simply-laced, the Cartan matrix is symmetric, $C^t=C$, and
therefore the matrix $A$ vanishes and $B$ is block-diagonal:
\begin{equation}\label{Bsimple}
B=\left(\begin{array}{rr} 0 & -C \\ C & 0 \end{array}\right).
\end{equation}

The graph
$\cG_\g$ has a particularly simple form in this case.
It includes the special nodes $k$ ($k\in \Z_+$) with cluster variables 
\begin{equation}\label{simpleclus}
\bx[k]
= (\tQ_{1,2k},...,\tQ_{r,2k},\tQ_{1,2k+1},...,\tQ_{r,2k+1}).
\end{equation} 
In addition, it includes the special nodes $k'$, which are reached
from $k$ by the compound mutation $\mu^{(1)} = \mu_1\circ \cdots \circ
\mu_r$. These commute due to the block-diagonal structure of $B$. The
following was proved in \cite{Ke07}:
\begin{thm}\label{simple}
For $k\in \Z_+$, Let $\bx[k]$ be as in \eqref{simpleclus} and $B$ be
the matrix in \eqref{Bsimple}. Then
\begin{enumerate}
\item At the node $k$, the mutations $\mu_1,...,\mu_r$ commute among
  themselves, and evolution according to these mutations corresponds
  to the $Q$-system evolutions $\tQ_{\al,2k}\mapsto \tQ_{\al,2k+2}$,
  $\al\in I_r$.
\item At the same node, the mutations $\mu_{r+1},...,\mu_{2r}$ also
  commute among themselves, and applying them to the variables
  $\bx[k]$ corresponds to the $Q$-system evolution
  $\tQ_{\al,2k+1}\mapsto \tQ_{\al,2k-1}$, $\al\in I_r$.
\item Define the compound mutation
\begin{equation*}
\mu^{(I_r)}=\mu_1\circ \cdots \circ \mu_r
\end{equation*}
and denote the node reached by this compound mutation acting on the
cluster variables at $k$ by $k'$. Then the mutation matrix
$B'=\mu^{(I_r)}(B)$ is
\begin{equation*}
B'=-B = \left(\begin{array}{rr} 0 & C \\ -C & 0\end{array}\right).
\end{equation*}
and 
$$\mu^{(I_r)}(\bx[k]) =
(\tQ_{1,2k+2},...,\tQ_{r,2k+2},\tQ_{1,2k+1},...,\tQ_{r,2k+1}) = \bx[k'].$$
\item At the point $k'$, the mutations $\mu_{r+1 },...,\mu_{2r}$
  commute among themselves, the evolution according to these mutations
  corresponds to the $Q$-system evolution $\tQ_{\al,2k+1}\mapsto
  \tQ_{\al,2k+3}$.
\item The compound mutation 
$$
\mu^{(I_r')} = \mu_{r+1}\circ\cdots\circ \mu_{2r}
$$
maps the seed at $k'$ to the seed at $k+1$ with cluster
variables $\bx[k+1]$ and mutation matrix $B$.
\end{enumerate}
\end{thm}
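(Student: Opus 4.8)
The plan is to verify the five assertions by direct computation from the mutation rules \eqref{Bmutations}, using only three inputs: the block structure of $B$ in \eqref{Bsimple}; the elementary properties of a simply-laced Cartan matrix $C$ (symmetric, $C_{\al\al}=2$, and $C_{\al\beta}\in\{0,-1\}$ with $C_{\al\beta}=-1$ iff $\al\sim\beta$); and the normalized $Q$-system \eqref{tQsystem}, which for simply-laced $\g$ reads $\tQ_{\al,k+1}\tQ_{\al,k-1}=\tQ_{\al,k}^2+\prod_{\beta\sim\al}\tQ_{\beta,k}$ because $\widetilde{\cT}^{(\al,\beta)}_k=\tQ_{\beta,k}$ in this case. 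Everything reduces to two local computations --- the evolution of a single cluster variable, and the evolution of a single matrix entry --- together with bookkeeping of which mutations commute.

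First I would dispose of the commutation claims in (1), (2) and (4). Since the two diagonal $r\times r$ blocks of $B$ vanish, $B_{ij}=0$ whenever $i,j\le r$ and whenever $i,j>r$, so by the property that $B_{ij}=0$ forces $\mu_i$ and $\mu_j$ to commute, each of the families $\{\mu_1,\dots,\mu_r\}$ and $\{\mu_{r+1},\dots,\mu_{2r}\}$ commutes at node $k$; since (as shown below) the matrix at $k'$ is $-B$, with the same vanishing blocks, the same argument gives (4). Next, for the single-variable evolutions I read off the relevant column of $B$ and plug it into the $\mu_k$-rule. For a top index $\al\le r$ at node $k$, column $\al$ is $B_{r+\gamma,\al}=C_{\gamma,\al}$ and $B_{j\al}=0$ for $j\le r$; using $C_{\al\al}=2$ and $C_{\gamma\al}=-1\iff\gamma\sim\al$, the two monomials become $\tQ_{\al,2k+1}^2$ and $\prod_{\beta\sim\al}\tQ_{\beta,2k+1}$, so $\mu_\al(x_\al)=\tQ_{\al,2k}^{-1}(\tQ_{\al,2k+1}^2+\prod_{\beta\sim\al}\tQ_{\beta,2k+1})=\tQ_{\al,2k+2}$ by \eqref{tQsystem}. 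The remaining cases are structurally identical: a bottom index $r+\al$ at node $k$ gives $\tQ_{\al,2k+1}\mapsto\tQ_{\al,2k-1}$, and a bottom index at node $k'$ (using the column of $-B$) gives $\tQ_{\al,2k+1}\mapsto\tQ_{\al,2k+3}$. Composing the commuting mutations, each of which fixes every other entry of the cluster, then produces $\mu^{(I_r)}(\bx[k])$ and $\mu^{(I_r')}(\bx[k'])$ as asserted.

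For the mutation-matrix parts of (3) and (5), I would apply the compound-mutation identity \eqref{commutingcompound}, extended to the commuting family $\mu^{(I_r)}=\mu_1\circ\cdots\circ\mu_r$, entry by entry. Entries $B_{ij}$ with $i,j\le r$ stay $0$. An entry with exactly one index $\le r$ gets no correction --- the correction sums involve factors $B_{k,m}$ with both indices $\le r$, which vanish --- and is flipped in sign by the one relevant elementary mutation, so the block $-C^t$ becomes $C^t=C$ and the block $C$ becomes $-C$. An entry $B_{r+\al,r+\beta}$ with $\al\ne\beta$ receives $\sum_{m\le r}\sgn(B_{r+\al,m})[B_{r+\al,m}B_{m,r+\beta}]_+$, in which the only $m$ with a nonzero contribution are $m=\al$ (contributing $+2$ if $\al\sim\beta$, else $0$) and $m=\beta$ (contributing $-2$ if $\al\sim\beta$, else $0$), so the sum vanishes. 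Hence $\mu^{(I_r)}(B)=-B$, and the computation for $\mu^{(I_r')}$ at node $k'$ is the mirror image, returning $B$ and the cluster $\bx[k+1]$.

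The main obstacle --- essentially the only point where the simply-laced hypothesis is used beyond the symmetry of $C$ --- is this cancellation in the bottom-right block: one must confirm that composing the $r$ mutations creates no new nonzero entries there, i.e.\ that ``squaring the Dynkin adjacency pattern'' is exactly annihilated by the diagonal $2$'s of $C$. For simply-laced $\g$ this is the two-term check above; it is precisely this bookkeeping that becomes more delicate, and which forces the non-diagonal correction $A=C^t-C$ in \eqref{Bmatrix}, in the non-simply-laced cases treated in the following subsections.
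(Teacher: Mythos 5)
Your proof is correct and follows exactly the route the paper intends: the paper dismisses Theorem \ref{simple} with ``proved by direct calculation'' (deferring to \cite{Ke07}), and your argument is precisely that calculation --- reading off columns of $B$ to match the exchange relation with \eqref{tQsystem}, and checking via \eqref{commutingcompound} that the diagonal blocks stay zero so that $\mu^{(I_r)}(B)=-B$. The two-term cancellation $+2-2=0$ in the lower-right block is verified correctly, and the remaining parts follow by the symmetry $k\leftrightarrow k'$ as you say.
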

This theorem is proved by direct calculation. 

Let us illustrate some examples of graphs which correspond to
$\cG_\g$.

\begin{example}
The
graph $\cG_{\sl_2}$
is linear:
\begin{center}
\psset{unit=2mm,linewidth=.4mm,dimen=middle}
\begin{pspicture}(0,0)(50,10)
\psline(5,5)(45,5)
\multips(5,5)(10,0){5}{\pscircle*[linecolor=red](0,0){.7}}
\rput(5,3){$(k-1)'$}
\rput(15,3){$k$}
\rput(25,3){$k'$}
\rput(35,3){$k+1$}
\rput(45,3){$(k+1)'$}
\rput(3,5){$\cdots$}
\rput(48,5){$\cdots$}
\rput(10,6){2}
\rput(20,6){1}
\rput(30,6){2}
\rput(40,6){1}
\end{pspicture}
\end{center}
The cluster variables at the primed and unprimed nodes are described
by $\bx[k]=(\tQ_{1,2k},\tQ_{1,2k+1})$ and
$\bx[k']=(\tQ_{1,2k+2},\tQ_{1,2k+1})$ of Theorem \ref{simple}. The
numbers above each edge correspond to the mutations $\mu_1$ and
$\mu_2$. This is the full tree $\mathbb T_2$.
\end{example}

\begin{example}
The graph $\cG_{\sl_3}$ is slightly more interesting:
\begin{center}
\psset{unit=1.5mm,linewidth=.4mm,dimen=middle}
\begin{pspicture}(0,0)(80,30)
\multips(0,0)(20,0){4}{\psline(0,10)(10,20)(20,10)(10,0)(0,10)}
\multips(0,10)(20,0){5}{\pscircle*[linecolor=red](0,0){.7}}
\multips(10,0)(20,0){4}{\pscircle*[linecolor=red](0,0){.7}}
\multips(10,20)(20,0){4}{\pscircle*[linecolor=red](0,0){.7}}
\rput(-2,7){$(k-1)'$}
\rput(20,7){$k$}
\rput(40,7){$k'$}
\rput(60,7){$k+1$}
\rput(82,7){$(k+1)'$}
\rput(-3,10){$\cdots$}
\rput(83,10){$\cdots$}
\rput(6,6){4}
\rput(26,6){2}
\rput(46,6){4}
\rput(66,6){2}
\rput(4,16){3}
\rput(24,16){1}
\rput(44,16){3}
\rput(64,16){1}
\rput(16,16){4}
\rput(36,16){2}
\rput(56,16){4}
\rput(76,16){2}
\rput(14,6){3}
\rput(34,6){1}
\rput(54,6){3}
\rput(74,6){1}
\end{pspicture}
\end{center}
The cluster algebra has four variables, and four possible mutations,
which are the labels along the edges. The special nodes $k$ and their
primed versions appear on the center line of the graph. The other
nodes have intermediate cluster variables.
\end{example}

The full cluster algebra graph of $\sl_3$ would include, for example,
an edge labeled $4$ emanating from the node $\mu_1(k)$. However the cluster
mutation in this direction is not a $Q$-system evolution.

\begin{example}\label{Athree}
The graph $\cG_{\sl_4}$ described in Theorem \ref{simple} is as follows:
\begin{center}
\psset{unit=2mm,linewidth=.4mm,dimen=middle}

\begin{pspicture}(0,10)(60,33)
\psline(10,10)(0,20)(10,30)
\psline(10,10)(20,20)(10,30)
\psline[linewidth=1.5mm,linecolor=white](20,10)(10,20)(20,30)
\psline(20,10)(10,20)(20,30)
\psline(10,10)(20,10)
\psline(20,30)(10,30)
\psline(0,20)(10,20)
\psline(20,10)(30,20)(20,30)
\psline(40,30)(30,20)(40,10)
\psline(40,10)(50,20)(40,30)
\psline[linewidth=1.5mm,linecolor=white](50,10)(40,20)(50,30)
\psline(50,10)(40,20)(50,30)
\psline(20,20)(40,20)
\psline(50,10)(60,20)(50,30)
\psline(40,10)(50,10)
\psline(50,20)(60,20)
\psline(40,30)(50,30)
\multips(0,20)(10,0){7}{\pscircle*[linecolor=red](0,0){.7}}
\multips(10,10)(10,0){2}{\pscircle*[linecolor=red](0,0){.7}}
\multips(40,10)(10,0){2}{\pscircle*[linecolor=red](0,0){.7}}
\multips(10,30)(10,0){2}{\pscircle*[linecolor=red](0,0){.7}}
\multips(40,30)(10,0){2}{\pscircle*[linecolor=red](0,0){.7}}
\rput(30,0){\pscircle*[linecolor=white](0,0){.5}}
\rput(6,16){3}
\rput(5,21){2}
\rput(4,26){1}
\rput(15,11){2}
\rput(15,31){2}
\rput(13,14){1}
\rput(13,25){1}
\rput(14,18){3}
\rput(14,28){3}
\rput(26,26){3}
\rput(25,21){2}
\rput(24,16){1}
\rput(36,16){6}
\rput(35,21){5}
\rput(34,26){4}
\rput(45,11){5}
\rput(45,31){5}
\rput(43,14){4}
\rput(43,25){4}
\rput(44,18){6}
\rput(44,28){3}
\rput(56,26){6}
\rput(55,21){5}
\rput(54,16){4}
\rput(0,18){$k$}
\rput (30,18){$k'$}
\rput(61,18){$k+1$}
\rput(63,20){$\cdots$}
\rput(-3,20){$\cdots$}
\end{pspicture}
\end{center}

\vskip.1in
\end{example}

The graphs $\cG_\g$ which we describe are not, in
general, the complete subgraph of $\mathbb T_n$ corresponding to
$Q$-system evolutions, except when ${\rm rank}(\g)\leq 2$. We chose
the graphs $\cG_\g$ so that the union of all cluster variables at
their nodes corresponds to the complete set of normalized characters
of the $KR$-modules. These graphs are not unique, and their extensions
are of interest also (see section \ref{extensions}).

Clearly, the cluster graphs $\cG_\g$ are difficult to
draw as the rank becomes high. Therefore, in treating the higher-rank
algebras, it is useful to adopt a pictorial notation which is similar
in spirit to the bipartite graph notation of \cite{FZfour}.

First, we define the notion of a compound mutation to be a  product
of $n$ distinct commuting mutations. Let $I=\{i_1,...,i_n\}$ be the
index set of these mutations, and define $\mu^{(I)}=\mu_{i_1}\circ \cdots
\circ \mu_{i_n}$. The order of mutations is irrelevant by
definition. We may also use the notation $\mu^{(I)}=\prod_{i\in I}
\mu_i$. Obviously, since $\mu_i^2=1$, we preserve the property that
$(\mu^{(I)})^2=1$ for the product of commuting notations.

The edge:
\begin{center}
\psset{unit=1.5mm,linewidth=.4mm,dimen=middle}
\begin{pspicture}(0,0)(20,7)
\psline(3,3)(17,3)
\multips(3,3)(14,0){2}{\pscircle*[linecolor=red](0,0){.7}}
\rput(10,5){$I$}
\rput(3,1){$t$}
\rput(17,1){$t'$}
\end{pspicture}
\end{center}
corresponds to the compound mutation $t' = \mu^{(I)}(t)$.  In the
graph $\cG_\g$, this corresponds to an $n$-simplex with node $t$ at
one end and node $t'$ at the other. It is a diamond in the case for
$\cG_{\sl_3}$ or a cube in the case of $\cG_{\sl_4}$.

Thus the graph $\cG_\g$ for
$\g$ simply-laced with the sequence of nodes and compound mutations as
follows: 
\begin{center}
\psset{unit=2mm,linewidth=.4mm,dimen=middle}
\begin{pspicture}(0,0)(50,10)
\psline(5,5)(45,5)
\multips(5,5)(10,0){5}{\pscircle*[linecolor=red](0,0){.7}}
\rput(5,3){$(k-1)'$}
\rput(15,3){$k$}
\rput(25,3){$k'$}
\rput(35,3){$k+1$}
\rput(45,3){$(k+1)'$}
\rput(3,5){$\cdots$}
\rput(48,5){$\cdots$}
\rput(10,6){$\Pi'$}
\rput(20,6){$\Pi$}
\rput(30,6){$\Pi'$}
\rput(40,6){$\Pi$}
\end{pspicture}
\end{center}
where $\Pi=I_r$ and $\Pi' = \{r+1,...,2r\}$.
The cluster
variables and mutation matrices at the nodes $k$, $k'$ are given by
Theorem \ref{simple}.

\subsection{The $Q$-system for the algebras $B_r,C_r,F_4$}

In the case of non-simply laced algebras, we describe the graph
$\cG_\g$ for algebras of types $B_r,C_r,F_4$ separately from $G_2$.

Let $\g$ be a simple algebra of type $B_r$, $C_4$ or $F_4$. In the
Dynkin diagram of $\g$, let $\lo$ be the long root connected to the
short root $\sh$: 
\begin{center}
\psset{unit=2mm,linewidth=.4mm,dimen=middle}
\begin{pspicture}(0,0)(50,5)
\rput(0,2){$B_r:$}
\psline(11,2)(19,2)
\rput(25,2){$\cdots$}
\psline(31,2)(39,2)
\psline[doubleline=true]{->}(41,2)(49,2)
\multips(10,2)(10,0){5}{\pscircle(0,0){1}}
\rput(10,0){1}
\rput(20,0){2}
\rput(30,0){$r-2$}
\rput(40,0){$r-1$}
\rput(50,0){$r$}
\rput(50,2){$s$}
\rput(40,2){$\ell$}
\end{pspicture}
\end{center}
\begin{center}
\psset{unit=2mm,linewidth=.4mm,dimen=middle}
\begin{pspicture}(0,0)(50,5)
\rput(0,2){$C_r:$}
\psline(11,2)(19,2)
\rput(25,2){$\cdots$}
\psline(31,2)(39,2)
\psline[doubleline=true]{<-}(41,2)(49,2)
\multips(10,2)(10,0){5}{\pscircle(0,0){1}}
\rput(10,0){1}
\rput(20,0){2}
\rput(30,0){$r-2$}
\rput(40,0){$r-1$}
\rput(50,0){$r$}
\rput(50,2){$\lo$}
\rput(40,2){$\sh$}
\end{pspicture}
\end{center}
\begin{center}
\psset{unit=2mm,linewidth=.4mm,dimen=middle}
\begin{pspicture}(0,0)(40,5)
\rput(0,2){$F_4:$}
\psline(11,2)(19,2)
\psline[doubleline=true]{->}(21,2)(29,2)
\psline(31,2)(39,2)
\multips(10,2)(10,0){4}{\pscircle(0,0){1}}
\rput(10,0){1}
\rput(20,0){2}
\rput(30,0){3}
\rput(40,0){4}
\rput(20,2){$\lo$}
\rput(30,2){$\sh$}
\end{pspicture}
\end{center}
\vskip.1in
In each case, in the normalized $Q$-system
\eqref{tQsystem} we have
\begin{eqnarray}
\widetilde{\cT}_{2k+1}^{(\sh,\lo)} &=& \tQ_{\lo,k}\tQ_{\lo,k+1} =
\tQ_{\lo,k}^{[-C_{\sh\lo}/2]_+} \tQ_{\lo,k+1}^{[-C_{\sh\lo}/2]_+} 
;\label{odd}\\
\widetilde{\cT}_{2k}^{(\sh,\lo)} &=& \tQ_{\lo,k}^2 = \tQ_{\lo,k}^{[-C_{\sh\lo}]_+}
;\label{even}\\
\widetilde{\cT}_{k}^{(\lo,\sh)} &=& \tQ_{\sh,2k} =
\tQ_{\sh,2k}^{[-C_{\lo,\sh}]_+}\label{shlo}.
\end{eqnarray}
In case $(\al,\beta)$ are not the pair $(\lo,\sh)$ or
$(\sh,\lo)$ then
\begin{equation}\label{otherwise}
\widetilde{\cT}_k^{(\al,\beta)}= \tQ_{\beta,k}^{[-C_{\beta,\al}]_+}.
\end{equation}

Define the sets 
\begin{itemize}
\item $\Pi'=I_r'= \{r+1,...,2r\}$;
\item $\Pi_<$: The subset in $I_r$ corresponding to the short roots of $\g$;
\item $\Pi_>$: The subset in $I_r$ corresponding to the long roots of $\g$;
\item $\Pi_<'=\{\al+r ~|~ \al\in \Pi_<\}$;
\item $\Pi_>'=\{\al+r ~|~ \al\in \Pi_>\}$;
\end{itemize}

\begin{thm}
The graph $\cG_\g$ is obtained from the sequence
\begin{center}
\psset{unit=2mm,linewidth=.4mm,dimen=middle}
\begin{pspicture}(0,0)(60,10)
\psline(5,5)(55,5)
\multips(5,5)(10,0){6}{\pscircle*[linecolor=red](0,0){.7}}
\rput(5,3){$k$}
\rput(15,3){$k^{(1)}$}
\rput(25,3){$k^{(2)}$}
\rput(35,3){$k^{(3)}$}
\rput(45,3){$k^{(4)}$}
\rput(55,3){$k+1$}
\rput(3,5){$\cdots$}
\rput(58,5){$\cdots$}
\rput(10,7){${\Pi_<}$}
\rput(20,7){${\Pi_>}$}
\rput(30,7){${\Pi_<'}$}
\rput(40,7){${\Pi_<}$}
\rput(50,7){${\Pi'}$}
\end{pspicture}
\end{center}
Here, the cluster at the node labeled $k$ is $\bx[k]$ as in
\eqref{clustervariable} with mutation matrix $B$ as in \eqref{Bmatrix}.
All the compound mutations are products of mutually
commuting mutations, and the cluster variables at each node are in $\cA$.
\end{thm}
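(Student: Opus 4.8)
The proof will be, as in the simply-laced case (Theorem~\ref{simple}), a direct computation; the new features to keep track of are that the off-diagonal block $A=C^t-C$ of $B$ in \eqref{Bmatrix} is now nonzero, so the compound mutations mix the two $r\times r$ blocks of a seed, and that the short roots ``evolve twice as fast'' as the long ones --- this is the role of $t_\al$ in \eqref{clustervariable} and of the floor functions in \eqref{T} --- so the five compound mutations of the period realize a non-uniform family of $Q$-system steps rather than the single uniform one of the simply-laced case.

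First I would record the entries of $B$ in block form: for $\al,\beta\in I_r$, $B_{\al\beta}=A_{\al\beta}=C_{\beta\al}-C_{\al\beta}$, which vanishes unless $\{\al,\beta\}=\{\lo,\sh\}$, with $A_{\sh\lo}=1=-A_{\lo\sh}$; $B_{\al,\beta+r}=-C_{\beta\al}$; $B_{\al+r,\beta}=C_{\al\beta}$; and $B_{\al+r,\beta+r}=0$. The structural input used throughout is that $\sh,\lo$ carry the unique double bond while every other simple root is adjacent only to roots of its own length, so the long roots and the short roots each form a chain, joined at the pair $(\lo,\sh)$.

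Then I would go around the period $k\to k^{(1)}\to k^{(2)}\to k^{(3)}\to k^{(4)}\to k+1$ edge by edge, and for each compound mutation $\mu^{(I)}$ do three things in the current seed: (a) check commutativity, i.e.\ that $B_{ij}=0$ for all $i,j\in I$ --- for the first $\mu^{(\Pi_<)}$ this is immediate since $\Pi_<$ consists of short roots and $A$ vanishes on short--short pairs, and for the later ones it follows from the explicit mutated matrix computed at the previous step together with the compound formula \eqref{commutingcompound}; (b) identify each mutation $x_i\mapsto x_i^{-1}\big(\prod_j x_j^{[B_{ji}]_+}+\prod_j x_j^{[-B_{ji}]_+}\big)$ with a step of \eqref{tQsystem} by matching the two monomials against $\tQ_{\al,m}^2$ and $\prod_{\beta\sim\al}\widetilde{\cT}^{(\al,\beta)}_m$ for the correct index $m$, using \eqref{odd} and \eqref{otherwise}; and (c) compute the new seed matrix via \eqref{Bmutations} and \eqref{commutingcompound}. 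Carrying this out yields, in order: $\tQ_{\sh,4k}\mapsto\tQ_{\sh,4k+2}$ on the short roots of the first block (from $\Pi_<$, via \eqref{tQsystem} at $m=4k+1$); then $\tQ_{\lo,2k}\mapsto\tQ_{\lo,2k+2}$ on the long roots of the first block (from $\Pi_>$, at $m=2k+1$, using $\widetilde{\cT}^{(\lo,\sh)}_{2k+1}=\tQ_{\sh,4k+2}$, which forces $\Pi_<$ to be applied before $\Pi_>$); then $\tQ_{\sh,4k+1}\mapsto\tQ_{\sh,4k+3}$ on the short roots of the second block (from $\Pi_<'$, at $m=4k+2$); then $\tQ_{\sh,4k+2}\mapsto\tQ_{\sh,4k+4}$ again on the short roots of the first block (from $\Pi_<$, at $m=4k+3$); and finally $\tQ_{\sh,4k+3}\mapsto\tQ_{\sh,4k+5}$ together with $\tQ_{\lo,2k+1}\mapsto\tQ_{\lo,2k+3}$ on the second block (from $\Pi'$, at $m=4k+4$ and $m=2k+2$). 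Comparing with \eqref{clustervariable} at $k+1$, the resulting seed is $(\bx[k+1],B)$, so the period closes; and since each partial compound mutation merely replaces some coordinates by their evolved values --- the commuting $\mu_i$ do not interfere, as $\mu_i$ ignores $x_j$ whenever $B_{ij}=0$ --- every seed of the $n$-simplices making up $\cG_\g$ has all its coordinates in $\cA$. Letting $k$ range over $\Z$ (iterating the period forwards and backwards), the indices $4k,4k+1,4k+2,4k+3$ and $2k,2k+1$ sweep all of $\Z$, so the union of the cluster variables over $\cG_\g$ is $\cA$.

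The main obstacle is step (c). Unlike the block-diagonal simply-laced case, where $\mu^{(\Pi)}$ simply sends $B\mapsto-B$, here the very first $\mu^{(\Pi_<)}$ already feeds contributions into the $A$-block and into both cross-blocks through the $\sgn(B_{i\sh})[B_{i\sh}B_{\sh l}]_+$ terms, and the seed matrix at each of the six nodes of the period is a genuinely different skew-symmetric matrix; one must keep an explicit record of it in order both to verify that the next index set is still a commuting set and to check that the matrix has returned to $B$ after the full period. Because of the short/long asymmetry this record has to be kept with a case split according to whether each index is short, long, equal to $\sh$, or equal to $\lo$, and whether it lies in the first or the second $r$-block --- which is where essentially all of the routine but lengthy computation sits.
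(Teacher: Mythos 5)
Your proposal is correct and follows essentially the same route as the paper: a direct computation around the five-step period $\Pi_<,\Pi_>,\Pi_<',\Pi_<,\Pi'$, recording at each intermediate node $k^{(i)}$ the cluster variables and the mutated exchange matrix, checking that each index set is commuting in the current seed, matching each exchange relation to the appropriate instance of \eqref{tQsystem} via \eqref{odd}--\eqref{otherwise} (including the ordering constraint that $\tQ_{\sh,4k+2}$ must be produced before the long-root step can use \eqref{shlo}), and verifying that the matrix returns to $B$ at $k+1$. The paper organizes exactly this computation as a chain of five lemmas giving $\bx[k^{(i)}]$ and $B^{(i)}$ explicitly, which is the bookkeeping you correctly identify as the bulk of the work.
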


To prove this Theorem, we need only list the cluster variables
$(\bx[k^{(i)}],B^{(i)})$ at $k^{(i)}$ -- at intermediate nodes, the
cluster variables are a mixture of cluster variables from $k^{(i-1)}$
and $k^{(i)}$. Note that in this subsection, we read the superscript
of $k$ modulo 5, so $k^{(0)}:=k$ etc. 

We then observe that the relevant columns of $B^{(i)}$ describe
$Q$-system evolutions when applied to the relevant entries of
$\bx[k^{(i)}]$. We use the fact that when $\mu_i$ and $\mu_j$ commute, the
$j$th column of $\mu_i(B)$ is equal to the $j$th column of $B$.

We recall that at the node $k$, we have
$\bx[k] = (x_1,...,x_{2r})$, where
$$
x_\al= \left\{\begin{array}{ll} \tQ_{\al,2t_\al k},& \al\in I_r;\\
\tQ_{\al,2t_\al k + 1}, & \hbox{otherwise}.\end{array}\right.
$$
Moreover,
according to \eqref{Bmatrix}, at the node $k$ we have
$$
B = \left(\begin{array}{cc} A & -C^t\\   C & 0 \end{array}\right).
$$
Here, the submatrix $A$ is no longer the zero matrix. Instead, it has
zeros everywhere
except for two entries, $A_{\lo,\sh}=-1$ and $A_{\sh,\lo}=1$.  Thus,
the mutations $\{\mu_\al |\ \al \in \Pi_<\} $ commute among
themselves, as do the all mutations $\{\mu_{i} |\ i\in \Pi'\}$.

It is clear from the form of the $Q$-system (see Equations
\eqref{odd} and \eqref{otherwise}) that the
mutations $\{ \mu_\al|~\al\in \Pi_<\}$ describe the
evolutions $\mu_\al: \tQ_{\al,2k}\mapsto \tQ_{\al,2k+2}$ ($\al\in
\Pi_<$). This is the same evolution 
as in the simply-laced case if $\al\neq \sh$, and for $\al=\sh$, it follows
from \eqref{odd}.

Thus, edges labeled by $\al\in \Pi_<$ connected to
node $k$ lead to nodes in which all the cluster variables are
in $\cA$. The compound mutation $\mu^{(\Pi_<)}$ leads to the node $k^{(1)}$.

(We also note that the mutations columns $r+1,...,2r$ describe the $Q$-system
evolutions $\tQ_{\al,2k+1}\mapsto \tQ_{\al,2k-1}$, from Equations
\eqref{even}, \eqref{shlo} and \eqref{otherwise}; However due to the
periodic structure of the graph, we do not need to consider this
evolution separately).

\begin{lemma}
\begin{enumerate}
\item The cluster variables at the node $k^{(1)}$ are 
$
\bx[k^{(1)}] = (x_1,...,x_{2r})=\mu^{(\Pi_<)}(\bx[k])$ 
with 
$$x_\al =\left\{ \begin{array}{ll}
 \tQ_{\al,4k+2} & \hbox{ if $\al$ is a short root};\\
 \tQ_{\al,2k}& \hbox{ if $\al$ is a long root};\\ 
 \tQ_{\al,2t_\al k+1} & \hbox{ if $\al>r$};\end{array}\right.
$$
\item The mutation matrix $B^{(1)}$ at $k^{(1)}$ has the following form:
$$
B^{(1)} = \left( \begin{array}{c|c} -A & -D^t \\ \hline D &
    2A\end{array}\right), 
$$
where 
$$D_{ij} = \left\{ \begin{array}{ll}
 C_{ij}, & \hbox{ if $i$ and $j$ are both long roots}; \\
-C_{ij}, & \hbox{ if $j$ is short}; \\
0 & \hbox{otherwise.}\end{array}\right.
$$
\end{enumerate}
\end{lemma}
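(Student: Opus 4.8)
The plan is to verify both assertions by a direct computation from the mutation rules, organized around the block form \eqref{Bmatrix} of $B$. First I would check that $\mu^{(\Pi_<)}$ is a genuine compound mutation, i.e.\ that the $\mu_\al$ with $\al\in\Pi_<$ pairwise commute: for $\al,\beta\in\Pi_<$ both simple roots are short and of equal length, so $C$ is symmetric on $\Pi_<\times\Pi_<$ and hence $B_{\al\beta}=A_{\al\beta}=(C^t-C)_{\al\beta}=0$. Since $\mu_i(x_j)=x_j$ for $i\neq j$, the compound mutation then leaves every coordinate whose index is not in $\Pi_<$ untouched --- in particular the long-root coordinates $\tQ_{\al,2k}$ (where $t_\al=1$) and the coordinates with index $>r$ --- and replaces each short-root coordinate $x_\al=\tQ_{\al,4k}$ (where $t_\al=2$) by $\mu_\al(x_\al)$; this yields the unchanged cases in (1).

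To finish (1), for a fixed short root $\al$ I would read the $\al$-th column of $B$ off \eqref{Bmatrix}: its entries of index $\le r$ are entries of $A$, which vanish unless $\al=\sh$, when only $A_{\lo,\sh}=-1$ is nonzero; its entries of index $>r$ are $C_{\beta,\al}$. Substituting into $\mu_\al(x_\al)=x_\al^{-1}\big(\prod_j x_j^{[B_{j\al}]_+}+\prod_j x_j^{[-B_{j\al}]_+}\big)$, the positive-exponent monomial is $x_{\al+r}^{[C_{\al\al}]_+}=\tQ_{\al,4k+1}^2$. For the negative-exponent monomial one uses that a neighbour of a short root is again short unless the pair is $(\sh,\lo)$, so its $\widetilde{\cT}$ has a single factor by \eqref{otherwise}, while for $\al=\sh$ the long neighbour $\lo$ supplies exactly the two factors of \eqref{odd}; hence that monomial equals $\prod_{\beta\sim\al}\widetilde{\cT}^{(\al,\beta)}_{4k+1}$, and the normalized $Q$-system \eqref{tQsystem} at index $4k+1$ gives $\mu_\al(x_\al)=\tQ_{\al,4k+2}$.

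For (2) I would compute $B^{(1)}=\mu^{(\Pi_<)}(B)$ entrywise, splitting the index pairs according to whether the indices lie in $\Pi_<$. An entry with at least one index in $\Pi_<$ only changes sign (and an entry inside $\Pi_<\times\Pi_<$, being $0$, stays $0$): since $B$ vanishes on $\Pi_<\times\Pi_<$, one may apply the mutation at that index last, leaving the entry untouched by the others. This produces $-A$, $-D^t$ and $D$ along all short rows and columns. For an entry $B^{(1)}_{ij}$ with $i,j\notin\Pi_<$, the commuting-compound rule \eqref{commutingcompound} adds $\sum_{\al\in\Pi_<}\sgn(B_{i\al})[B_{i\al}B_{\al j}]_+$; since $B_{i\al}$ and $B_{\al j}$ are entries of $A$ and of $C$, which among these indices are supported only on the single long--short adjacency $(\lo,\sh)$ and on the Cartan diagonal $C_{\al\al}=2$, each such sum has at most one nonzero term. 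The long--long part of the top-left block then stays $0=-A$; the single corrections to the off-diagonal blocks, located on the $\lo$ and $\sh$ rows and columns, convert the relevant entries into the corresponding entries of $-D^t$ and $D$; and the previously zero bottom-right block acquires exactly $2A$, the factor $2$ coming from $C_{\al\al}$. Assembling the pieces gives the claimed $B^{(1)}$.

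I expect the main difficulty, such as it is, to be the sign bookkeeping in the compound-mutation formula for entries with one index in $\Pi_<$, together with the verification that in each collapsing sum above exactly one value of $\al$ survives. Once the $2r\times2r$ matrix is organized by the short/long dichotomy this becomes a finite mechanical calculation, and the content of the lemma is essentially that this calculation closes up consistently.
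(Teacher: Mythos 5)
Your approach coincides with the paper's: a direct entrywise computation of $\mu^{(\Pi_<)}$, organized by the block structure of $B$ in \eqref{Bmatrix} and the short/long dichotomy, using that entries with exactly one index in $\Pi_<$ merely flip sign while the rest pick up the correction sum \eqref{commutingcompound}; your treatment of part (1) via the $Q$-system at index $4k+1$ is exactly what the paper does, and your final answers for $B^{(1)}$ are correct. One step is misstated, however: in the bottom-right block the correction sum $\sum_{\al\in\Pi_<}\sgn(B_{i+r,\al})[B_{i+r,\al}B_{\al,j+r}]_+$ does \emph{not} always have at most one nonzero term. When $i$ and $j$ are two \emph{adjacent short} roots (which occurs for $C_r$ with $r\geq 3$ and for $F_4$), both $\al=i$ and $\al=j$ contribute, with values $+2$ and $-2$ respectively, and the entry equals $0=2A_{ij}$ only because these cancel; a literal application of your ``exactly one value of $\al$ survives'' rule would give $\pm 2$ there. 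Relatedly, the claim that $C$ on these indices is supported only on the diagonal and the single pair $(\lo,\sh)$ is too strong, since short--short (and long--long) adjacencies also give nonzero Cartan entries. Once this cancellation is accounted for, the calculation closes as you describe and matches the paper's proof.
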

\begin{proof}
The first statement follows from the discussion above.

The second statement involves several cases.
\begin{itemize}
\item Because the
mutations $\mu_\al$ with $\al<r$ and $\al\in \Pi_<$ mutually commute, 
their effect on the columns and rows of $B$ indexed by $\{\al:~\al\in
\Pi_<\}$ is only to  change their sign. Thus,
$$B_{i,\al}^{(1)}=-B_{\al,i}^{(1)}=-B_{i,\al},\quad \al\in \Pi_<.$$ 
This accounts for the appearance of $-A$ in the upper left hand corner
of $B^{(1)}$, as well as the columns of the matrix $D$ which
correspond to short roots.

\item Let $\beta\in \Pi_>$ be a long root and consider the change 
  column $\beta$ of $B$. If $i\notin \Pi_<$, then
$$
B_{i,\beta}^{(1)} = B_{i,\beta} + \sum_{\al\in \Pi_<}
\sgn(B_{i,\al}) [B_{i,\al}B_{\al,\beta}]_+.
$$
But $B_{\al,\beta}=0$ unless $(\al,\beta)=(\sh,\lo)$. In that case,
$B_{\sh,\lo}=1$. We have that $B_{i,\sh}>0$ if and only if $i=\sh+r$,
in which case it is equal to $2$. Thus, we have
$$
B_{\sh+r,\lo}^{(1)} = 0,
$$
and all other elements in the long-root columns of the lower left hand
corner of $B^{(1)}$ are unchanged. This gives the matrix $D$ in this
quadrant, and $-D^t$ in the upper right block of $B^{(1)}$ since
mutations preserve skew-symmetry.

\item Finally, elements in the lower right quadrant mutate as follows:
\begin{eqnarray*}
B_{i+r,j+r}^{(1)} &=& \sum_{\al\in \Pi_<}
\sgn(B_{i+r,\al})[B_{i+r,\al}B_{\al,j+r}]_+ = 2A.
\end{eqnarray*}
\end{itemize}
\end{proof}

By examination of the matrix $B^{(1)}$ we conclude that at the node
$k^{(1)}$, the mutations $\{\mu_\al:~\al\in \Pi_>\}$ all commute among
themselves.
\begin{lemma}
\begin{enumerate}
\item The cluster variables
  $\bx[k^{(2)}]=(x_1,...,x_{2r})=\mu^{(\Pi_>)}(\bx[k^{(1)}])$ are 
$$
x_{\al} = \left\{ \begin{array}{ll} \tQ_{2 t_\al k + 2} & \al\leq r; \\
\tQ_{2 t_\al k + 1} & \al>r.\end{array}\right.
$$
\item The mutation matrix $B^{(2)}$ has the form
$$
\left(\begin{array}{cc} A & C-A \\ -C^t-A & 2A \end{array}\right).
$$
\end{enumerate}
\end{lemma}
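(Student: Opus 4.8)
The plan is to proceed exactly as the paper indicates in its general strategy: we compute the effect of the compound mutation $\mu^{(\Pi_>)}$ on the seed $(\bx[k^{(1)}], B^{(1)})$ by hand, using the fact that at $k^{(1)}$ the mutations $\{\mu_\al : \al \in \Pi_>\}$ mutually commute (which was established by inspection of $B^{(1)}$ — its upper-left block is $-A$, and $A$ has nonzero entries only at positions $(\lo,\sh)$ and $(\sh,\lo)$, so any two long-root indices $\al,\beta$ have $B^{(1)}_{\al\beta}=0$). Because the mutations commute, I can apply the compound-mutation formula \eqref{commutingcompound} directly, rather than composing $\mu_\al$ one at a time.

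First I would verify the cluster-variable claim. Applying $\mu_\al$ for $\al \in \Pi_>$ to $\bx[k^{(1)}]$ changes only the entries $x_\al$ with $\al$ a long root, from $\tQ_{\al,2k}$ to a new value determined by the $\al$-th column of $B^{(1)}$. Reading off that column: it has $C_{ij}$ among long roots (i.e.\ $D_{ij}=C_{ij}$), and a vanishing entry in row $\sh+r$ (since $B^{(1)}_{\sh+r,\lo}=0$), plus whatever $-C^t$-type entries survive in the rows indexed $r+1,\dots,2r$. The positive and negative parts of this column, applied via the mutation formula $\mu_\al(x_\al) = x_\al^{-1}(\prod_j x_j^{[B^{(1)}_{j\al}]_+} + \prod_j x_j^{[-B^{(1)}_{j\al}]_+})$, must reproduce the $Q$-system evolution $\tQ_{\al,2k} \mapsto \tQ_{\al,2k+2}$. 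This is the key point: one identifies the monomial $\prod_\beta \tQ_{\beta,\cdot}^{\cdots}$ arising from the $B^{(1)}$ column with $\tQ_{\al,2k+1}^2$ (the ``$Q_{\al,k}^2$'' term) and the other with $\prod_{\beta\sim\al}\widetilde{\cT}^{(\al,\beta)}$, using \eqref{odd}–\eqref{otherwise} — note in particular that the vanishing entry $B^{(1)}_{\sh+r,\lo}=0$ is exactly what makes the term $\tQ_{\sh,\cdot}$ enter to the correct power (consistent with $\widetilde{\cT}^{(\lo,\sh)}_k=\tQ_{\sh,2k}$ carrying $\tQ_{\sh}$ to the first power, not the second). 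After the mutation the long-root entries become $\tQ_{\al,2k+2} = \tQ_{\al, 2t_\al k + 2}$ (since $t_\al=1$ for long roots), while short-root and primed entries are unchanged; this matches the stated $\bx[k^{(2)}]$.

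Next I would compute $B^{(2)} = \mu^{(\Pi_>)}(B^{(1)})$ block by block, using \eqref{commutingcompound} with $i,j$ the two indices and the summation running over $\al\in\Pi_>$. The rows and columns indexed by long roots simply flip sign, so the long-root part of the upper-left block goes from $-A$ back to $+A$ in those entries (and $A$'s only nonzero entries $A_{\lo\sh}, A_{\sh\lo}$ sit in a long row/column, so the whole block returns to $A$). For the off-diagonal blocks one tracks the single nonzero ``interaction'' $B^{(1)}_{\sh,\lo}$-type entry and the $D$-entries: the sum $\sum_{\al\in\Pi_>}\sgn(B^{(1)}_{k\al})[B^{(1)}_{k\al}B^{(1)}_{\al l}]_+$ produces the correction that turns $-D^t$ into $C-A$ in the upper-right block (and $D$ into $-C^t-A$ in the lower-left, by skew-symmetry), while the lower-right block picks up a further $+2A$ shift from $2A$, landing at... wait — it stays $2A$? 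I would check carefully: the claimed $B^{(2)}$ has lower-right block $2A$, same as $B^{(1)}$, which is consistent because mutating along long-root indices $\al$ only affects entries $B_{kl}$ with $k$ or $l$ a long root, and the lower-right block is indexed entirely by $\{r+1,\dots,2r\}$, disjoint from $\Pi_>\subset I_r$; so that block is genuinely unchanged, and the correction terms all vanish there.

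The main obstacle I expect is the bookkeeping in the off-diagonal blocks: correctly determining, for each pair $(i,j)$ with exactly one of them a long root, the sign $\sgn(B^{(1)}_{i\al})$ and the product $[B^{(1)}_{i\al}B^{(1)}_{\al j}]_+$ summed over intermediate long roots $\al$, and checking that the result assembles precisely into $C-A$ rather than some matrix that merely agrees with it on most entries. The subtlety is concentrated near the $(\lo,\sh)$ pair, where $A$, $C$, and the doubled entries $B^{(1)}_{\sh+r,\sh}=2$ all interact; everything away from the $\lo$–$\sh$ neighborhood reduces to the simply-laced computation already done in Theorem \ref{simple}. I would organize this as a short case analysis (both indices long; one long one short with neither equal to $\lo,\sh$; the $\lo,\sh$ exceptional cases) to make the verification transparent.
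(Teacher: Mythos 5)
Your overall strategy is the same as the paper's: identify the long-root columns of $B^{(1)}$ with the $Q$-system evolutions $\tQ_{\al,2k}\mapsto\tQ_{\al,2k+2}$ for $\al\in\Pi_>$, then compute $B^{(2)}$ block by block from the compound-mutation formula \eqref{commutingcompound}. Most of this is sound, but your justification for the lower-right block contains a genuine error. You assert that ``mutating along long-root indices $\al$ only affects entries $B_{kl}$ with $k$ or $l$ a long root,'' and conclude that the $2A$ block, being indexed entirely by $\{r+1,\dots,2r\}$, is untouched and that ``the correction terms all vanish there.'' That is not how mutations work: $\mu_\al$ changes \emph{every} entry $B_{kl}$ with $k,l\neq\al$ by $\sgn(B_{k\al})[B_{k\al}B_{\al l}]_+$, which can be nonzero even when neither $k$ nor $l$ is $\al$. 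And here it \emph{is} nonzero: for $i,j\in\Pi_>$ adjacent long roots, the intermediate index $\al=i$ contributes $\sgn(B^{(1)}_{i+r,i})[B^{(1)}_{i+r,i}B^{(1)}_{i,j+r}]_+=\sgn(2)\,[2\cdot(-C_{ji})]_+=2|C_{ij}|\neq 0$ to $B^{(2)}_{i+r,j+r}$, since $B^{(1)}_{i+r,i}=D_{ii}=2$ and $B^{(1)}_{i,j+r}=-D_{ji}=-C_{ji}>0$. The block nonetheless remains $2A$ because this term cancels against the contribution of the intermediate index $\al=j$, namely $\sgn(C_{ij})[C_{ij}\cdot(-2)]_+=-2|C_{ij}|$, using $C_{ij}=C_{ji}$ for two long roots; all remaining terms vanish individually. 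The paper's proof records precisely this identity, $B^{(2)}_{i+r,j+r}=2A_{ij}+\sum_{\al\in\Pi_>}\sgn(B^{(1)}_{i+r,\al})[B^{(1)}_{i+r,\al}B^{(1)}_{\al,j+r}]_+=2A_{ij}$, where the sum is zero by cancellation, not term by term. As written, your argument for that block would fail the ``careful check'' you propose; you need to supply the cancellation.

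A smaller imprecision: in verifying the cluster variables you attribute the fact that $\tQ_{\sh,\cdot}$ enters the exchange monomial to the first power to the vanishing of $B^{(1)}_{\sh+r,\lo}$. In fact $B^{(1)}_{\sh+r,\lo}=0$ removes the \emph{odd} short variable $\tQ_{\sh,4k+1}$ from the exchange relation altogether, while the factor $\tQ_{\sh,4k+2}=\widetilde{\cT}^{(\lo,\sh)}_{2k+1}$ enters to the first power through the entry $B^{(1)}_{\sh,\lo}=-A_{\sh,\lo}=-1$ in the upper-left block (this is exactly why the short roots had to be mutated before the long ones). This does not affect the conclusion but conflates two different rows of the column you are reading off.
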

\begin{proof}
Again, by inspection, the columns corresponding to long roots in the
matrix $B^{(1)}$ correspond to the evolution of the $Q$-system
according to Equations \eqref{shlo} and \eqref{otherwise}. This proves
the first statement in the Lemma.

We look at the evolution of the matrix $B^{(1)}$ under $\mu^{(\Pi_>)}$.
\begin{itemize}
\item If $\al\in \Pi_>$ then $B_{i,\al}^{(2)} =
  -B_{i,\al}^{(1)}$. This accounts for the appearance of $A$ in the
  upper left-hand block of $B^{(2)}$ and for the columns
  corresponding to the long roots in the lower left-hand quadrant of
  $B^{(2)}$. 
\item If $\al\in \Pi_<$ and $i>r$ then
$$
B_{i,\al}^{(2)} = B_{i,\al}^{(1)} + \sum_{\beta\in \Pi_>}
\sgn(B_{i,\beta}^{(1)}) [B_{i,\beta}^{(1)}B_{\beta,\al}^{(1)}]_+ 
$$
The summation has a contribution only when $(\beta,\al)=(\lo,\sh)$ in
which case $B_{\lo,\sh}^{(1)}=1$. Then the only non-vanishing
contribution occurs if $i=\beta+r$, in which case
$B_{\beta+r,\beta}^{(1)}=2$ and $B_{\lo+r,\sh} = 3$. This gives the
matrix in the lower left quadrant of $B^{(2)}$.
\item Finally we have
$$
B^{(2)}_{i+r,j+r} = 2A_{i,j} + \sum_{\al\in
  \Pi_>}\sgn(B_{i+r,\al}^{(1)})[B_{i+r,\al}^{(1)} B_{\al,j+r}^{(1)}]_+
= 2A_{ij}.
$$
\end{itemize}
\end{proof}

At this point, it is evident that the mutations $\{\mu_{\al+r}:~\al\in
\Pi_<\}$ commute among themselves, due to the form of the matrix $2A$
appearing in the lower right quadrant of $B^{(2)}$. Moreover, the
corresponding 
columns of $B^{(2)}$ give the $Q$-system evolution for
$\tQ_{\al,2k+1}\mapsto \tQ_{\al,2k+3}$ if $\al$ is
short. 
As a result we have
\begin{lemma}
\begin{enumerate}
\item The cluster variables at $k^{(3)}$ are
  $\mu^{(\Pi_<')}(\bx[k^{(2)}])=\bx[k^{(3)}]=(x_1,...,x_i)$ 
  where
$$
x_{\al+r} = \tQ_{\al,4k+3},\qquad \al\in \Pi_<,
$$
and the other cluster variables are unchanged from $\bx[k^{(2)}]$.
\item The mutation matrix $B^{(3)}=-B^{(1)}$
\end{enumerate}
\end{lemma}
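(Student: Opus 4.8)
The plan is to follow the template used for the three preceding lemmas: first exhibit the commuting mutations making up $\mu^{(\Pi_<')}$ and read off their effect on the cluster variables from the exchange relations, then compute the mutated matrix $B^{(3)}$ block by block via \eqref{commutingcompound}, checking that it coincides with $-B^{(1)}$.

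For part (1) I would argue as follows. Since the lower-right block of $B^{(2)}$ is $2A$ and $A$ has only the two nonzero entries $A_{\lo,\sh}=-1$, $A_{\sh,\lo}=1$ with $\lo\notin\Pi_<$, the submatrix $(B^{(2)}_{\al+r,\beta+r})_{\al,\beta\in\Pi_<}$ vanishes, so the mutations $\mu_{\al+r}$ ($\al\in\Pi_<$) pairwise commute and $\mu^{(\Pi_<')}$ is well defined. As already observed just before the Lemma, for a short root $\al$ (so $t_\al=2$) the column $\al+r$ of $B^{(2)}$ encodes the normalized $Q$-system relation \eqref{tQsystem} at index $4k+2$, which advances $x_{\al+r}=\tQ_{\al,4k+1}$ to $\tQ_{\al,4k+3}$. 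I would then check that every factor this relation needs is present among the entries of $\bx[k^{(2)}]$: the term $x_\al=\tQ_{\al,4k+2}$, the short-neighbor variables $\tQ_{\beta,4k+2}=x_\beta$ (from \eqref{otherwise}), and, when $\al=\sh$, the factor $\tQ_{\lo,2k+1}^2=x_{\lo+r}^2$ (from \eqref{even}). Since each $\mu_{\al+r}$ fixes all other coordinates and these mutations commute, $\mu^{(\Pi_<')}$ replaces $x_{\al+r}$ by $\tQ_{\al,4k+3}$ for every short $\al$ and leaves $\bx[k^{(2)}]$ otherwise unchanged.

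For part (2) I would compute $B^{(3)}=\mu^{(\Pi_<')}(B^{(2)})$ using the commuting-compound-mutation rule: entries with an index in $\Pi_<'$ merely change sign -- this already yields the short-root rows of $-D$, the short-root columns of $D^t$, and the (vanishing) $\Pi_<'\times\Pi_<'$ corner of $-2A$ -- while for the remaining entries $B^{(3)}_{ij}=B^{(2)}_{ij}+\sum_{\al\in\Pi_<}\sgn(B^{(2)}_{i,\al+r})[B^{(2)}_{i,\al+r}B^{(2)}_{\al+r,j}]_+$. The key point is that almost every correction term vanishes, on account of the two cancellations $(C-A)_{\lo,\sh}=C_{\lo,\sh}-A_{\lo,\sh}=0$ and $(-C^t-A)_{\sh,\lo}=-C_{\lo,\sh}-A_{\sh,\lo}=0$, which use $C_{\lo,\sh}=-1$ and $C_{\sh,\lo}=-2$ (read off from \eqref{even}--\eqref{shlo}); the only corrections through the index $\sh+r$ that survive sit at the positions $(\lo+r,\lo+r)$ and $(\lo+r,\sh)$, and there a short direct computation (with $B^{(2)}_{\lo+r,\sh+r}=-2$, $B^{(2)}_{\sh+r,\lo+r}=2$) produces the entries $0$ and $-1=-D_{\lo,\sh}$ of $-B^{(1)}$. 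Together with the long-root columns, which are untouched and reproduce $-D$ exactly as in the $k^{(1)}$ and $k^{(2)}$ lemmas, this gives $B^{(3)}=-B^{(1)}$.

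The main obstacle is the bookkeeping in part (2) at the $(\lo,\sh)$ corner: everywhere else the entries either flip sign or are left fixed because the relevant $B^{(2)}$ entries vanish by the two cancellations above, so this corner -- where $A$ is nonzero and two distinct correction terms interact with the off-diagonal $2A$ block -- is the only place where the verification has real content. I expect nothing there beyond careful arithmetic, exactly mirroring the analogous corner computations already carried out for $B^{(1)}$ and $B^{(2)}$.
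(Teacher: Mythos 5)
Your overall strategy coincides with the paper's: the paper gives no details here, stating only that the proof is ``by a calculation analogous to the previous two lemmas,'' and your plan of reading part (1) off the columns of $B^{(2)}$ and computing $B^{(3)}$ block by block via \eqref{commutingcompound} is exactly that calculation. Part (1) is correct and complete: the $\Pi_<'\times\Pi_<'$ corner of $2A$ vanishes, so the mutations commute, and you correctly match column $\al+r$ of $B^{(2)}$ with the relation \eqref{tQsystem} at index $4k+2$, including the factor $\tQ_{\lo,2k+1}^2=x_{\lo+r}^2$ coming from \eqref{even}.

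Part (2), however, has a gap in the $\Pi_<\times\Pi_<$ corner of the upper-left block. Your claim that outside the $(\lo,\sh)$ corner every correction term vanishes individually, by virtue of $(C-A)_{\lo,\sh}=0$ and $(-C^t-A)_{\sh,\lo}=0$, fails whenever $\g$ has two adjacent short roots, i.e.\ for $C_r$ with $r\ge 3$ and for $F_4$. For adjacent short roots $\al\sim\beta$, the correction to $B^{(3)}_{\beta,\al}$ receives the term $\sgn(2)\,[2\cdot 1]_+=+2$ through the index $\beta+r$ (using $B^{(2)}_{\beta,\beta+r}=2$ and $B^{(2)}_{\beta+r,\al}=-C_{\al\beta}=1$) and the term $\sgn(-1)\,[(-1)(-2)]_+=-2$ through the index $\al+r$ (using $B^{(2)}_{\beta,\al+r}=C_{\beta\al}=-1$ and $B^{(2)}_{\al+r,\al}=-2$); these are individually nonzero and cancel only in the sum. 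Likewise, your list of surviving corrections through $\sh+r$ omits the positions $(i,\sh)$ and $(\sh,j)$ for short $i,j\sim\sh$, which contribute $\mp 2$ and are cancelled by the corrections routed through $i+r$ and $j+r$. The conclusion $B^{(3)}=-B^{(1)}$ is still correct, but as written your verification really only covers $B_r$ (a single short root); to close the gap you need the pairwise-cancellation argument for the short--short entries rather than the term-by-term vanishing you invoke.
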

The proof is by a calculation analogous to the previous two lemmas. 

The columns $\{\al:~\al\in \Pi_<\}$ of $-B^{(1)}$ describe the
mutations corresponding to the evolutions
$\tQ_{\al,4k+2}\to\tQ_{\al,4k+4}$ as they are identical to the corresponding
columns in the matrix $B$. The mutations corresponding to the short
roots again commute. We have the compound evolution
$\mu^{(\Pi_<)}$
which brings us to the node $k^{(4)}$, with
\begin{lemma}
\begin{enumerate}
\item The cluster variables at the point $k^{(4)}$ are
  $\mu^{(\Pi_<)}(\bx[k^{(3)}])=\bx[k^{(4)}]=(x_1,...,x_{2r})$, 
  with
$$
x_\al = \left\{ \begin{array}{ll} \tQ_{\al,2t_\al (k + 2)}, & \al\leq r \\
\tQ_{\al, 2 k + 1}, & \al-r\in \Pi_> \\
\tQ_{\al, 2k+3}, & \al-r \in \Pi_<. \end{array}\right.
$$
\item The mutation matrix at the node $k^{(4)}$ is $B^{(4)}=-B.$
\end{enumerate}
\end{lemma}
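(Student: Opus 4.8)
The plan is to imitate the pattern of the preceding three lemmas: take the seed $(\bx[k^{(3)}],B^{(3)})$ with $B^{(3)}=-B^{(1)}$ provided by the previous lemma, and analyse the compound mutation $\mu^{(\Pi_<)}$ applied to it.

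For part (1) I would first note that the columns of $B^{(3)}=-B^{(1)}$ indexed by $\al\in\Pi_<$ coincide with the corresponding columns of the original matrix $B$: the upper blocks agree (both equal $A$), and in the lower block $-D_{i,\al}=C_{i,\al}$ exactly when $\al$ is a short root, by the definition of $D$. Since the only nonzero entries of $A$ are $A_{\lo,\sh}=-1$ and $A_{\sh,\lo}=1$, involving the long root $\lo\notin\Pi_<$, the mutations $\{\mu_\al:\al\in\Pi_<\}$ commute pairwise, so $\mu^{(\Pi_<)}$ makes sense. As these columns are identical to the short-root columns at node $k$, where they realise the forward $Q$-system evolution $\tQ_{\al,2t_\al k}\mapsto\tQ_{\al,2t_\al k+2}$ for short $\al$, and as all the neighbouring variables required by \eqref{odd} and \eqref{otherwise} (the $\tQ_{\al,4k+3}$ at index $\al+r$, the $\tQ_{\beta,4k+3}$ for the short neighbours $\beta$ of $\al$, and $\tQ_{\lo,2k+1},\tQ_{\lo,2k+2}$ at indices $\lo+r$ and $\lo$) occur in $\bx[k^{(3)}]$, the mutation $\mu_\al$ sends $\tQ_{\al,4k+2}$ to $\tQ_{\al,4k+4}$. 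All entries with index $>r$, and all long-root entries with index $\le r$, are untouched by $\mu^{(\Pi_<)}$; reading off which coordinates changed gives $\bx[k^{(4)}]$.

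For part (2) I would compute $B^{(4)}=\mu^{(\Pi_<)}(B^{(3)})$ from the commuting compound-mutation rule \eqref{commutingcompound}. The rows and columns meeting $\Pi_<$ --- which, because $\sh\in\Pi_<$, include the two nonzero off-diagonal entries of $A$ --- merely change sign, producing the block $-A$ and all the short-root columns of $B^{(4)}$. Every remaining entry $B^{(4)}_{i\al}$ (with $i,\al\notin\Pi_<$) picks up the correction $\sum_{\beta\in\Pi_<}\sgn(B^{(3)}_{i\beta})[B^{(3)}_{i\beta}B^{(3)}_{\beta\al}]_+$; the surviving contributions are controlled by the single entry $A_{\sh,\lo}=1$ and by the ``doubled'' entry $B^{(3)}_{\sh+r,\sh}=C_{\sh,\sh}=2$ forced by $t_\sh=2$, and their net effect is to cancel the $-2A$ in the lower-right block and to turn $B^{(3)}_{\sh+r,\lo}=0$ into $-C_{\sh,\lo}$. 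Assembling the cases yields $B^{(4)}=-B$, which is exactly the matrix from which the last compound mutation $\mu^{(\Pi')}$ returns the seed to $(\bx[k+1],B)$, closing one period of $\cG_\g$.

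The step I expect to be the main obstacle is the matrix computation in part (2): because $\mu^{(\Pi_<)}$ acts in all short directions at once, and $|\Pi_<|>1$ for $C_r$ and $F_4$, one must check that the corrections coming from distinct $\beta\in\Pi_<$ do not interfere, and that the interaction between the nonsymmetry $A_{\sh,\lo}=-A_{\lo,\sh}=1$, the Cartan asymmetry $C_{\sh,\lo}\ne C_{\lo,\sh}$, and the doubling $t_\sh=2$ reproduces $-B$ exactly and nothing more. This is, however, a finite verification of precisely the same kind as those already carried out for $B^{(1)}$, $B^{(2)}$ and $B^{(3)}$.
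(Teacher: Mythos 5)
Your proposal is correct and takes essentially the same route as the paper, which for this lemma offers no details beyond ``the proof is analogous to that of the previous lemmas'': your column-by-column verification is exactly that calculation, the identification of the short-root columns of $B^{(3)}=-B^{(1)}$ with those of $B$ is right, and the cancellations you flag as the main obstacle (the two contributions $\al=i$ and $\al=j$ to the entry $B^{(4)}_{i+r,j+r}$, which together kill the $-2A$ block) do work out. Two minor corrections: the entry $B^{(3)}_{\sh+r,\sh}=2$ is simply the diagonal Cartan entry $C_{\sh,\sh}=2$ and has nothing to do with $t_\sh=2$; and the formula $\tQ_{\al,2t_\al(k+2)}$ in the statement is a typo for $\tQ_{\al,2t_\al(k+1)}$ --- the values your computation actually produces ($\tQ_{\al,4k+4}$ for $\al$ short, $\tQ_{\al,2k+2}$ for $\al$ long) are the correct ones, being the only ones consistent with $\mu^{(\Pi')}(\bx[k^{(4)}])=\bx[k+1]$ in the following lemma.
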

Again, the proof is analogous to that of the previous lemmas.

Finally we note that at the node $k^{(4)}$, not only do the mutations
$\{\mu_{\al+r}:\al\in \Pi\}$ all commute among themselves, the
corresponding columns of $B^{(4)}$ describe the evolution of the
$Q$-system for any $\tQ_{\al,i}$ with $i$ odd. Therefore the
compound evolution $\mu^{(\Pi')}$ makes sense at this node.  The node
$k^{(5)}$ is reached from node $k^{(4)}$ by acting with
$\mu^{(\Pi')}$. In fact, we find that the node $k^{(5)}$ is the node
$k+1$ in $\cG_\g$:
\begin{lemma}
\begin{enumerate}
\item The cluster variables $\bx[k^{(5)}]=\mu^{(\Pi')}(\bx[k^{(4)}]) =
  \bx[k+1]$,
  where $\bx[k]$ for any $k\in\Z$ was defined in equation
  \eqref{clustervariable}. 
\item The mutation matrix at the node $k^{(5)}$ is $B$.
\end{enumerate}
\end{lemma}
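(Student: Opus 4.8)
Looking at this final lemma, I need to prove that the compound mutation $\mu^{(\Pi')}$ applied to the seed at node $k^{(4)}$ yields precisely the seed at node $k+1$.

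\medskip

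\noindent\textbf{Proof proposal.} The plan is to follow exactly the template established in the previous five lemmas, which is a direct (if tedious) computation. First I would verify statement (1) about the cluster variables. At node $k^{(4)}$, the matrix is $B^{(4)}=-B$; by \eqref{Bmatrix} the lower-right block of $-B$ is $0$ and the lower-left block is $-C$, while the upper-right block is $C^t$. Hence for $\al-r\in\Pi'$ the mutation $\mu_{\al}$ acts on $x_\al$ by the rule $\mu_k(x_k)=x_k^{-1}(\prod_j x_j^{[B_{jk}]_+}+\prod_j x_j^{[-B_{jk}]_+})$ with $B$ replaced by $-B$. I would split into the short/long cases for $\al-r$: for $\al-r\in\Pi_<$ the current variable is $x_\al=\tQ_{\al-r,2k+3}$ and the neighbours entering the product come from the $C^t$ block (the variables $\tQ_{\beta,2t_\beta(k+2)}$ on the $\al\le r$ side), and one checks via \eqref{odd} (using $\tQ_{\sh,\,\text{even}}$ and $\tQ_{\sh,\,\text{odd}}$ adjacency) and \eqref{otherwise} that the mutation is the $Q$-system evolution $\tQ_{\al-r,2k+3}\mapsto \tQ_{\al-r,2k+5}$; for $\al-r\in\Pi_>$ the current variable is $\tQ_{\al-r,2k+1}$ and the evolution is $\tQ_{\al-r,2k+1}\mapsto\tQ_{\al-r,2k+3}$, using \eqref{shlo} and \eqref{otherwise} for the long-root rows. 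In every case the image is again an element of $\cA$, and comparing with \eqref{clustervariable} at argument $k+1$ shows that the resulting vector is exactly $\bx[k+1]$ (the unprimed $x_\al$ for $\al\le r$ are already $\tQ_{\al,2t_\al(k+2)}=\tQ_{\al,2t_\al\cdot(k+1)+2t_\al\cdot 0}$... one must be careful: $2t_\al(k+2)$ should equal $2t_\al((k+1))$ evaluated correctly — here I would double-check the index bookkeeping, since $\bx[k+1]$ wants $\tQ_{\al,2t_\al(k+1)}$ for $\al\le r$, so in fact the unprimed entries of $k^{(4)}$ are already the primed entries of $k+1$ shifted, matching the periodicity).

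\medskip

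For statement (2), that $B^{(5)}=B$, I would use the commuting-mutation compound formula \eqref{commutingcompound}. Since at node $k^{(4)}$ we have $B^{(4)}=-B$ and the indices in $\Pi'$ mutually commute (because the lower-right block of $-B$ is $0$, as noted), each $\mu_{\al}$ with $\al\in\Pi'$ flips the sign of row and column $\al$, i.e.\ flips the entire lower block rows/columns. The off-block entries change according to $B_{kl}\mapsto B_{kl}+\sum_{\al\in\Pi'}\sgn(B_{k\al})[B_{k\al}B_{\al l}]_+$. Here the relevant products $B_{k\al}B_{\al l}$ for $k,l\le r$ and $\al>r$ involve the blocks $C$ (rows $\al$, columns $\le r$) and $-C^t$ (rows $\le r$, columns $>r$) of $-B$, i.e.\ $B^{(4)}_{k\al}=(C^t)_{k,\al-r}$ and $B^{(4)}_{\al l}=-C_{\al-r,l}$; one checks the correction term vanishes or reproduces $A=C^t-C$ exactly as in the analogous step of the proof of Lemma for $B^{(1)}$, and the sign flips restore the upper-right block to $-C^t$ and the lower-left to $C$. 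The net effect is $B^{(5)}=B$, completing the proof.

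\medskip

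\noindent\textbf{Main obstacle.} The computation itself is routine given the pattern of the preceding lemmas; the one genuine subtlety is the index/periodicity bookkeeping in statement (1) — verifying that the three-way case split for $x_\al$ at $k^{(4)}$ collapses precisely onto the two-way definition \eqref{clustervariable} of $\bx[k+1]$, in particular checking that the short-root entries $\tQ_{\al,2t_\al(k+2)}$ and $\tQ_{\al,2k+3}$ recombine correctly with $t_\al\in\{2,3\}$ so that after $\mu^{(\Pi')}$ one lands on $\tQ_{\al,2t_\al(k+1)}$ and $\tQ_{\al,2t_\al(k+1)+1}$. I would handle this by treating the short root $\sh$ and the long root $\lo$ explicitly (they are the only indices where $A$ and the floor function in \eqref{T} interact nontrivially) and invoking \eqref{odd}--\eqref{shlo}, with all other roots reducing to the simply-laced computation of Theorem \ref{simple}.
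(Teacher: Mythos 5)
Your proposal is correct and takes essentially the same route as the paper, whose proof of this lemma is just the direct calculation templated by the preceding five lemmas: verify that the $\Pi'$-columns of $B^{(4)}=-B$ encode the odd-index $Q$-system evolutions, and compute $\mu^{(\Pi')}(-B)=B$ via the compound mutation rule (the correction to the upper-left block is $+2A$, turning $-A$ into $A$). Two small repairs: the short-root odd mutation is $\tQ_{\sh,4k+3}\mapsto\tQ_{\sh,4k+5}$ and is governed by \eqref{even} (middle index $4k+4$ is even), not \eqref{odd}; and the index mismatch you rightly flagged is a typo in the paper's $k^{(4)}$ lemma, whose entries should read $\tQ_{\al,2t_\al(k+1)}$ for $\al\le r$ and $\tQ_{\al-r,4k+3}$ for the odd short entries, after which the result of $\mu^{(\Pi')}$ matches \eqref{clustervariable} at $k+1$ exactly.
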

The proof is by direct calculation. Thus, there is a periodicity to the
graph $\cG_\g$, where the matrix $B^{(i)}$ is repeated along the
``band'' corresponding to the subgraph every fifth step.

\subsection{The $Q$-system for $G_2$ as a cluster algebra}
In this case the normalized $Q$-system \eqref{tQsystem} has terms
$\widetilde{\cT}_{\al,k}$ as follows:
\begin{eqnarray*}
\tcT_{k}^{(1,2)} &=& \tQ_{2,3k} \\
\tcT_{3k}^{(2,1)} &=& \tQ_{1,k}^3 \\
\tcT_{3k+1}^{(2,1)} &=& \tQ_{1,k}^2 \tQ_{1,k+1} \\
\tcT_{3k+2}^{(2,1)} &=& \tQ_{1,k}\tQ_{1,k+1}^2.
\end{eqnarray*}

Again we will describe a graph $\cG_{G_2}$, a quotient graph of
a subgraph of $\mathbb T_4$, which contains at its nodes cluster
variables with all the normalized characters $\{\tQ_{\al,k}:~\al\in
I_2,~ k\in\Z_+\}$.

The graph $\cG_{G_2}$ contains all nodes labeled by $k\in \Z_+$, and
also the nodes $k^{(i)}$ with $i=1,...,6$. At these nodes we have
cluster variables $\bx[k^{(k)}]$ and mutation matrices $B^{(i)}$.
We will describe the
clusters at those nodes explicitly. As in the other cases, we choose
to evolve along nodes of the graph when the corresponding mutations
describe one of the $Q$-system equations. The graph $\cG_{G_2}$ is the
following graph:

\begin{center}
\psset{unit=1mm,linewidth=.4mm,dimen=middle}
\begin{pspicture}(0,0)(120,20)
\rput(-5,10){$\cdots$}
\rput(125,10){$\cdots$}
\multips(0,0)(20,0){2}{\psline(0,10)(20,10)}
\psline(40,10)(50,20)(60,10)(50,0)(40,10)
\psline(60,10)(100,10)
\psline(100,10)(110,20)(120,10)(110,0)(100,10)
\multips(0,0)(20,0){7}{\pscircle*[linecolor=red](0,10){1}}
\pscircle*[linecolor=red](50,20){1}
\pscircle*[linecolor=red](50,0){1}
\pscircle*[linecolor=red](110,20){1}
\pscircle*[linecolor=red](110,0){1}
\rput(0,7){$k$}
\rput(20,7){$k^{(1)}$}
\rput(39,7){$k^{(2)}$}
\rput(61,7){$k^{(3)}$}
\rput(80,7){$k^{(4)}$}
\rput(99,7){$k^{(5)}$}
\rput(122,7){$k+1$}
\rput(10,12){${\blue 2}$}
\rput(30,12){${\blue 4}$}
\rput(44,17){${\blue 2}$}
\rput(46,7){${\blue 1}$}
\rput(56,17){${\blue 1}$}
\rput(54,7){${\blue 2}$}
\rput(70,12){${\blue 4}$}
\rput(90,12){${\blue 2}$}
\rput(106,7){${\blue 3}$}
\rput(104,17){${\blue 4}$}
\rput(116,17){${\blue 3}$}
\rput(114,7){${\blue 4}$}
\end{pspicture}
\end{center}

With the notation of the previous subsection, we can also depict this
graph as follows:
\begin{center}
\psset{unit=2mm,linewidth=.4mm,dimen=middle}
\begin{pspicture}(0,0)(70,10)
\psline(5,5)(65,5)
\multips(5,5)(10,0){7}{\pscircle*[linecolor=red](0,0){.7}}
\rput(5,3){$k$}
\rput(15,3){$k^{(1)}$}
\rput(25,3){$k^{(2)}$}
\rput(35,3){$k^{(3)}$}
\rput(45,3){$k^{(4)}$}
\rput(55,3){$k^{(5)}$}
\rput(65,3){$k+1$}
\rput(3,5){$\cdots$}
\rput(68,5){$\cdots$}
\rput(10,7){${\Pi_<}$}
\rput(20,7){${\Pi_<'}$}
\rput(30,7){${\Pi}$}
\rput(40,7){${\Pi_<'}$}
\rput(50,7){${\Pi_<}$}
\rput(60,7){${\Pi'}$}
\end{pspicture}
\end{center}

The following can be easily verified by explicit calculation:
\begin{thm}
The mutations along the graph $\cG_{G_2}$ describe
the evolution of the $Q$-system of $G_2$, the
cluster variables at each point $k^{(i)}$ are
defined as follows:
\begin{eqnarray*}
\begin{array}{|r|c|c|}\hline
i & \bx[k^{(i)}] & B^{(i)} \\ \hline\hline
0 & (\tQ_{1,2k},\tQ_{2,6k},\tQ_{1,2k+1},\tQ_{2,6k+1}) &
 \left( \begin{array}{rrrr}
0 & -2 & -2 & 3 \\
2 & 0  & 1 & -2 \\
2 & -1 & 0 & 0 \\
-3 & 2 & 0 & 0
\end{array}\right) \\ \hline
1 & (\tQ_{1,2k},\tQ_{2,6k+2},\tQ_{1,2k+1},\tQ_{2,6k+1})&
\left( \begin{array}{rrrr}
0 & 2 & -2 & -1 \\
-2 & 0  & -1 & 2 \\
2 & 1 & 0 & -2 \\
1 & -2 & 2 & 0
\end{array}\right)
\\ \hline
2 & (\tQ_{1,2k},\tQ_{2,6k+2},\tQ_{1,2k+1},\tQ_{2,6k+3})&
 \left( \begin{array}{rr}
0 & -C \\ 
C^t & -A 
\end{array}\right)\\ \hline
3 & (\tQ_{1,2k+2},\tQ_{2,6k+4},\tQ_{1,2k+1},\tQ_{2,6k+3})&
-B^{(2)} \\ \hline
4 & (\tQ_{1,2k+2},\tQ_{2,6k+4},\tQ_{1,2k+1},\tQ_{2,6k+5})&
-B^{(1)} \\ \hline
5 & (\tQ_{1,2k+2},\tQ_{2,6k+6},\tQ_{1,2k+1},\tQ_{2,6k+5})&
 -B \\ \hline
6 & \bx[k+1]=(\tQ_{1,2k+2},\tQ_{2,6k+6},\tQ_{1,2k+3},\tQ_{2,6k+7})&
B \\ \hline
\end{array}
\end{eqnarray*}
\end{thm}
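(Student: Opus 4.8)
The plan is to prove the theorem by the same explicit, node-by-node computation used for $B_r,C_r,F_4$ in the previous subsection, now running along the six-step sequence $\Pi_<,\Pi_<',\Pi,\Pi_<',\Pi_<,\Pi'$. For $G_2$ we have $r=2$ and, since $t_1=1$ and $t_2=3$, node $1$ is the long root and node $2$ the short root; thus $\Pi_<=\{2\}$, $\Pi_<'=\{4\}$, $\Pi=I_2$, $\Pi'=\{3,4\}$, while $C_{12}=-1$, $C_{21}=-3$, and $A=C^t-C$ has $A_{12}=-2$, $A_{21}=2$. With these data, $\bx[k]$ from \eqref{clustervariable} equals $(\tQ_{1,2k},\tQ_{2,6k},\tQ_{1,2k+1},\tQ_{2,6k+1})$ and $B$ from \eqref{Bmatrix} is the matrix in the $i=0$ row of the table, so the base case holds.

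For each edge $k^{(i)}\to k^{(i+1)}$ I would perform three checks. \emph{(a) Admissibility:} read off which off-diagonal entries of $B^{(i)}$ must vanish for the prescribed single or compound mutation to be defined; for the two compound steps ($i=2$ with $\mu^{(\Pi)}=\mu_1\circ\mu_2$, and $i=5$ with $\mu^{(\Pi')}=\mu_3\circ\mu_4$) this amounts to $B^{(2)}_{12}=0$ and $(-B)_{34}=0$, both visible from the block forms of $B^{(2)}$ and of $-B$; this simultaneously justifies replacing each compound mutation by a $2$-simplex (the diamonds in the picture). \emph{(b) Evolution:} apply the mutation rule \eqref{Bmutations} to the relevant entries of $\bx[k^{(i)}]$ and match the output against the corresponding instance of the normalized $Q$-system \eqref{tQsystem}, using the $\tcT$ formulas recorded at the start of this subsection; for instance the first step $\mu_2$ uses column $2$ of $B$, namely $(-2,0,-1,2)^t$, to produce $\tQ_{2,6k}^{-1}\bigl(\tQ_{2,6k+1}^2+\tQ_{1,2k}^2\tQ_{1,2k+1}\bigr)$, which is $\tQ_{2,6k+2}$ because $6k+1=3(2k)+1$ and $\tcT^{(2,1)}_{3(2k)+1}=\tQ_{1,2k}^2\tQ_{1,2k+1}$. \emph{(c) Matrix update:} compute $B^{(i+1)}$ from \eqref{Bmutations}, using \eqref{commutingcompound} for the compound steps. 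The identities $B^{(3)}=-B^{(2)}$, $B^{(4)}=-B^{(1)}$, $B^{(5)}=-B$ and $B^{(6)}=B$ claimed in the table can be obtained quickly from $\mu_j^2=1$ and the elementary fact $\mu_j(-B)=-\mu_j(B)$; e.g. $B^{(5)}=\mu_2(B^{(4)})=\mu_2(-B^{(1)})=-\mu_2(\mu_2(B))=-B$. Running the sequence to the end, the final cluster is $(\tQ_{1,2k+2},\tQ_{2,6k+6},\tQ_{1,2k+3},\tQ_{2,6k+7})$, which is exactly $\bx[k+1]$, and $B^{(6)}=B$, yielding the periodicity of the band and closing the loop.

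I do not expect a conceptual obstacle; the one place that needs care is the index arithmetic. Because $t_2=3$, the factor $\tcT^{(2,1)}_n$ has three distinct shapes according to $n\bmod 3$, so at every $\mu_2$ or $\mu_4$ step one must verify that the index of the short-root variable being mutated lands on precisely the residue for which the monomial produced by \eqref{Bmutations} coincides with $\tcT^{(2,1)}_n$. Carrying the indices $6k,6k+1,6k+2,\dots$ correctly through all six steps, together with the two compound-mutation matrix updates, is the only delicate bookkeeping; since each step is a bounded calculation with a $4\times4$ integer matrix, the verification terminates and establishes both the identification of the mutations with $Q$-system evolutions and the table of seeds.
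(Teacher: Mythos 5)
Your proposal is correct and follows essentially the same route as the paper, which proves the theorem by exactly this kind of direct finite verification: comparing, at each node $k^{(i)}$, the column of $B^{(i)}$ indexed by the outgoing edge label with the corresponding instance of the normalized $Q$-system, and checking commutativity at the two diamonds. Your sample computations (the identification $C_{12}=-1$, $C_{21}=-3$, the $\mu_2$ exchange at node $k$, and the sign identities $\mu_j(-B)=-\mu_j(B)$, $\mu_j^2=1$ used to deduce $B^{(4)}=-B^{(1)}$ and $B^{(5)}=-B$) are all consistent with the table.
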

Note that $k^{(0)}:=k$ and $k^{(6)}=k+1$ as the cluster variables at
that node correspond to the point $k+1$.

To prove this theorem, one simply compares the $Q$-system evolution
along in the direction of the label of the edge emanating from the
node $k^{(i)}$, and the corresponding columns of the mutation matrices
at the points $k^{(i)}$. That is, if the an edge labeled $n$ emanates
from $k^{(i)}$ in the graph $\cG_{G_2}$, then the
$n$th variable of $x[k^{(i)}]$ satisfies the $Q$-system described by
column $n$ of $B^{(i)}$. The ``diamonds'' in $\cG_{G_2}$ occur
when two mutations commute. Since this is a finite system it can be done
by a direct calculation.

\section{Polynomiality phenomenon}\label{poly}
We now consider the specialization of the cluster algebra to the point
$\{Q_{\al,0}=1\}_\al$ (so that $\tQ_{\al,0}=\epsilon_\al$), which 
we call the Kirillov-Reshetikhin point, because solutions to the
$Q$-system with this boundary condition, for $m>0$, are characters of
Kirillov-Reshetikhin modules.

Note that at the KR point, $Q_{\al,-1}=\tQ_{\al,-1}=0$ for all $\al$.
However due to the Laurent phenomenon, which states, among other
things, that all cluster variables are Laurent polynomials in
$\tQ_{\al,0}$ and $\tQ_{\al,1}$, all cluster variables are
well-defined at this point.  They have no singularity since at the KR
point $\tQ_{\al,0}\neq 0$.

In our proof of the combinatorial Kirillov-Reshetikhin conjecture
\cite{DFK}, a crucial ingredient was the following Lemma:
\begin{lemma}\label{polynomiality}
For any algebra $\g$, any solution to the KR Q-system $Q_{\al,m}$ is a
polynomial in the $r$ variables $\{Q_{\al,1}\}$.
\end{lemma}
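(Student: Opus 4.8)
The plan is to bootstrap the (ordinary) Laurent phenomenon for the cluster algebra up to the stronger statement that at the KR point the cluster variables are honest \emph{polynomials} in the $\{Q_{\al,1}\}$ alone. The starting point is the observation, already recorded in Section~\ref{qascluster}, that every $\tQ_{\al,m}$ with $m\ge 0$ appears as a cluster variable at some node of $\cG_\g$, and that $\cG_\g$ sits inside the tree $\mathbb T_{2r}$ whose seed at the node $k=0$ is $(\bx[0],B)$ with $\bx[0]=(\tQ_{1,0},\dots,\tQ_{r,0},\tQ_{1,1},\dots,\tQ_{r,1})$. By the Laurent phenomenon \cite{FZlaurent}, every cluster variable is a Laurent polynomial in these $2r$ variables $\tQ_{\al,0},\tQ_{\al,1}$, with coefficients in $\Z$ (up to the roots of unity $\epsilon_\al$, which are invertible constants and cause no harm). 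Hence $Q_{\al,m}\in \Z[\epsilon^{\pm}][\tQ_{\beta,0}^{\pm 1},\tQ_{\beta,1}^{\pm 1}]$ to begin with; we must (i) remove the denominators in the $\tQ_{\beta,1}$, and (ii) specialize $\tQ_{\beta,0}=\epsilon_\beta$ and show no pole develops.

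Step (ii) is the easy half: at the KR point $\tQ_{\al,0}=\epsilon_\al\ne 0$, so any Laurent polynomial in $\tQ_{\al,0}$ can be evaluated there, and the result lies in $\Z[\epsilon^{\pm}][\tQ_{\beta,1}^{\pm 1}]$. The content of the Lemma is therefore step (i): after this specialization, the $Q_{\al,m}$ have no poles in the $\tQ_{\beta,1}$ either. First I would set up an induction on $m$ using the $Q$-system recursion \eqref{positive} in the normalized form \eqref{tQsystem}. At the KR point $\tQ_{\al,0}=\epsilon_\al$ and $\tQ_{\al,1}$ is a free variable; the base cases $m=0,1$ are trivially polynomial. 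Assume $\tQ_{\al,j}$ is a polynomial in $\{\tQ_{\beta,1}\}$ for all $\al$ and all $j\le m$. From \eqref{tQsystem},
\[
\tQ_{\al,m+1}=\frac{\tQ_{\al,m}^2+\prod_{\beta\sim\al}\tcT^{(\al,\beta)}_m}{\tQ_{\al,m-1}},
\]
and each $\tcT^{(\al,\beta)}_m$ is a product of $\tQ_{\beta,j}$ with $j\le m$ by the floor-function bounds in \eqref{T}, hence a polynomial by the inductive hypothesis. So the numerator is a polynomial; the only obstruction to $\tQ_{\al,m+1}$ being polynomial is the division by $\tQ_{\al,m-1}$. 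On the other hand, the Laurent phenomenon (combined with step (ii)) already tells us $\tQ_{\al,m+1}$ lies in the \emph{Laurent} ring $\Z[\epsilon^{\pm}][\tQ_{\beta,1}^{\pm1}]$. A ratio of a polynomial by $\tQ_{\al,m-1}$ that happens to lie in the Laurent ring is automatically a polynomial \emph{provided} $\tQ_{\al,m-1}$ and the numerator share no common factor involving a variable that the Laurent ring would otherwise allow in a denominator --- but the Laurent ring only allows the single monomial $\prod_\beta \tQ_{\beta,1}^{-N_\beta}$ as denominators, so it suffices to show $\tQ_{\al,m-1}$ does not divide any of the $\tQ_{\beta,1}$, i.e.\ that $\tQ_{\al,m-1}$ is not (a unit times) a monomial in the $\tQ_{\beta,1}$ for $m-1\ge 2$. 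Equivalently: the numerator, as a polynomial, is divisible by $\tQ_{\al,m-1}$, and quotienting a polynomial by a polynomial divisor yields a polynomial.

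The cleanest way to organize this is to argue the divisibility directly: I would show by the same induction that the numerator $\tQ_{\al,m}^2+\prod_{\beta\sim\al}\tcT^{(\al,\beta)}_m$ is divisible, as a polynomial in $\{\tQ_{\beta,1}\}$, by $\tQ_{\al,m-1}$. For this one plays the Laurent property against the polynomial property: since $\tQ_{\al,m+1}$ is in the Laurent ring and equals (polynomial)$/\tQ_{\al,m-1}$, and since $\tQ_{\al,m-1}$ is a polynomial that is not divisible by any $\tQ_{\beta,1}$ (an irreducible element of the polynomial ring that does not appear among the allowed denominator monomials because, for $m-1\ge2$, one checks from the first couple of recursion steps that $\tQ_{\al,m-1}$ has a nonzero constant term or more simply that $\tQ_{\al,2}=\tQ_{\al,1}^2/\epsilon_\al+\cdots$ is not a monomial in $\{\tQ_{\beta,1}\}$), the denominator $\tQ_{\al,m-1}$ must in fact divide the numerator inside the polynomial ring. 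Then $\tQ_{\al,m+1}$ is a polynomial, closing the induction. The analogous statement for the un-normalized $Q_{\al,m}$ follows by clearing the $\epsilon_\al$, which are units.

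\textbf{Main obstacle.} The crux, and the only genuinely non-formal point, is showing that the polynomial $\tQ_{\al,m-1}$ (for $m-1\ge 2$) cannot be absorbed into the allowed Laurent denominator, i.e.\ that it is not a monomial in the $\{\tQ_{\beta,1}\}$ up to a unit; once that is known, the Laurent phenomenon forces the numerator to be divisible by $\tQ_{\al,m-1}$ in the polynomial ring and polynomiality is immediate. I would establish this non-monomiality either by a short direct computation of $\tQ_{\al,2}$ and $\tQ_{\al,3}$ at the KR point (exhibiting at least two distinct monomials, or a genuine $+$ sign), or by a degree/positivity argument: at the KR point the recursion \eqref{tQsystem} has a manifestly subtraction-free (all-plus) form, so every $\tQ_{\al,m}$ for $m\ge 1$ is a polynomial with nonnegative integer (times $\epsilon$) coefficients and strictly more than one monomial for $m\ge 2$, hence never a monomial. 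This subtraction-freeness is exactly what Appendix A of the paper is arranged to supply, so I would invoke it here.
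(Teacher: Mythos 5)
There is a genuine gap, and it sits exactly at the one point you flag as the ``main obstacle.'' Your induction reduces everything to the claim that, because $\tQ_{\al,m+1}$ is a Laurent polynomial in the $\{\tQ_{\beta,1}\}$ after specializing $\tQ_{\beta,0}=\epsilon_\beta$, the divisor $\tQ_{\al,m-1}$ must divide the numerator in the polynomial ring. But the ordinary Laurent phenomenon at the initial cluster permits arbitrary \emph{monomials} in the $\tQ_{\beta,1}$ as denominators --- ruling those out is precisely the content of the Lemma, so it cannot also be an input. Concretely: (a) the first nontrivial step of your induction computes $\tQ_{\al,3}=(\tQ_{\al,2}^2+\prod\tcT_2)/\tQ_{\al,1}$, whose divisor \emph{is} a monomial, so your criterion says nothing and the induction stalls immediately; (b) the correct criterion for later steps is not ``$\tQ_{\al,m-1}$ is not a monomial'' but ``no variable $\tQ_{\beta,1}$ divides $\tQ_{\al,m-1}$,'' and this stronger statement is false: already for $A_1$ one has $Q_{1,3}=Q_{1,1}^3-2Q_{1,1}$ at the KR point, which is divisible by $Q_{1,1}$ and has zero constant term, so when you divide by it to form $Q_{1,5}$ the Laurent property only forces $(Q_{1,1}^2-2)$ into the numerator, not the extra factor of $Q_{1,1}$. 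Your proposed repair via positivity does not help either ($t^3+2t=t(t^2+2)$ has several monomials with positive coefficients and still has a monomial factor), and positivity itself is doubtful after the specialization $\tQ_{\beta,0}=\epsilon_\beta$ to complex roots of unity.

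The paper's algebraic argument avoids this entirely by applying the Laurent phenomenon at the \emph{neighboring} cluster rather than the initial one: writing a cluster variable as $\sum_\bn C_\bn(\bb)\ba^\bn$ with $\ba=\{Q_{\beta,1}\}$, $\bb=\{Q_{\beta,0}\}$, one substitutes $a_j=N_j(\bb)/a_j'$ where $a_j'=\mu_j(a_j)$ and $N_j(\bb)$ depends on $\bb$ only (this uses the block of zeros in $B$). Laurentness in the cluster $(\ba',\bb)$ forbids the non-monomial polynomials $N_j(\bb)$ from surviving in denominators, so $C_\bn(\bb)$ is divisible by $N_j(\bb)^{-n_j}$ for each $n_j<0$; since the KR point is by definition the locus $N_j(\bb)=0$, every term with a negative power of some $Q_{j,1}$ vanishes there. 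The key divisibility is thus against polynomials in the \emph{other} block of variables, whose non-monomiality is manifest, rather than against the cluster variables $\tQ_{\al,m-1}$ themselves, whose factorizations are inaccessible. To salvage your route you would need an independent inductive control on the order of vanishing of $\tQ_{\al,m}$ along each hyperplane $\tQ_{\beta,1}=0$, which is essentially the Lemma again.
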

This follows from the fact that KR-modules are all in the
Groethendieck group of the trivial and fundamental KR-modules.

One of the reasons for recasting the $Q$-system as a cluster algebra
is the availability of the Laurent phenomenon theorem. It implies a
purely algebraic proof for the polynomiality lemma. (The idea for this
proof is due to Sergei Fomin.) We will explain the general situation
which gives rise to this property in the $Q$-system.

Consider a cluster algebra with a special node, which we call $0$ or
the origin,
with cluster variables 
$$\bx = (a_1,...,a_n;b_1,...,b_m)=(\ba,\bb)$$ and mutation
matrix $B$ such that 
$$B_{ij}=0\qquad \hbox{ if $1\leq i,j\leq n$.}$$

We note that, as a result of this assumption about the form of the
$B$-matrix, we have
$$
\mu_i(a_i) = \frac{N_i(\bb)}{a_i},\qquad 1\leq i \leq n,
$$
where
$$
N_i(\bb) = \prod_{j} b_j^{[B_{j+n,i}]_+} +\prod_{j} b_j^{[-B_{j+n,i}]_+}
$$
is a function of $\{b_1,...,b_m\}$ only, because $B_{j,i}=0$ if
$i,j\leq n$.

We assume that there is a special point in the parameter space $\bb$,
which we call the KR point in analogy with the $Q$-system situation,
such that for all $i\leq n$, $\mu_i(a_i)=0$.  Evaluation of $\bb$ at
the KR point means the evaluation at the point $N_i(\bb)=0$ ($i\leq n$).
This point depends only on $\bb$, not on $\ba$, which are left as
formal variables.

We will show that for a cluster variable $y(\ba,\bb)$ at any node of
the cluster tree $\mathbb T_{m+n}$, considered as a function of
$(\ba,\bb)$ (a Laurent polynomial, due to the Laurent property), terms
with singularities at any $a_i=0$ vanish at the KR point.

We use the vector notation, so that if $\mathbf b = (b_1,...,b_i)$ and
$\bm = (m_1,...,m_i)$ then $\mathbf b^\bm=\prod_j b_j^{m_j}$.

\begin{lemma}
Let
$$
y(\ba,\bb) = \bb^{-\bm}P(\ba,\bb)
$$
where $P(\ba,\bb)$ is a polynomial in $\bb$ but may have some
singularities in $\{a_i\}$. That is,
$$
P(\ba,\bb) = \sum_{\bn\in\Z^r} C_\bn(\bb) \ba^\bn,
$$
with only finitely many non-zero terms, and where $C_n(\bb)$ are
polynomials in the variables $\bb$. Then $C_\bn(\bb)$ is divisible by
$$
\prod_{j: n_j<0} N_j(\bb)^{-n_j}.
$$
\end{lemma}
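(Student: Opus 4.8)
The plan is to exploit the Laurent phenomenon together with a reachability argument: every cluster variable $y(\ba,\bb)$ at an arbitrary node of $\mathbb T_{m+n}$ is, by \cite{FZlaurent}, a Laurent polynomial in the initial seed variables $(\ba,\bb)$. Writing $y(\ba,\bb)=\bb^{-\bm}P(\ba,\bb)$ we must show that the coefficient $C_\bn(\bb)$ of $\ba^\bn$ in $P$, for a multi-index $\bn$ with some negative entries $n_j<0$, is divisible by $N_j(\bb)^{-n_j}$. The key structural input is the hypothesis $B_{ij}=0$ for $i,j\le n$, which as noted in the excerpt makes $\mu_i(a_i)=N_i(\bb)/a_i$ a function of $\bb$ alone, and which also means the mutations $\mu_1,\dots,\mu_n$ mutually commute at the origin $0$.

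The first step I would take is to observe that the Laurent phenomenon can be strengthened: a cluster variable is simultaneously a Laurent polynomial in the variables of \emph{several} adjacent seeds. Concretely, for any subset $S\subseteq\{1,\dots,n\}$, applying the commuting mutations $\{\mu_i\}_{i\in S}$ to the origin gives a seed whose cluster variables are $\{N_i(\bb)/a_i\}_{i\in S}\cup\{a_i\}_{i\notin S}\cup\{b_j\}$ and whose $B$-matrix still has a vanishing $n\times n$ block (this was essentially the content of how the simply-laced and non-simply-laced graphs were built above, where such compound mutations were iterated). Hence $y$ is a Laurent polynomial in the variables of \emph{each} of these $2^n$ seeds. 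The second step is to extract the divisibility from this fact one index at a time: fix $j$ with $n_j<0$; the seed obtained by mutating only at $j$ has $a_j$ replaced by $N_j(\bb)/a_j$, and demanding that $y$, rewritten in terms of this new variable, have no negative power of $N_j(\bb)/a_j$ beyond what a Laurent polynomial allows forces every term of $P$ carrying $a_j^{n_j}$ with $n_j<0$ to contribute a compensating factor of $N_j(\bb)^{-n_j}$; collecting these terms shows $N_j(\bb)^{-n_j}\mid C_\bn(\bb)$.

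The third step is to upgrade this single-index divisibility to the full product $\prod_{j:\,n_j<0}N_j(\bb)^{-n_j}$. Here I would argue that the polynomials $N_j(\bb)$ for distinct $j$ are pairwise coprime in $\C[\bb]$ — each $N_j$ is a sum of two distinct monomials in $\bb$, i.e.\ a binomial $\bb^{\bu_j}+\bb^{\bv_j}$ with $\bu_j\ne\bv_j$, and such binomials corresponding to different columns of the $B$-matrix have no common factor (one may need to check the exponent vectors $\bu_j-\bv_j$ are distinct and primitive, which follows from the explicit form of $B$ in our setting). Given pairwise coprimality, the single-index divisibilities $N_j(\bb)^{-n_j}\mid C_\bn(\bb)$ combine by the Chinese-remainder/unique-factorization argument in $\C[\bb]$ to give divisibility by the product, which is the claim.

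I expect the main obstacle to be the second step — making rigorous the passage from ``$y$ is a Laurent polynomial in the mutated-seed variable $N_j(\bb)/a_j$'' to ``each coefficient $C_\bn(\bb)$ with $n_j<0$ is divisible by $N_j(\bb)^{-n_j}$.'' One must be careful that $P(\ba,\bb)$ genuinely contains negative powers of $a_j$ only through the combination dictated by the substitution, and that no cancellation among different $\bn$ obscures the divisibility of an individual $C_\bn$; the clean way is to treat $a_j$ as the only active variable (freezing the others) and reduce to the one-variable statement that a Laurent polynomial in $a_j$ which becomes a Laurent polynomial in $N_j/a_j$ after substitution must have its principal part divisible by the appropriate power of $N_j$. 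The coprimality verification in step three is routine once the explicit binomials $N_j(\bb)$ are written down from $B$, but it does require invoking the specific structure of the matrix \eqref{Bmatrix} rather than treating $B$ as a black box.
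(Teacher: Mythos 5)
Your step 2 is essentially the paper's argument: the paper also changes variables to $a'_j=\mu_j(a_j)=N_j(\bb)/a_j$ and invokes the Laurent property at the mutated seed, using the algebraic independence of the $a'_j$ to rule out cancellation between different $\bn$. The genuine problem is your step 3. The pairwise coprimality of the $N_j(\bb)$ that you need in order to assemble the single-index divisibilities into divisibility by the product is simply false in the generality in which the lemma is stated (an arbitrary skew-symmetric $B$ whose upper-left $n\times n$ block vanishes). For instance, take $n=2$, $m=1$ and $B$ with $B_{31}=B_{32}=1$: then $N_1(\bb)=N_2(\bb)=b_1+1$, and for a term with $n_1=n_2=-1$ your argument only yields divisibility of $C_\bn(\bb)$ by $b_1+1$, whereas the lemma asserts divisibility by $(b_1+1)^2$. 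Nothing in the block-structure hypothesis prevents two columns of the lower-left block from coinciding, so the coprimality cannot be ``checked from the explicit form of $B$'' at the level of generality at which the lemma is used; it would at best hold for the particular $Q$-system matrices, and even there you have not verified it.

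The fix is already sitting in your step 1, and it is exactly what the paper does: apply the full compound mutation $\mu_1\circ\cdots\circ\mu_n$ (all commuting because of the vanishing block), so that the new cluster is $(a'_1,\dots,a'_n,b_1,\dots,b_m)$ with $a_j=N_j(\bb)/a'_j$. Substituting, the monomial $\ba^{\bn}$ becomes $\prod_j (a'_j)^{-n_j}N_j(\bb)^{n_j}$, distinct $\bn$ give distinct monomials in the independent variables $a'_j$, and the Laurent property at this seed forces the coefficient $C_\bn(\bb)\prod_j N_j(\bb)^{n_j}$ of each such monomial to be a Laurent polynomial in $\bb$. Since each $N_j(\bb)$ is a sum of two monomials and hence not a unit, this gives divisibility of $C_\bn(\bb)$ by the \emph{entire} product $\prod_{j:n_j<0}N_j(\bb)^{-n_j}$ in one step, with no coprimality input whatsoever. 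In short: do not mutate one index at a time and recombine; mutate all of them at once and read the product off a single monomial coefficient.
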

\begin{proof}
By change of variables to $\mu_i(a_i)=a_i'$ for all $i\leq n$, we have that 
$$
P(\ba,\bb) = \sum_{\bn\in \Z^r} C_\bn(\bb) \prod_{j=1}^n
(a'_j)^{-n_j} N_j(\bb)^{n_j}.
$$
Any terms with $n_j<0$ have a positive power of $N_j(\bb)$ in the
denominator. These must cancel with factors in $C_\bn(\bb)$, for each
$\bn$ which has negative components, because the
Laurent property states that a cluster variable cannot be a rational
function in the 
variables $\bb$. Furthermore, there can be no cancellations between terms with
different $\bn$, since the $a'_j$ are independent variables. 
Thus, $C_\bn(\bb)$ is divisible by $N_j(\bb)^{-n_j}$ for each negative
$n_j$. 
\end{proof}

\begin{remark}
It was pointed out to us after this paper was completed that this
Lemma follows from Lemma 4.2 of \cite{BFZ}.
\end{remark}

Thus we have what we called ``strong Laurent phenomenon'' in
\cite{Ke07} or the polynomiality property:
\begin{cor}
With the cluster algebra as defined above, at the evaluation of $\bb$
at a point such that $\mu_i(a_i)=N_i(\bb)=0$,
all cluster variables are polynomials in the variables $\{a_i\}$.
\end{cor}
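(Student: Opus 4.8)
The plan is to deduce the corollary directly from the preceding Lemma together with the Laurent phenomenon, with no further input. First I would fix an arbitrary cluster variable $y$, i.e. a coordinate of the cluster at some node of $\mathbb T_{m+n}$, regarded as a function of the initial seed $(\ba,\bb)$ at the origin. By the Laurent phenomenon \cite{FZlaurent}, $y\in\Z[a_1^{\pm1},\dots,a_n^{\pm1},b_1^{\pm1},\dots,b_m^{\pm1}]$. In particular there is a monomial $\bb^\bm$ for which $\bb^\bm y$ is polynomial in the $b_j$ while remaining Laurent in the $a_i$; grouping the monomials of $\bb^\bm y$ by their $\ba$-degree then yields
$$
y=\bb^{-\bm}\,P(\ba,\bb),\qquad P(\ba,\bb)=\sum_\bn C_\bn(\bb)\,\ba^\bn ,
$$
a finite sum in which each $C_\bn(\bb)$ is automatically a polynomial in $\bb$. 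This is precisely the hypothesis of the Lemma.

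Next I would invoke the Lemma: each coefficient $C_\bn(\bb)$ is divisible by $\prod_{j:\,n_j<0}N_j(\bb)^{-n_j}$. Since the KR point is by definition the common zero locus of $N_1(\bb),\dots,N_n(\bb)$, every $C_\bn$ with $\bn$ having at least one strictly negative component vanishes there. Hence, after setting $\bb$ to the KR point, only the terms with $\bn\in\Z_{\ge0}^n$ survive and
$$
y|_{\rm KR}=\bb^{-\bm}|_{\rm KR}\,\sum_{\bn\ge0}C_\bn(\bb)|_{\rm KR}\,\ba^\bn
$$
is a polynomial in $\ba=(a_1,\dots,a_n)$, as claimed. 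Specialized to the seed of Section \ref{qascluster}, whose $\ba$-block consists of the fundamental characters $\tQ_{\al,1}$ (the complementary $r\times r$ block of $B$ being zero) and whose KR point is $\tQ_{\al,0}=\epsilon_\al$, this recovers Lemma \ref{polynomiality}.

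The one step that genuinely needs care — and the place I would expect a skeptical reader to push back — is the legitimacy of the prefactor $\bb^{-\bm}|_{\rm KR}$: for $y|_{\rm KR}$ to be an honest polynomial in $\ba$ one needs the $b_j$ not to vanish at the KR point (equivalently, one evaluates $\bb$ at a point of $(\C^*)^m$ annihilating all $N_i$). This holds in the $Q$-system application because $\tQ_{\al,0}=\epsilon_\al\neq0$, and more generally it is exactly the reason the cluster variables are well-defined at the KR point in the first place. Everything else is formal: the argument uses only the abstract Laurent property and the block condition $B_{ij}=0$ for $i,j\le n$ at the origin, so no case analysis on the type of $\g$ enters, and the conclusion holds verbatim on the whole tree $\mathbb T_{m+n}$, not merely on the subgraph $\cG_\g$.
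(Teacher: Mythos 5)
Your proof is correct and follows essentially the same route as the paper: invoke the preceding Lemma to conclude that every coefficient $C_\bn(\bb)$ with a negative component of $\bn$ is divisible by a positive power of some $N_j(\bb)$ and hence vanishes at the KR point, leaving only non-negative powers of $\ba$. Your added remark that the $b_j$ must be nonzero at the KR point for the prefactor $\bb^{-\bm}$ to be harmless is a point the paper also makes (just earlier in Section 4, when noting $\tQ_{\al,0}=\epsilon_\al\neq 0$), so nothing is missing.
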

\begin{proof}
$C_\bn(\bb)$ is divisible by $N_j(\bb)^{-n_j}$ for each negative
$n_j$. Therefore, it vanishes at the KR point, where $N_j(\bb)=0$ for
each $j$. These are all the terms which have poles at $a_j=0$. 
\end{proof}

Lemma \ref{polynomiality} is a special case of this statement (with a
rearrangement of the indices). We have $m=n=r$, $a_i=Q_{i,1}$ and
$b_i=Q_{i,0}$. The indices of the matrix $B$ are accordingly
rearranged. Note that, in all the algebras introduced so far, we had
the property that $B_{ij}=0$ if $r<i,j\leq 2r$, which is the analog of
the condition on $B$ above.

We conclude that the Laurent phenomenon is responsible for the
polynomiality property of the characters $Q_{\al,i}$ in terms of
$Q_{\al,1}$ for the KR $Q$-system. This property is quite general and
is not related to the appearance of a Cartan matrix in the mutation
matrix, only on the block structure of the mutation matrix, and the
specific nature of the KR point in the parameter space.

Moreover, we can conclude much more.
\begin{cor}
Any cluster variable at any node
of the full cluster graph $\mathbb T_{2r}$ extending $\cG_\g$ as
defined in Section \ref{qascluster} is a polynomial in $\tQ_{\al,1}$ after
specialization of the parameters to the $KR$ point,
$\tQ_{\al,0}=\epsilon_\al$.  
\end{cor}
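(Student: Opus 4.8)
The plan is to derive this as an immediate instance of the strong‑Laurent/polynomiality Corollary proved just above, once the correct ``origin'' seed is identified and the KR point is shown to be a zero of all the $N_i(\bb)$. First I would take as origin the node of $\cG_\g$ carrying the seed $(\bx[0],B)$, with $\bx[0]$ given by \eqref{clustervariable} at $k=0$, so that $x_\al=\tQ_{\al,0}$ for $\al\le r$ and $x_{\al}=\tQ_{\al-r,1}$ for $r<\al\le 2r$, and $B$ as in \eqref{Bmatrix}. Reindexing so that the last $r$ slots are listed first, I set $\ba=(\tQ_{1,1},\dots,\tQ_{r,1})$ and $\bb=(\tQ_{1,0},\dots,\tQ_{r,0})$. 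The lower‑right $r\times r$ block of $B$ is the zero matrix, which is exactly the block condition ``$B_{ij}=0$ for $1\le i,j\le n$'' required in the general setup, with $n=m=r$; the $\ba$‑variables $\{\tQ_{\al,1}\}$ are algebraically independent seed variables, as needed.

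Next I would evaluate the one‑step mutations in the $\ba$‑directions at the origin. Column $\al+r$ of $B$ is $(-C_{\al,1},\dots,-C_{\al,r},0,\dots,0)^t$, and since $C_{\al,\al}=2$ and $C_{\al,\beta}=-|C_{\al,\beta}|\le 0$ for $\beta\ne\al$, one gets
$$\mu_{\al+r}(\tQ_{\al,1})=\tQ_{\al,1}^{-1}\Bigl(\tQ_{\al,0}^{2}+\prod_{\beta\sim\al}\tQ_{\beta,0}^{|C_{\al,\beta}|}\Bigr)=:\tQ_{\al,1}^{-1}N_\al(\bb).$$
Thus $N_\al$ depends only on $\bb$, as required, and one recognizes $\mu_{\al+r}(\tQ_{\al,1})=\tQ_{\al,-1}$, the backward $Q$‑system step. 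Evaluating at the KR point $\tQ_{\al,0}=\epsilon_\al$ and using the defining relation $\prod_\beta\epsilon_\beta^{C_{\al,\beta}}=-1$ of Lemma \ref{renormalize}, which rearranges to $\epsilon_\al^{2}=-\prod_{\beta\sim\al}\epsilon_\beta^{|C_{\al,\beta}|}$, yields $N_\al(\bb)|_{\mathrm{KR}}=0$ for every $\al$; equivalently $\tQ_{\al,-1}=\epsilon_\al Q_{\al,-1}=0$ there. Hence the hypotheses of the Lemma and Corollary preceding this statement hold verbatim for the seed $(\bx[0],B)$ and the KR specialization of $\bb$. Applying that Corollary, every cluster variable at every node of the $(2r)$‑valent tree $\mathbb T_{2r}$ based at $(\bx[0],B)$ is a polynomial in $\{\tQ_{\al,1}\}$ after specialization to the KR point. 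Since $\cG_\g$ is a quotient of a connected subgraph of this very $\mathbb T_{2r}$ containing the origin node, this tree is precisely the ``full cluster graph extending $\cG_\g$'' of the statement, which finishes the proof.

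There is no genuine obstacle; the only point demanding care is the bookkeeping. One must match the index conventions of \eqref{Bmatrix}--\eqref{clustervariable} to the block form ``$B_{ij}=0$ for $1\le i,j\le n$'' used in the general statement — the two zero blocks of $B$ sit in opposite corners, so the roles of $\ba$ and $\bb$ are interchanged — and one must confirm that $\prod_{\beta\sim\al}\widetilde{\cT}^{(\al,\beta)}_0$ collapses to the monomial $\prod_{\beta\sim\al}\tQ_{\beta,0}^{|C_{\al,\beta}|}$ in all Cartan types, including the short‑root pair of $B_r,C_r,F_4,G_2$, where the floor functions in \eqref{T} must be checked to degenerate at $k=0$.
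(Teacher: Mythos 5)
Your proposal is correct and follows essentially the same route as the paper: identify the seed at node $0$ of $\cG_\g$ as the ``origin'' with $\ba=(\tQ_{\al,1})$, $\bb=(\tQ_{\al,0})$, note that the relevant $r\times r$ block of $B$ vanishes, check that $N_\al(\bb)=\tQ_{\al,0}^2+\prod_{\beta\sim\al}\tQ_{\beta,0}^{|C_{\al,\beta}|}$ vanishes at $\tQ_{\al,0}=\epsilon_\al$ (equivalently $\tQ_{\al,-1}=0$), and invoke the general strong-Laurent corollary on all of $\mathbb T_{2r}$. Your write-up is in fact more explicit than the paper's, which simply records the index identification and the block condition and states the conclusion.
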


We do not, at this point, have a
representation-theoretical interpretation for this
phenomenon.

\section{Discussion}\label{discussion}
\subsection{Deformed Q-systems and the cluster algebra}

In \cite{DFK}, we have defined deformed Q-systems depending 
on infinite families of parameters
$\bu=\{u_\al,u_{\al,1},u_{\al,2},...\}_{\al\in I_r}$, 
as well as the extra parameters
$\ba = \{ a_1,a_2,...\}$ in the non-simply-laced case. The deformed
$Q$-system is defined as follows:
\begin{equation}\label{cqsys}
u_{\al,n}\cQ_{\al,n+1}\cQ_{\al,n-1}=\cQ_{\al,n}^2 - \prod_{\beta \sim
  \al} {\mathcal U}_{n}^{(\al,\beta)}, \qquad \al\in I_r,\ n\geq 1,
\end{equation}
where 
$${\mathcal
  U}_{n}^{(\al,\beta)}=\prod_{i=0}^{|C_{\al,\beta}|-1}
\cQ_{\beta,\lfloor\frac{t_\beta n+i}{t_\al} \rfloor},\qquad
(\al,\beta)\neq (\sh,\lo)$$
and
$${\mathcal
  U}_{n}^{(\al,\beta)}=a_n^{t_\al(1+\lfloor\frac{n}{t_\al}\rfloor)-n}
\prod_{i=0}^{|C_{\al,\beta}|-1} \cQ_{\beta,\lfloor\frac{n+i}{t_\al}
  \rfloor}\qquad (\al,\beta)=(\sh,\lo).$$

The deformed $Q$-system \eqref{cqsys} is subject to the  initial
conditions that $\cQ_{\al,0}=1$ and $\cQ_{\al,1}=u_\al^{-1}$, for all
$\al \in I_r$.

The deformed variables $\cQ(\bu,\ba)$
were introduced as a tool for proving the $M=N$ identity of \cite{HKOTY},
yielding fermionic  expressions for the  tensor product
multiplicities of KR-modules.

We may view the cluster variables $\tQ_{\al,k}$ of the cluster graph
$\cG_\g$ of Section \ref{qascluster} as specializations of the
deformed characters $\cQ(\bu,\ba)$ (after normalization). 
This amounts to relaxing the initial condition on the $Q$-system, which
is the specialization to the KR point.

Let $\varphi$
denote the evaluation
$$\varphi(a_i)=1,\quad \varphi(u_{\al,i})=1,\quad i\geq 2,$$ 
and
$$
\varphi(a_1)=u_{\sh,1}$$ 
where $\sh$ the unique short root connected to a long root. 

The evaluated solutions of the 
deformed $Q$-system
$$\overline{\cQ}(\{u_\al,u_{\al,1}\}_{\al\in
  I_r}):=\varphi(\cQ(\bu,\ba))$$ 
obey a system of equations identical
to the Q-system \eqref{Qsystem} for all $k\geq 1$, except
for the difference that $\overline{\cQ}_{\al,0}$ always comes with a
prefactor $u_{\al,1}$. 

Therefore, we may absorb the parameter
$u_{\al,1}$ into the definition of $\overline{\cQ}_{\al,0}$, namely by
introducing 
$$Q_{\al,k}(\{u_{\beta,1},u_\beta^{-1}\}_{\beta\in
  I_r}):=u_{\al,1}^{\delta_{k,0}}
\overline{\cQ}_{\al,k}(\{u_\beta,u_{\beta,1}\}_{\beta\in I_r}).$$
  Then
$Q_{\al,k}$ obey the Q-system \eqref{Qsystem} for $k\geq 1$, with
the initial conditions replaced by
$Q_{\al,0}=u_{\al,1}$ and $Q_{\al,1}=u_\al^{-1}$. This is identical to
\eqref{deformedinitial}. 

Upon the renormalization of the variables as in Lemma
\ref{renormalize}, we deduce that the specialized deformed KR
characters are related to the variables $\tQ_{\al,k}$ via:
\begin{equation}
  \tQ_{\al,k}=\epsilon_\al u_{\al,1}^{\delta_{k,0}}
  \varphi\Big(\cQ_{\al,k}(\bu,\ba)\Big).
\end{equation}

We note that without the evaluation $\varphi$, the variables
$\cQ_{\al,m}(\bu,\ba)$ are not obviously part of a cluster
algebra. For example, they do not have the Laurent property.

In \cite{DFK}, we derived and used a crucial ``substitution
invariance'' property of the deformed KR characters $\cQ$ which, after
the specialization $\varphi$, reads as follows for the $Q$'s:
\begin{equation}\label{translaQ}
Q_{\al,k+t_\al
  j}(\{Q_{\beta,0},Q_{\beta,1}\})=Q_{\al,k}(\{Q_{\beta,t_\beta
  j},Q_{\beta,t_\beta j+1}\}) .
\end{equation}
We showed that this condition is equivalent to the $Q$-system, with
appropriate initial conditions.

When rephrased in terms of $\tQ$'s, with $j=2m$ even, this is
nothing but the property of translational invariance of the cluster
graph $\cG_\g$. Namely, the expression of cluster variables at any
node $n$ of $\cG_\g$ in terms of that at cluster variables at the node $0$ 
is the same as
that of the cluster variables at the node $n+m$ in terms of the
cluster variables at the node $m$. This is obvious from the structure
of the graph.

Equation \eqref{translaQ} is actually slightly more general, as it also
includes translations by an odd integer $j$. For instance, the case
$j=1$ corresponds in the cluster formulation to the
$k\to k'$ (``half-'') translation in the simply-laced case, and to $k\to
k^{(3)}$ in the $G_2$ case. 
At the node $k'$ (or
$k^{(3)}$ for $G_2$), the mutation matrix is indeed $JBJ$, where
$J=\left(\begin{matrix} 0 & I_{r\times r} \\ I_{r\times r} & 0
  \end{matrix}\right)$, hence the nodes $k$ and $k'$ (or $k$ and
$k^{(3)}$ for $G_2$) are equivalent upon interchanging the even and
odd cluster variables $\bx \to J \bx$, and the half-translation
invariance property \eqref{translaQ} for odd $j$ follows.

\subsection{Extended graphs}\label{extensions}

As remarked above, generally there are larger subgraphs of $\mathbb
T_{2r}$ which correspond to $Q$-system evolutions. 

\subsubsection{Graphs corresponding to $Q$-system evolutions}
In Example \ref{Athree} for $A_3$, we did not describe edges which do
not correspond to $Q$-system evolutions. But in addition this graph
does not describe all edges which are $Q$-system evolutions. For
example, $\mu_6$ acting on $\mu_2\circ \mu_3(\bx[k],B)$ is a
$Q$-system evolution, which results in the cluster variable
$$\bx=(\tQ_{1,2k},\tQ_{2,2k+2},\tQ_{3,2k+2},\tQ_{1,2k+1},
\tQ_{2,2k+1},\tQ_{3,2k+3}).$$ 
Similarly, $\mu_4$ acting on $\mu_1\circ \mu_2(\bx[k],B)$ is a
$Q$-system evolution.

 We can describe the complete graph corresponding
  to $Q$-system evolutions in the case of $A_3$:
\vskip.1in
\begin{center}
\psset{unit=2mm,linewidth=.4mm,dimen=middle}

\begin{pspicture}(0,0)(60,40)
\psline(0,0)(10,10)(0,20)(10,30)(0,40)
\psline(10,10)(20,20)(10,30)
\psline[linewidth=1.5mm,linecolor=white](20,10)(10,20)(20,30)
\psline(20,10)(10,20)(20,30)
\psline(10,10)(20,10)
\psline(20,30)(10,30)
\psline(0,20)(10,20)
\psline(30,0)(20,10)(30,20)(20,30)(30,40)(40,30)(30,20)(40,10)(30,0)
\psline(40,10)(50,20)(40,30)
\psline[linewidth=1.5mm,linecolor=white](50,10)(40,20)(50,30)
\psline(50,10)(40,20)(50,30)
\psline(20,20)(40,20)
\psline(60,0)(50,10)(60,20)(50,30)(60,40)
\psline(40,10)(50,10)
\psline(50,20)(60,20)
\psline(40,30)(50,30)
\multips(0,0)(30,0){3}{\pscircle*[linecolor=red](0,0){.7}}
\multips(0,40)(30,0){3}{\pscircle*[linecolor=red](0,0){.7}}
\multips(0,20)(10,0){7}{\pscircle*[linecolor=red](0,0){.7}}
\multips(10,10)(10,0){2}{\pscircle*[linecolor=red](0,0){.7}}
\multips(40,10)(10,0){2}{\pscircle*[linecolor=red](0,0){.7}}
\multips(10,30)(10,0){2}{\pscircle*[linecolor=red](0,0){.7}}
\multips(40,30)(10,0){2}{\pscircle*[linecolor=red](0,0){.7}}
\rput(4,6){4}
\rput(6,16){3}
\rput(5,21){2}
\rput(4,26){1}
\rput(6,36){6}
\rput(15,11){2}
\rput(15,31){2}
\rput(13,14){1}
\rput(13,25){1}
\rput(14,18){3}
\rput(14,28){3}
\rput(24,36){4}
\rput(26,26){3}
\rput(25,21){2}
\rput(24,16){1}
\rput(26,6){6}
\rput(34,6){1}
\rput(36,16){6}
\rput(35,21){5}
\rput(34,26){4}
\rput(36,36){3}
\rput(45,11){5}
\rput(45,31){5}
\rput(43,14){4}
\rput(43,25){4}
\rput(44,18){6}
\rput(44,28){3}
\rput(54,36){1}
\rput(56,26){6}
\rput(55,21){5}
\rput(54,16){4}
\rput(56,6){3}
\rput(0,18){$k$}
\rput(61,18){$k+1$}
\end{pspicture}
\end{center}

\subsection{Generalizations and open questions}
As mentioned in the introduction, there are generalizations of
$Q$-systems to Cartan matrices corresponding to affine algebras
\cite{Her07}. Moreover there are generalizations of $Q$-systems to
twisted quantum affine algebras. It is reasonable to expect that these
can be recast in the cluster algebra language, although Lemma
\ref{renormalize} does not, in general, work for affine Cartan
matrices. 

$Q$-systems are obtained from $T$-systems by taking the combinatorial
limit, which amounts to ignoring the dependence on the spectral
parameter \cite{KR,KunibaYsys}. The $T$-system is a discrete Hirota
equation in the case of $A_n$, and in general has an integrability
property. As a result, $Q$-systems inherit some remarkable properties,
such as the existence of conserved quantities and associated linear
systems. We will address this property in a future publication.

An interesting question arises from the $M=N$ identity which was
proved in \cite{DFK}. This is a combinatorial identity which relates a
restricted sum over products of binomial coefficients to an
unrestricted one (see \cite{HKOTY} for the original conjecture). 

For example, the $M=N$ identity has the following form for $A_1$:
$$
\sum_{\bm\in \Z_+^k~:~p_i\geq 0} \prod_i {m_i+p_i \choose m_i} =
\sum_{\bm\in \Z_+^k} \prod_i {m_i+p_i \choose m_i}
$$
where for a choice of non-negative integers $\bn\in \Z_+^k$, $p_i = 
\sum_j\min(i,j) (n_j - 2 m_j)$ and the sum is taken over $\bm\in
\Z_+^k$ such that $\sum_j j(n_j - 2 m_j)=l\in \Z_+$.

The restricted sum on the left hand side is manifestly positive,
whereas the unrestricted sum on the right is an alternating sum. In
\cite{DFK}, we have recast
this identity in terms of generating functions and their power series
expansions in order to prove the original conjecture of
\cite{HKOTY}. The $M=N$
identity is a property of the following generating function:
$$
Z_{l;\bn}^{(k)}(\bu)=
\frac{\cQ_1(\bu)\cQ_k(\bu)^{l+1}}{\cQ_{k+1}(\bu)^{l+1}}\prod_{i=1}^k
\frac{\cQ_i(\bu)^{n_i}}{u_i},
$$
where $\cQ_m(\bu)$ are solutions of the deformed $Q$-system.  The
$M=N$ identity is the following statement: The constant term of $Z$ in
$u$, after evaluation at $u_1=\cdots=u_k=1$, is equal to the constant
term in $u$ of the power series part of $Z$ in each of the $u_i$,
after evaluation at the same point.

Our proof relies only on the polynomiality property of the
solutions to the KR $Q$-system.  It would be interesting to understand
this identity more fully in the context of the properties of the
associated cluster algebras.

\begin{appendix}
\newcommand{\bq}{\mathbf q}
\newcommand{\even}{{\rm even}}
\newcommand{\odd}{{\rm odd}}
\newcommand{\sign}{{\rm sign}}

\section{$Q$-systems with coefficients}
In this paper, we considered only $Q$-systems which correspond to some
finite-type Cartan matrix. For this reason, it is always possible to
renormalize the variables $Q_{\alpha,k}$ which appear in the
$Q$-system \eqref{Qsystem} so as to eliminate the minus sign appearing on
the right hand side, due to Lemma \ref{renormalize}. 
This is necessary in order to conform with the
usual definition of a cluster mutation, which is a subtraction-free
expression. 

Such renormalization is not always possible, for example for certain
affine-type Cartan matrices, and therefore in general, it is
preferable to 
reformulate the system in terms of a cluster algebra with coefficients
\cite{FZfour}, in such a way that a specialization of the values of
the coefficients reproduces the original $Q$-system \eqref{Qsystem}.

It is possible to formulate this by introducing the coefficients
$\{q_1,...,q_r\}$ as extra cluster variables {\em which do not
mutate}, and then writing a ``$Q$-system with coefficients'' by
replacing the minus sign on the right hand side of Equation
\eqref{Qsystem} with these coefficients. 

The resulting $Q$-systems with
coefficients are 
\begin{equation}\label{Qsyscoeff}
Q_{\al,k+1}Q_{\al,k-1}=Q_{\al,k}^2 + q_\al \prod_{\beta\sim\al}
\mathcal T_k^{(\alpha,\beta)},
\end{equation}
with $\mathcal T_k^{(\alpha,\beta)}$ as in equation \eqref{T}. When
$q_1=\cdots=q_r=-1$, this reproduces the original $Q$-system \eqref{Qsystem}.

For each simple Lie algebra, the systems \eqref{Qsyscoeff} correspond
again to a cluster algebra on the subgraphs $\mathcal G_r$ introduced
in Section 3.  Each mutation along edges of $\mathcal G_r$ is one of
the equations in the system of equations \eqref{Qsyscoeff}.  In this
case, the
coefficients $q_\alpha$ enter the exchange relation via an augmented
exchange matrix $\widetilde{B}$. The formulation is as follows.

At the node $k$ (in the notation of Section 3) of the graph $\mathcal
 G_r$, we have the augmented cluster variables 
$$\bx=(\mathbf Q_{{\rm even}},\mathbf Q_{{\rm odd}}; q_1,...,q_r),$$
 where $\mathbf Q_{{\rm even}}$ is the collection of elements
 $(Q_{\alpha, 2 t_\alpha k})_{\alpha\in I_r}$ and $\mathbf Q_{\rm
 odd}=(Q_{\alpha, 2 t_\alpha k + 1})_{\al\in I_r}$. Thus, $\bx$ is a
 collection of $3r$ elements, the last $r$ of which are the
 coefficients $q_1,...,q_r$, which do not mutate.

The exchange matrix is, in principle, of size $3r\times 3r$ but we
need only specify the first $2r$ columns, because the coefficients
$\{q_\alpha\}_{\alpha\in I_r}$, which are the last $r$ variables of
$\bx$, do not mutate.  The relevant $3r\times 2r$ exchange matrix,
consisting of the first $2r$ columns, is denoted by $\widetilde{B}$
and we refer to it as the {\em augmented exchange matrix}.

\begin{defn}
The augmented exchange matrix $\widetilde B$ at node $k$ is a
$3r\times 2r$-matrix with entries as follows: The first
$2r$ rows of $\widetilde{B}$ coincide with those of $B$, while the
last $r$ rows have entries
$$
\widetilde{B}_{2r+\al,\beta}=-\delta_{\al,\beta}=
-\widetilde{B}_{2r+\al,r+\beta},\qquad  
\al,\beta\in I_r.
$$
\end{defn}

We claim that the cluster mutations 
\begin{equation}\label{exchange}
\mu_i:~ x_i \mapsto x_i^{-1}\left( 
\prod_{1\leq j\leq 3r} x_j^{[\widetilde{B}_{ji}]_+} +
  \prod_{1\leq j\leq 3r}  
  x_j^{[-\widetilde{B}_{ji}]_+}\right), \quad 1\leq i \leq 2r.
\end{equation}
along the edges of the cluster
subgraph $\mathcal G_r$ coincide with the relations \eqref{Qsyscoeff}.
In fact we have already shown this for the submatrix $B$ in Section 3.
We need only show that the augmentation of $\widetilde{B}$ has the
correct evolution.
This can be
seen either algebraically or graphically. 

\begin{example}
Let us illustrate this structure in the simply-laced case. In this case,
at node $k$, the augmented exchange matrix has the block form
$$ \widetilde{B} = \left[\begin{array}{cc} 0 & -C \\ C & 0 \\ -I & I
\end{array} \right] ,
$$
where $I$ is the $r\times r$ identity matrix and $C$ is the Cartan matrix.

In the subgraph $\mathcal G_r$, mutations along
the edges labeled $1,...,r$ are $Q$-system evolutions corresponding to
the evolution of variables with even indices. These commute, and since
$\widetilde{B}_{2r+\alpha,\beta}=-\delta_{\alpha,\beta}$, the
coefficient $q_\alpha$ is introduced in the second term
of the right hand side of \eqref{exchange}.

Thus, the mutation
$\mu_\alpha$ with $1\leq \alpha\leq r$ has the effect of changing the
sign of $\widetilde{B}_{2r+\alpha,\alpha}$ and leaving the rest of the
block $(\widetilde{B}_{ij})_{2r<i\leq 3r \atop 1\leq j\leq r}$ unchanged. 

It was shown in Section 3 that 
$\mu_{1}\circ \cdots \circ\mu_r(B)=-B$.

Moreover, the block $(B_{ij})_{2r<i\leq 3r \atop r<j\leq 2r}$ also
changes sign, because
\begin{eqnarray*}
\mu_1\circ \cdots \circ
\mu_r(B_{2r+\alpha,r+\beta})&=&B_{2r+\alpha,r+\beta}+ \sum_{\gamma=1}^r 
B_{2r+\alpha,\gamma} [B_{2r+\alpha,\gamma}B_{\gamma,r+\beta}]_+\\
&=& B_{2r+\alpha,r+\beta}+ \sum_{\gamma=1}^r (-\delta_{\alpha,\gamma})
2\delta_{\gamma,\beta}\\
& =& -\delta_{\alpha,\beta}=-B_{2r+\alpha,r+\beta}.
\end{eqnarray*}
Thus, we have that $\mu_1\circ \cdots \circ
\mu_r(\widetilde{B})=-\widetilde{B}$ at the node $k'$.

Similarly, the effect of acting with the mutations $\mu_{r+1},... ,\mu_{2r}$
on $\widetilde{B}'$ again changes its overall sign.
\end{example}

In general, it is easiest to illustrate this part of the evolution of
$\widetilde{B}$ graphically. We represent a
skew-symmetric integer matrix as a quiver, where the
nodes are enumerated by the rows of the matrix, and if $A_{ij}=m>0$
then the node $i$ is connected to the node $j$ by an arrow from $i$ to
$j$ labeled by $m$.

In the case of the cluster algebra without coefficients, the quivers
have nodes $\alpha\in \{1,...,r\}$, corresponding to variables
$\mathbf Q_{\rm even}$
with even indices, and $\{\overline{\alpha}=r+\alpha, \alpha\in I_r\}$
corresponding to the variables with odd indices. Their connectivity is
determined by the exchange matrix $B$, which depends on the details of the
Cartan matrix.

To add the coefficients $q_\alpha$, we consider an extended quiver
corresponding to the matrix $\widetilde{B}$, extended by skew-symmetry
and with a vanishing diagonal block connecting the coefficient
nodes. The extended quiver thus has $r$ extra nodes, corresponding to
the coefficients $q_1,...,q_r$. Their connectivity to the other nodes
{\em does not depend} on the details of the Cartan matrix or $B$.

For each $\al$, node $q_\alpha$ is connected only to nodes $\alpha$
and $\overline{\alpha}=\alpha+r$. At the node labeled by $k$ in $\mathcal G_r$,
the connectivity of these three nodes is illustrated by the subquiver
in the left hand side of the figure below. The mutations $\mu_\alpha$
and $\mu_{\overline{\alpha}}$ are the only ones which act on it nontrivially, as
follows:
\begin{center}
\psset{unit=2.5mm,linewidth=.4mm,dimen=middle}
\begin{pspicture}(0,-2)(32,12)
\rput(0,5){\pscircle(0,0){1}}
\rput(0,5){$q_\al$}
\rput(10,0){\pscircle(0,0){1}}
\rput(10,0){$\overline{\al}$}
\rput(10,10){\pscircle(0,0){1}}
\rput(10,10){$\al$}
\psline{->}(1,4)(9,1)
\psline{<-}(1,6)(9,9)
\psline(10,1)(10,4)
\psline{->}(10,6)(10,9)
\rput(10,5){$2$}
\psline{->}(12,5.5)(18,5.5)
\rput(15,6.5){$\mu_\al$ or $\mu_{\overline{\alpha}}$}
\psline{<-}(12,4.5)(18,4.5)
\rput(15,3.5){$\mu_{\overline{\al}}$ or $\mu_\alpha$}
\rput(20,5){\pscircle(0,0){1}}
\rput(20,5){$q_\al$}
\rput(30,0){\pscircle(0,0){1}}
\rput(30,0){$\overline{\al}$}
\rput(30,10){\pscircle(0,0){1}}
\rput(30,10){$\al$}
\psline{<-}(21,4)(29,1)
\psline{->}(21,6)(29,9)
\psline{<-}(30,1)(30,4)
\psline(30,6)(30,9)
\rput(30,5){$2$}
\end{pspicture}
\end{center}
After acting with $\mu_\al$ or $\mu_{\overline{\alpha}}$ on the quiver on the
left, all arrows are 
reversed, and {\em vice versa}. 

Now we note that for both cases of simply-laced Lie algebras and the
non simply-laced ones, the evolution from the node $k$ to the node
$k+1$ always involves a sequence where $\mu_\alpha$ acts first, then
$\mu_{\overline{\alpha}}$, in this order, once (in the case of ADE), twice (in
the case of the short roots of B,C,F) or
three times (in the case of the short root of $G_2$). Thus, we return
to the original configuration on the left hand side of the picture
each time, for each of the triples $\alpha,\overline{\alpha},q_\alpha$.

This proves that the rows corresponding to $q_\alpha$ in the mutation
matrix $\widetilde{B}$ are identical at the nodes $k$ and $k+1$, and
hence so is the full exchange matrix.

It is left only to check that, when progressing from smaller to larger
$k$ (i.e. to the ``right'' in the graph $\mathcal G_r$), the
coefficient $q_\alpha$ always appears only in the second term. This is
again due to the order of acting with the even and odd mutations in the graph.

We conclude that the exchange relation with coefficients
\eqref{Qsyscoeff} is preserved under evolutions in the same subgraph
$\mathcal G_r$ which describes $Q$-system evolutions, for any of the
Cartan matrices corresponding to simple Lie algebras.

\section{Generalized $T$-systems, bipartite cluster algebras and the
  polynomial property}

There is a larger class of recursion relations which satisfy the same
polynomiality property, again due to the same argument as in Section
4.  In this section, we formulate these relations, which generalize
the $T$-systems of quantum spin chains \cite{KR,KunibaYsys}, as
cluster algebras. 

For clarity of presentation, we only consider
systems corresponding to simply-laced Lie algebras $\g$ here.  These
are ones which have the simple bipartite graph similar to $\mathcal
G_r$.

The $Q$-systems considered in this paper are the ``combinatorial
limit'' of such $T$-systems, which are the fusion relations satisfied
by the transfer matrices of generalized Heisenberg spin chains. The
combinatorial limit is obtained by dropping one of the parameters,
corresponding in the original system to the spectral parameter.

It is known \cite{Hernandez} that the solutions to the $T$-systems,
given appropriate boundary conditions, are the $q$-characters of
quantum affine algebras \cite{FrenkelResh}. These boundary conditions
are precisely the ones which ensure that the cluster variables have
the polynomiality property.

There is also a similar class of such systems which appears in
representation theory, as shown in \cite{GLS} (section 18.2). As for
the $T$-systems of simply-laced Lie algebras, this example also has a
``bipartite property''. These are mentioned in the second example
below. Interestingly, these systems also are presented in \cite{GLS}
with the precise boundary conditions which ensure polynomiality.

In general, $T$-systems for non-simply laced Lie algebras \cite{KunibaYsys}
are not bipartite, in the same way that $Q$-systems for non
simply-laced algebras are not, and have a more complicated structure
for the graph $\mathcal G_r$.  In order to keep the discussion clear,
we limit ourselves to bipartite systems in this Appendix.

\begin{defn}\label{GeneralizedT} {\bf Generalized $T$-systems.}
We consider the index set $I_r\times \Z$ and a matrix $A$ (the
``incidence matrix'') with rows and columns parametrized by this index
set, $A=(A_{\alpha,\beta}^{i,j})_{\alpha,\beta\in I_r}^{i,j\in\Z}$,
with entries in $\Z_+$. We define a generalized $T$-system with
coefficients to be the recursion relation of the form
\begin{equation}\label{usualT}
T_{\alpha,j;k+1}T_{\alpha,j;k-1} =
T_{\alpha,j+1;k}T_{\alpha,j-1;k}+ q_\alpha
\prod_{j'}\prod_{\beta\neq \al}
(T_{\beta,j';k})^{A_{\beta,\alpha}^{j',j}}, \quad \al\in I_r,\ \ j,k\in\Z.
\end{equation}
\end{defn}

This is a bipartite system, in the following sense.
We view
Equation \eqref{usualT} as an {\rm evolution in the direction of
$k$}. This means that (1) the evolution replaces $T_{\al,j; k-1}$ by
$T_{\al,j; k+1}$, preserving the parity of $k$ and (2) the right hand
side of \eqref{usualT} depends only on variables of the opposite
parity in $k$. The second property is the characteristic of bipartite
systems.

Such a property holds for $Q$-systems with a symmetric Cartan
matrix. We give two examples of $T$-systems with this property.  The
first example appeared in \cite{KR} and was generalized in
\cite{KunibaYsys}. It has generalizations to other Cartan matrices, which we
do not consider.
\begin{example}\label{LieT}
The $T$-systems satisfied by the $q$-characters of Kirillov-Reshetikhin
modules have the following form, in the simply-laced case:
$$
T_{\alpha,j;k+1}T_{\alpha;j,k-1} =
T_{\alpha,j+1;k}T_{\alpha,j-1;k} - \prod_{\beta\sim 
  \alpha} T_{\beta,j;k}^{[-C_{\beta,\alpha}]_+}, \quad \al\in I_r,
k,j\in Z,
$$ where $C$ is the (symmetric) Cartan matrix corresponding to a
simple, simply-laced Lie algebra $\g$. 
\end{example}
\begin{remark}
We have made the identification $T_{\alpha,j;k} = T_{\alpha,k}(u)$
(for rational $R$-matrices)
where $j=2u+c$ for some complex number $C$, where $T_{\alpha,k}(u)$ is
the transfer matrix corresponding to the Kirillov Reshetikhin module
with highest weight $k \omega_\alpha$ and spectral parameter $u$.
Similarly, for trigonometric $R$-matrices, $j$ is related to log of
the spectral parameter with a complex shift.
\end{remark}

The $q$-characters of the KR-modules of the quantum affine algebra of
$\g$, with highest $\g$-weight $k\omega_\alpha$, satisfy this
recursion relation subject to the boundary conditions
$T_{\alpha,j,0}=1$ and $T_{0,j;k}=T_{r+1,j;k}=1$ \cite{Hernandez}.

This example is of the form \eqref{usualT} if we make the identification
$q_\al=-1$ for all $\al\in I_r$ and
$A_{\alpha,\beta}^{ij}=\delta_{ij} [-C_{\alpha,\beta}]_+.$

\begin{example}\label{Glst}
Consider some arbitrary acyclic quiver with nodes labeled by $I_r$ and
 the corresponding quiver matrix $\Gamma=(\Gamma_{ij})_{i,j\in I_r}$, such that
 $\Gamma_{i,j}=k>0$ if there are $k$ arrows pointing from node $i$ to
 node $j$, and $\Gamma$ is a skew-symmetric matrix.  

To such a quiver
 there corresponds a recursion relation of the form \cite{GLS}
\begin{equation}\label{fsys}
f_{i,a-1,b}f_{i,a,b-1}=f_{i,a,b}f_{i,a-1,b-1}-\prod_{j:i\rightarrow j}
f_{j,a,b} \prod_{j:j\rightarrow i} f_{j,a-1,b-1}.
\end{equation}
Here, the product over $i:i\rightarrow j$ has $k$ factors if
$\Gamma_{ij}=k>0$, and so forth.

Defining $f_{i,a,b}=:T_{i,a+b+1;b-a}$, this recursion relation can be
written in the form of a $T$-system
$$
T_{\alpha,j;k+1} T_{\alpha,j;k-1} = T_{\alpha,j+1;k} T_{\alpha,j-1;k} -
\prod_{\beta}\left( T_{\beta,j+1;k}^{[\Gamma_{\alpha,\beta}]_+}
 T_{\beta,j-1;k}^{[\Gamma_{\beta,\alpha}]_+}\right).
$$ 
Define the matrix $A$ as follows:
$A_{\beta,\alpha}^{j+1,j}=[\Gamma_{\alpha,\beta}]_+$,
$A_{\beta,\alpha}^{j-1,j}=[\Gamma_{\beta,\alpha}]_+$, and all other
entries of $A$ vanishing. Then the last equation takes the form
\eqref{usualT}:
$$
T_{\alpha,j;k+1} T_{\alpha,j;k-1} = T_{\alpha,j+1;k} T_{\alpha,j-1;k}
-
\prod_{j'}\prod_{\beta}(T_{\beta,j';k})^{A_{\beta,\alpha}^{j',j}}.
$$
\end{example}
This last relation is obviously equation \eqref{usualT} specialized so
that all the coefficients $q_\al=-1$. We remark that the way in which
the identification of the indices was made above, there is a parity
restriction on the sum $j+k$. This is not essential to the discussion below,
as the two parities of $j+k$ actually decouple in the associated cluster algebra.

\subsection{Formuation of generalized $T$-systems as cluster
  algebras}
The bipartite $T$-systems of Equation \eqref{usualT} 
can be formulated as cluster algebras of infinite rank, if the matrix
$A$ satisfies some mild conditions (see the Lemma below). The model
for this description is the $Q$-system in the previous section for the
simply-laced case.

First, we specify a cluster variable (including coefficients) at some node
in the cluster graph,
which we label $k$, and an augmented exchange matrix $\widetilde{B}$,
which includes the submatrix $B$.
Equation \eqref{usualT} is an exchange relation
$\mu_{\alpha,j}: T_{\alpha,j;k-1}\mapsto T_{\alpha,j;k+1}$, and
the parity of $k$ is preserved under any mutation. Thus, the exchange
matrix $\widetilde{B}$ consists of two column-sets, each labeled by
$I_r\times \Z$, corresponding to even and odd variables ($k$ is fixed):
\begin{eqnarray*}
\mathbf T_{\rm even} &=& \{ T_{\alpha,j;2k} ~:~ \alpha\in I_r, j\in\Z\},\\
\mathbf T_{\rm odd} &=&\{ T_{\alpha,j;2k+1} ~:~ \alpha\in I_r, j\in\Z\}.
\end{eqnarray*}
We also have $r$ coefficients $\{q_\alpha: \alpha\in I_r\}$ which do
not mutate. At the node labeled $k$ in the cluster tree, the cluster
variable is 
\begin{equation}\label{clustervar}
\bx=(\mathbf T_{\rm even}, \mathbf T_{\rm
  odd};q_1,...,q_r).
\end{equation}

Let $A$ be a matrix as in Definition \ref{GeneralizedT} and let $P$ be
the matrix on the same index set $I_r\times\Z$ defined as
\begin{equation}\label{Pmatrix}
P_{\alpha,\beta}^{i,j} = \delta_{\alpha,\beta}(\delta_{i,j+1}+\delta_{i,j-1}).
\end{equation}
Define $C=P-A$, then the exchange matrix $B$ on the ``doubled'' index
set is 
\begin{equation}\label{Bmat}
B=\begin{pmatrix}0 & -C^t \\ C & 0 \end{pmatrix}
\end{equation}
at the node $k$.

In order to distinguish between the index sets for the ``even'' and
``odd'' variables, we will refer to the first as $(\alpha,j)$ and the
latter as $(\overline{\alpha},j)$ in analogy with the formulation of the
$Q$-system. Even mutations act according to the first set of columns,
and odd mutations act via the second half.

The augmented matrix $\widetilde{B}$ is defined by adding $r$ rows to
$B$, which we label simply by the indices $q_1,...,q_r$. For example, the entry
$B_{q_\alpha, (\beta,j)}$ corresponds to the row labeled by
$q_\alpha$ and column $(\beta,j)$ of an even variable, whereas 
$B_{q_\alpha, (\overline{\beta},j)}$ is in the column
corresponding to the odd variable $(\overline{\beta},j)$.
Thus, at the
node $k$,
\begin{equation}\label{Bext}
\widetilde{B}_{q_\alpha,(\beta,j)}=
-\delta_{\alpha,\beta}=-\widetilde{B}_{q_\alpha,(\overline{\beta},j)}. 
\end{equation}

We can write the $T$-system \eqref{usualT} as
$$ T_{\alpha,j;k+1} T_{\alpha,j;k-1} =
\prod_{j'}T_{\alpha,j';k}^{[C_{\alpha,\alpha}^{j',j}]_+}+ q_\alpha
\prod_{\alpha,j'} T_{\beta,j';k}^{[-C_{\beta,\alpha}^{j',j}]_+}.
$$ This is identical to the exchange relation
$\mu_{{\alpha},j}:T_{\alpha,j;k-1}\mapsto T_{\alpha;j,k+1}$
given by the 
matrix $\widetilde{B}$ above when $k$ is odd, and
$\mu_{\overline{\alpha},j}:T_{\alpha,j;k+1}\mapsto T_{\alpha,j;k-1}$ if $k$ is
even.

The mutations
$\mu_{\alpha,j}$ commute with each other for different
$(\alpha,j)$. Define the node $k'$ to be the node reached from $k$ via
the compound mutation
$$\mu_{\rm even}=\prod_{\al\in I_r,j\in\Z}\mu_{\al,j}.$$ 
The cluster variable at this
node is $\bx' = (\mathbf T_{\rm even}', \mathbf T_{\rm odd},
q_1,...,q_r)$, where $\mathbf T_{\rm even}'=\{T_{\al,j,2k+2}: \al\in
I_r,j\in \Z\}$, and the
rest of the entries are as in $\bx$ at node $k$.

\begin{defn}
The ``bipartite'' subgraph $\mathcal G_r$ of the full tree associated
with the cluster algebra is the following: It is the subgraph
containing the node labeled $k\in \Z$, as well as all nodes obtained
from it by mutations along all distinct even edges, or alternatively
all distinct odd edges.

The node $k'$ in $\mathcal G_r$ (respectively $(k-1)'$) is the node
reached from node $k$ by the sequence of all even (respectively odd)
mutations.  The node $k+1$ in $\mathcal G_r$ is the node reached from
$k'$ by the sequence of all odd mutations. The graph is thus extended
for all $k\in\Z$.
\end{defn}

The cluster algebra structure holds iff we require that the exchange
relation given by the matrix $\widetilde{B}'$ at the node $k'$ be
consistent with the evolution $\mu_{\overline{\al},j}:
T_{{\al},j;2k+1}\mapsto T_{{\al},j;2k+3}$ given by equation
\eqref{usualT}.  The mutations are consistent with the recursion
\eqref{usualT} if and only if
$\widetilde{B}'=-\widetilde{B}$. This condition is satisfied if the
three conditions of the following lemma hold.

\begin{lemma}
The mutations along $\mathcal G_r$ of the cluster algebra defined by
 the seed $(\bx,\widetilde{B})$ at node $k$, with $\bx$ as in Equation
 \eqref{clustervar} and $\widetilde{B}$ is as above,
 restricted to the edges of the subgraph $\mathcal G_r$, are each
 described by one of the recursion relations \eqref{usualT}, with
the matrix $C=P-A$, where $P$ is given by Equation
 \eqref{Pmatrix} and $A$ is a matrix with non-negative entries,
 provided that:
\begin{enumerate}
\item The matrices $A$ and $P$ satisfy the condition $P A^T - A
  P=0$. (Given condition 3, this implies that $A$ and $P$ commute.)
\item The matrix $P$ is such that $\sum_k
  P_{\alpha\beta}^{kj}=2\delta_{\alpha,\beta}$. 
\item The matrix $A$ is symmetric.
\end{enumerate}
\end{lemma}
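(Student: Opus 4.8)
The plan is to verify two claims. (i) The exchange relations attached to the augmented matrix $\widetilde{B}$ along the even (resp.\ odd) columns at the node $k$ are literally the relations \eqref{usualT}. (ii) The compound mutation $\mu_{\rm even}$ carries $\widetilde{B}$ at $k$ to $-\widetilde{B}$ at $k'$, and symmetrically $\mu_{\rm odd}$ carries $-\widetilde{B}$ at $k'$ to $\widetilde{B}$ at $k+1$; this is the consistency condition that makes $\mathcal G_r$ a cluster subgraph on which every mutation is one of the relations \eqref{usualT}, and it simultaneously displays the period-two structure of $\mathcal G_r$. Throughout I regard $P$, $A$, $C=P-A$ and $B$ as row- and column-finite matrices on $I_r\times\Z$ (as in both Examples \ref{LieT} and \ref{Glst}), so that the infinite compound mutations are defined entrywise; and I use that the positive and negative parts of $C$ are $[C]_+=P$ and $[-C]_+=A$, which holds because one may take the diagonal blocks $A_{\alpha,\alpha}$ to vanish (they are absent from \eqref{usualT} by the restriction $\beta\neq\alpha$), while $P$ is supported only on diagonal blocks with $|i-l|=1$ by \eqref{Pmatrix}, so the supports of $P$ and $A$ are disjoint.

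For (i), expand the exchange $\mu_{\overline\alpha,j}$ along the odd column $(\overline\alpha,j)$: the even--odd block of $B$ is $-C^{T}$, the odd--odd block vanishes, and $\widetilde{B}_{q_\beta,(\overline\alpha,j)}=\delta_{\alpha\beta}$ by \eqref{Bext}. Hence one monomial of $\mu_{\overline\alpha,j}(T_{\alpha,j;k-1})$ is $\prod_{l}T_{\alpha,l;k}^{\,[C_{(\alpha,j),(\alpha,l)}]_+}=T_{\alpha,j+1;k}\,T_{\alpha,j-1;k}$ by \eqref{Pmatrix}, and the other is $q_\alpha\prod_{\beta,l}T_{\beta,l;k}^{\,[-C_{(\alpha,j),(\beta,l)}]_+}=q_\alpha\prod_{\beta,l}T_{\beta,l;k}^{\,A_{(\alpha,j),(\beta,l)}}$. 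This coincides with the right-hand side of \eqref{usualT} precisely when $A_{(\alpha,j),(\beta,l)}=A_{\beta,\alpha}^{l,j}$, i.e.\ when $A$ is symmetric; this is the role of hypothesis (3). The even--column exchanges at $k$ are the mirror image, obtained by interchanging the two column sets, and reproduce the evolution $T_{\alpha,j;2k}\mapsto T_{\alpha,j;2k+2}$.

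For (ii), the even mutations commute because the even--even block of $B$ is zero, so $\mu_{\rm even}=\prod_{\alpha,j}\mu_{\alpha,j}$ is well defined and the compound-mutation rule \eqref{commutingcompound} applies. Since that even--even block stays zero under every even mutation, the rule shows that each entry of $\widetilde{B}$ lying in an even row or an even column---including the coefficient entries $\widetilde{B}_{q_\alpha,(\beta,l)}$---acquires exactly one sign change, so those blocks already agree with $-\widetilde{B}$. The only block needing a genuine cancellation is the odd--odd one; using the elementary identity $\sgn(x)[-xy]_+=[x]_+[-y]_+-[-x]_+[y]_+$ together with $[C]_+=P$ and $[-C]_+=A$,
\begin{align*}
\mu_{\rm even}(\widetilde{B})_{(\overline\alpha,i),(\overline\beta,j)}
&=\sum_{(\gamma,l)}\bigl(P_{(\alpha,i),(\gamma,l)}A_{(\beta,j),(\gamma,l)}-A_{(\alpha,i),(\gamma,l)}P_{(\beta,j),(\gamma,l)}\bigr)\\
&=(PA^{T}-AP^{T})_{(\alpha,i),(\beta,j)}=(PA^{T}-AP)_{(\alpha,i),(\beta,j)},
\end{align*}
the last equality using that $P$ is symmetric by \eqref{Pmatrix}; this vanishes by hypothesis (1). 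Finally, $\mu_{\rm even}$ sends the coefficient entry $\widetilde{B}_{q_\alpha,(\overline\beta,j)}=\delta_{\alpha\beta}$ to $\delta_{\alpha\beta}-\sum_{l}[C_{(\beta,j),(\alpha,l)}]_+=\delta_{\alpha\beta}-\sum_{l}P_{(\beta,j),(\alpha,l)}$, which equals $-\delta_{\alpha\beta}$ exactly because $\sum_{l}P_{(\beta,j),(\alpha,l)}=2\delta_{\alpha\beta}$---this is hypothesis (2) (together with the symmetry of $P$). Hence $\widetilde{B}'=-\widetilde{B}$ at $k'$; running the same computation with $B$ replaced by $-B$ and the two column sets swapped shows the odd exchanges at $k'$ are again of the form \eqref{usualT} (now evolving $T_{\alpha,j;2k+1}\mapsto T_{\alpha,j;2k+3}$) and that $\mu_{\rm odd}(-\widetilde{B})=\widetilde{B}$, so the target node is $k+1$ and $\mathcal G_r$ has period two.

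I expect the main obstacle to be exactly the odd--odd cancellation: the point is to recognize that a full round of commuting mutations produces the commutator $PA^{T}-AP$ of the incidence matrix $A$ with the shift matrix $P$ of \eqref{Pmatrix}, so that the three hypotheses are precisely tailored to the task---hypothesis (3) forces $A=A^{T}$ so the exchange relation is \eqref{usualT} and not its transpose, hypothesis (1) kills the quadratic terms in the return mutation, and hypothesis (2) keeps each coefficient $q_\alpha$ attached with the correct sign when one passes from $k$ to $k+1$. Everything else---commutativity of same-parity mutations, the sign reversal of the mixed blocks, and the ``$+$'' versus ``$-$'' slot for $q_\alpha$ along the direction of increasing $k$---is routine bookkeeping, carried out just as in the $Q$-system-with-coefficients case treated above.
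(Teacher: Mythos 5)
Your proof is correct and follows essentially the same route as the paper's: you reduce everything to the requirement $\widetilde{B}'=-\widetilde{B}$ under $\mu_{\rm even}$, derive condition (1) from the vanishing of the odd--odd block via the commutator $PA^{T}-AP$, condition (2) from the coefficient rows, and condition (3) from the transpose mismatch between the even- and odd-direction exchange exponents. Your write-up is somewhat more explicit than the paper's (e.g.\ the sign identity $\sgn(x)[-xy]_+=[x]_+[-y]_+-[-x]_+[y]_+$ and the verification that the mixed blocks flip sign exactly once), but the substance is identical.
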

\begin{proof}
These three conditions are necessary and sufficient to ensure that 
$\widetilde{B}'=-\widetilde{B}$. 

Under the compound mutation $\mu_{\rm even}$, the even rows and
columns change sign. 
We are left with two other blocks of $\widetilde{B}'$ whose mutation needs to
be checked: The odd columns-odd rows, and the odd columns and the
coefficient rows.

Condition (1) comes from requiring The first condition comes from
requiring that $(B')_{\overline{\alpha},\overline{\beta}}^{jl} =
0$. To see this, consider
\begin{eqnarray*}
(B')_{\overline{\alpha},\overline{\beta}}^{jl} &=& 0 +
    \sum_{\gamma,k}{\rm sign}(B_{\overline{\alpha},\gamma}^{jk} )
    [B_{\overline{\alpha},\gamma}^{jk} B_{\gamma,\overline{\beta}}^{kl}]_+ \\
    &=& \sum_{\gamma,k}{\rm \sign}(
    C_{\alpha,\gamma})[-C_{\alpha,\gamma}^{jk} C_{\beta,\gamma}^{lk}]_+
    \\ &=& (P A^t - A P)_{\alpha\beta}^{jl}
\end{eqnarray*}

Condition (2) is the result of requiring that
$(\widetilde{B}')_{q_\alpha,(\overline{\beta},i)}=
-\widetilde{B}_{q_\alpha,(\overline{\beta},i)}$. 
It is easiest to illustrate this graphically (see below), but also,
\begin{eqnarray*}
(\widetilde{B}')_{q_\alpha,(\overline{\beta},i)} &=& \delta_{\alpha,\beta} +
\sum_{\gamma,k} {\rm
  sign}(B_{q_\alpha,(\gamma,k)})[B_{q_\alpha,(\gamma,k)}
  B_{\gamma,\overline{\beta}}^{ki}]_+ \\
&=&  \delta_{\alpha,\beta} -\sum_{\gamma,k}[\delta_{\alpha,\gamma}
  C_{\beta,\alpha}^{ik}]_+ \\
&=& \delta_{\alpha,\beta} - \sum_k P_{\beta,\alpha}^{ik}.
\end{eqnarray*}
Requiring that
$(\widetilde{B}')_{q_\al,(\overline{\beta},i)}=-\delta_{\al,\beta}$
gives condition (2) of the Lemma.

Condition (3) appears because we require the evolution
$\mu_{\overline{\alpha},j}$ to have the same form at $k'$ as the
evolution $\mu_{\alpha,j}$ at the node $k$. That is, the evolution of
even and odd variables is the same, as the matrix $A$ in equation
\eqref{usualT} does not depend on $k$. Therefore, since the upper
right block of $\widetilde{B}'$ is equal to $C^t=P-A^t$, we find that
$A=A^t$. 
\end{proof}

We illustrate the last statement of the proof graphically in terms of
the quiver graph corresponding to $\widetilde{B}$. In fact, we only
need to consider the cluster variables in $\bx$ which are connected to
$q_\alpha$. These are $\{T_{\alpha,j;2k},T_{\alpha,j;2k+1}: j\in
\Z\}$, for some fixed $k$ and $\alpha$.
(In the system of \cite{GLS} there is a
restriction on the parity of $j+k$ but this is not a necessary
assumption for our discussion here, as the two parities decouple). 

Thus, we need only consider a ``slice'' of the quiver graph with
constant $\alpha$. The quiver graph of this slice is an infinite
double strip and one extra node, has the following arrows at
the node $k$ (to keep the picture readable, we omit the arrows between
the empty circles and 
$q_\alpha$ as they are clearly decoupled from what happens to the
quiver with solid circles. They are connected in exactly the same way
as the solid circles):
\begin{center}
\psset{unit=1.5mm,linewidth=.3mm,dimen=middle}
\begin{pspicture}(-20,-10)(70,30)
\multips(10,0)(20,0){3}{\rput(0,0){\pscircle*(0,0){1}}}
\multips(0,0)(20,0){3}{\rput(0,0){\pscircle(0,0){1}}}
\multips(0,10)(20,0){3}{\rput(0,0){\pscircle*(0,0){1}}}
\multips(10,10)(20,0){3}{\rput(0,0){\pscircle[border=1pt](0,0){1}}}
\rput(10,-5){$j$}
\rput(0,-5){$j-1$}
\rput(20,-5){$j+1$}
\rput(30,-5){$j+2$}
\rput(40,-5){$\cdots$}
\multips(55,0)(2,0){3}{\rput(0,0){\pscircle*(0,0){0.3}}}
\multips(55,10)(2,0){3}{\rput(0,0){\pscircle*(0,0){0.3}}}%
\multips(-8,0)(2,0){3}{\rput(0,0){\pscircle*(0,0){0.3}}}
\multips(-8,10)(2,0){3}{\rput(0,0){\pscircle*(0,0){0.3}}}
\rput(-15,0){$2k+1$}
\rput(-15,10){$2k$}
\multips(0,0)(20,0){3}{\psline[linestyle=dotted,arrowsize=3pt
    4]{->}(1,1)(9,9)}
\multips(0,0)(20,0){2}{\psline[linestyle=dotted,arrowsize=3pt 4]{->}(19,1)(11,9)}
\multips(0,0)(20,0){3}{\psline[border=1pt,arrowsize=3pt 4]{->}(9,1)(1,9)}
\multips(0,0)(20,0){2}{\psline[border=1pt,arrowsize=3pt 4]{->}(11,1)(19,9)}
\rput(30,25){\pscircle*[linecolor=red](0,0){1}}
\psline[border=1pt,arrowsize=3pt 4]{<-}(10,1)(28,22)
\psline[border=1pt,arrowsize=3pt 4]{<-}(30,1)(30.5,23)
\psline[border=1pt,arrowsize=3pt 4]{<-}(50,1)(33,22)
\psline[border=1pt,arrowsize=3pt 4]{->}(1,11)(29,24)
\psline[border=1pt,arrowsize=3pt 4]{->}(21,11)(30,24)
\psline[border=1pt,arrowsize=3pt 4]{->}(39,11)(31,24)
\rput(30,28){$q_\alpha$}
\end{pspicture}\end{center}
Note that the sum of the vertical and horizontal coordinates has a
different parity for solid and empty circles. In the slice of constant
$\alpha$ they are not connected by any arrows.

Now suppose we act with $\prod_{j} \mu_{\alpha,j}$ on this
quiver. Then all the arrows reverse. This is seen from the following
sequence of mutations on a piece of the quiver:
\begin{center}
\psset{unit=1.3mm,linewidth=.3mm,dimen=middle}
\begin{pspicture}(0,-5)(70,80)
\rput(0,50){\rput(10,0){\pscircle*(0,0){1}}
\rput(10,-3){$j$}
\rput(0,10){\pscircle*(0,0){1}}
\rput(0,-3){$j-1$}
\rput(20,-3){$j+1$}
\rput(20,10){\pscircle*(0,0){1}}
\rput(10,25){\pscircle*[linecolor=red](0,0){1}}
\rput(13,25){$q_\alpha$}
\psline[arrowsize=3pt 4]{->}(9.5,1)(0.5,9)
\psline[arrowsize=3pt 4]{->}(10.5,1)(19.5,9)
\psline[arrowsize=3pt 4]{->}(1,11)(9,24)
\psline[arrowsize=3pt 4]{->}(19,11)(11,24)
\psline[arrowsize=3pt 4]{->}(10,24)(10,1)
\psline[arrowsize=3pt 4]{->}(25,10)(45,10)}
\rput(35,63){$\mu_{\alpha,j-1}\mu_{\alpha,j+1}$}
\rput(50,50){\rput(10,0){\pscircle*(0,0){1}}
\rput(0,10){\pscircle*(0,0){1}}
\rput(20,10){\pscircle*(0,0){1}}
\rput(10,25){\pscircle*[linecolor=red](0,0){1}}
\psline[arrowsize=3pt 4]{<-}(9.5,1)(0.5,9)
\psline[arrowsize=3pt 4]{->}(10.5,1)(19.5,9)
\psline[arrowsize=3pt 4]{<-}(1,11)(9,24)
\psline[arrowsize=3pt 4]{->}(19,11)(11,24)}
\psline[arrowsize=3pt 4]{->}(57,47)(37,37)
\rput(47,45){$\mu_{\alpha,j}$}
\rput(20,10){\rput(10,0){\pscircle*(0,0){1}}
\rput(0,10){\pscircle*(0,0){1}}
\rput(20,10){\pscircle*(0,0){1}}
\rput(10,25){\pscircle*[linecolor=red](0,0){1}}
\psline[arrowsize=3pt 4]{<-}(9.5,1)(0.5,9)
\psline[arrowsize=3pt 4]{<-}(10.5,1)(19.5,9)
\psline[arrowsize=3pt 4]{<-}(1,11)(9,24)
\psline[arrowsize=3pt 4]{<-}(19,11)(11,24)
\psline[arrowsize=3pt 4]{<-}(10,24)(10,1)}
\end{pspicture}\end{center}
All the arrows are reversed in the last picture. This shows that these
entries of the matrix $\widetilde{B}'$ indeed have the opposite sign.

We conclude that generalized $T$-systems of the form \eqref{usualT} are
subsets of cluster algebras, so we can apply the Lemmas of Section 4:
\begin{lemma}
In the case where there is a boundary condition on the recursion relation
\eqref{usualT} such that $T_{\alpha, j;k_0-1}=0$ for all $j$ and
$\alpha$, with some fixed $k_0$, then all cluster 
variables are polynomials in the variables $\{T_{\alpha,j;k_0}:
\alpha\in I_r, j\in \Z\}$.
\end{lemma}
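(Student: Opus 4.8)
The plan is to reduce this statement to the Corollary of Section 4 by verifying that the generalized $T$-system, viewed as a cluster algebra via the seed $(\bx,\widetilde B)$ constructed above, has exactly the block structure required there. First I would recall the setup: at the node we call $k_0$ (taking this as the "origin" node $0$ of the abstract lemma), the cluster variables split into the even collection $\mathbf T_{\rm even}=\{T_{\alpha,j;k_0}\}$, the odd collection $\mathbf T_{\rm odd}=\{T_{\alpha,j;k_0\pm1}\}$, and the frozen coefficients $q_1,\dots,q_r$. In the notation of the abstract lemma of Section 4, I would set $\ba = \mathbf T_{\rm odd}$ — wait, more carefully: the variables that must become "$\ba$" are precisely those whose one-step mutation is forced to vanish at the boundary point, i.e.\ the variables $T_{\alpha,j;k_0}$, since the hypothesis $T_{\alpha,j;k_0-1}=0$ together with the exchange relation \eqref{usualT} says that $\mu_{\alpha,j}(T_{\alpha,j;k_0})\,T_{\alpha,j;k_0-1}$ equals the right-hand side which involves only odd variables. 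Hmm — I need $\mu_i(a_i)$ itself to vanish, not $a_i\cdot\mu_i(a_i)$. So the correct identification is: $\ba$ is the set $\{T_{\alpha,j;k_0-1}\}$ and $\bb$ is the set $\{T_{\alpha,j;k_0}\}$ together with the frozen $q$'s; then $\mu_{\alpha,j}(T_{\alpha,j;k_0-1}) = T_{\alpha,j;k_0-1}^{-1}\big(T_{\alpha,j+1;k_0}T_{\alpha,j-1;k_0}+q_\alpha\prod(\cdots)\big)$, and this numerator $N_{\alpha,j}(\bb)$ depends only on the even variables and the $q$'s. The boundary point is the locus where all these numerators vanish, i.e.\ the point where $T_{\alpha,j;k_0-1}=0$ is consistent.

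Second, I would check the structural hypothesis of the Section 4 lemma, namely that $B_{ij}=0$ whenever $i,j$ both lie in the index set for $\ba$. In our coordinates, the "$\ba$-block" is the set of odd columns/rows of $B$ (or of $\widetilde B$ after relabeling so that $\ba$ sits in the first slots). From the block form $B=\begin{pmatrix}0 & -C^t\\ C& 0\end{pmatrix}$ of \eqref{Bmat}, the odd–odd block is the lower-right $0$ block, so indeed $B_{(\overline\alpha,j),(\overline\beta,l)}=0$ for all $\alpha,\beta,j,l$; and the coefficient rows $q_\alpha$ connect only to even and odd single variables, never producing an odd–odd interaction. Hence the required block-triangularity holds verbatim. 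This is the only place where the specific form \eqref{Bmat} enters; everything else is formal.

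Third, with that verified, I would invoke the Corollary of Section 4 (the "strong Laurent"/polynomiality statement): at the evaluation of the $\bb$-parameters to the point where every $N_{\alpha,j}(\bb)=0$, every cluster variable of the ambient tree $\mathbb T$ is a polynomial in the $\ba$-variables $\{T_{\alpha,j;k_0-1}\}$. I then need to translate this back: the statement I want is polynomiality in $\{T_{\alpha,j;k_0}\}$, not in $\{T_{\alpha,j;k_0-1}\}$. This is handled by symmetry of the bipartite graph — the roles of the even and odd sublattices of $k$ are interchangeable. Concretely, one performs the same argument at the shifted origin node $k_0-1$, or equivalently one notes that the exchange matrix at the "half-shifted" node is conjugate to $-B$ under the swap of even and odd index sets (exactly as in the half-translation remark of Section \ref{discussion}), so the two collections $\mathbf T_{\rm even}$ and $\mathbf T_{\rm odd}$ play symmetric roles; applying the Corollary with $\ba=\{T_{\alpha,j;k_0}\}$ and $\bb\supseteq\{T_{\alpha,j;k_0-1}\}$ gives polynomiality in $\{T_{\alpha,j;k_0}\}$ once one observes that the boundary condition $T_{\alpha,j;k_0-1}=0$ is precisely the vanishing of the relevant numerators $N_{\alpha,j}$ in these shifted coordinates.

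The main obstacle, and the only genuinely non-formal point, is bookkeeping of \emph{infinite rank}: the index set $I_r\times\Z$ is infinite, so $\widetilde B$ is an infinite matrix and the "cluster tree" is infinite-dimensional. I would need to remark that this causes no trouble because each individual cluster variable is obtained by a \emph{finite} sequence of mutations, hence lies in a finite-rank sub-cluster-algebra to which the Section 4 argument (and the Laurent phenomenon \cite{FZlaurent}) applies directly; the divisibility conclusion $C_\bn(\bb) \mid \prod_{j:n_j<0}N_j(\bb)^{-n_j}$ involves only the finitely many $N_j$ appearing in that finite sub-computation, so specialization at the boundary point kills exactly the terms with poles in the $\ba$-variables. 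Apart from that caveat, the proof is a direct citation of the Corollary of Section 4 after the identification of $\ba$, $\bb$, and the boundary point described above, together with the even/odd symmetry of the bipartite graph $\mathcal G_r$.
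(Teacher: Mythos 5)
Your overall strategy --- realize the generalized $T$-system as the cluster algebra of Appendix B and then quote the Corollary of Section 4 --- is exactly the paper's (its ``proof'' is literally the sentence preceding the Lemma), and your observations about the vanishing same-parity block of $B$ and about restricting to finite sub-cluster-algebras to handle the infinite rank are correct and worth making explicit. The gap is in your identification of $\ba$, $\bb$ and the evaluation point. In your final setup you take $\bb\supseteq\{T_{\al,j;k_0-1}\}$ and declare the special point to be the locus where these cluster variables themselves vanish. That is not the situation of Section 4: there the special point is a zero of the exchange numerators $N_i(\bb)$ at which the $\bb$'s are \emph{nonzero} (the paper stresses that there is ``no singularity since at the KR point $\tQ_{\al,0}\neq 0$''), because a general cluster variable has the form $\bb^{-\bm}P(\ba,\bb)$ and the divisibility argument only kills the poles in $\ba$, not the prefactor $\bb^{-\bm}$. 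At your point the argument does not apply, and in fact the conclusion fails there: $T_{\al,j;k_0+1}=N'(\{T_{\cdot,\cdot;k_0}\})/T_{\al,j;k_0-1}$ genuinely diverges when $T_{\al,j;k_0-1}=0$ with the level-$k_0$ variables left free. (Your first attempt, $\ba=\{T_{\al,j;k_0-1}\}$, is worse still, since then the ``free'' variables in which you want polynomiality are identically zero.)

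The correct transcription of Section 4 takes the origin node to carry the levels $k_0$ and $k_0+1$, sets $\ba=\{T_{\al,j;k_0+1}\}$ and $\bb=\{T_{\al,j;k_0}\}\cup\{q_\al\}$, and evaluates $\bb$ on the locus where every numerator $N_{\al,j}(\bb)=T_{\al,j+1;k_0}T_{\al,j-1;k_0}+q_\al\prod(\cdots)$ vanishes --- which is precisely the condition $T_{\al,j;k_0-1}=0$ and is attained at points where the $T_{\al,j;k_0}$ are nonzero (e.g.\ all equal to $1$ in the $q$-character example). The Corollary then yields polynomiality in $\{T_{\al,j;k_0+1}\}$. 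Note that this is shifted by one from the Lemma as printed; the shift is already visible in the $Q$-system prototype ($Q_{\al,-1}=0$, $Q_{\al,0}=1$, polynomiality in $Q_{\al,1}$) and in the $q$-character example ($T_{\al,j;-1}=0$, $T_{\al,j;0}=1$, polynomiality in the fundamental characters $T_{\al,j;1}$), so the conclusion one actually obtains concerns the level-$(k_0+1)$ variables. Your even/odd ``half-translation'' symmetry cannot repair this: it merely relabels which level plays the role of $\ba$, and does not change the requirement that the specialization be a non-singular point of the Laurent expansion in $\bb$.
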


It turns out that in both the examples given in this appendix, such a
boundary condition holds with $k_0=0$. This gives a simple explanation
of the phenomenon of polynomiality of the solutions of the recursion
found in \cite{GLS}. 

\end{appendix}

\def\cprime{$'$} \def\cprime{$'$} \def\cprime{$'$} \def\cprime{$'$}
  \def\cprime{$'$} \def\cprime{$'$} \def\cprime{$'$} \def\cprime{$'$}
  \def\cprime{$'$} \def\cprime{$'$}
\providecommand{\bysame}{\leavevmode\hbox to3em{\hrulefill}\thinspace}
\providecommand{\MR}{\relax\ifhmode\unskip\space\fi MR }
\providecommand{\MRhref}[2]{%
  \href{http://www.ams.org/mathscinet-getitem?mr=#1}{#2}
}
\providecommand{\href}[2]{#2}


\end{document}